\documentclass[a4paper,11pt]{amsart}
\usepackage{amsmath, amscd, amssymb, mathrsfs, mathtools, fullpage, enumerate, svninfo}
\usepackage[all]{xy}


\title{On $p$-adic interpolation of motivic Eisenstein classes}

\author{Guido Kings}
\address{Fakult\"at f\"ur Mathematik \\
Universit\"at Regensburg\\
93040 Regensburg\\
Germany}

\thanks{This research was supported by the DFG grant: SFB 1085 ``Higher invariants''}

\theoremstyle{plain}
    \newtheorem{theorem}{Theorem}[subsection]
\newtheorem*{theorem*}{Main Theorem}
    \newtheorem{lemma}[theorem]{Lemma}
    \newtheorem{proposition}[theorem]{Proposition}
    \newtheorem{corollary}[theorem]{Corollary}
    \newtheorem{definition}[theorem]{Definition}

\theoremstyle{remark}
    \newtheorem{remark}[theorem]{Remark}

\DeclareMathOperator{\TSym}{TSym}
\DeclareMathOperator{\Sym}{Sym}
\DeclareMathOperator{\Hom}{Hom}
\DeclareMathOperator{\Ext}{Ext}
\DeclareMathOperator{\Spec}{Spec}

\DeclareMathOperator{\pr}{pr}
\DeclareMathOperator{\Tr}{Tr}

\DeclareMathOperator{\mom}{mom}

\DeclareMathOperator{\Map}{Map}
\DeclareMathOperator{\rk}{rk}
\DeclareMathOperator{\EI}{\mathcal{EI}}

\newcommand{\one}[1]{\mathbf{1}^{(#1)}}
\newcommand{\bfone}{\mathbf{1}}

\newcommand{\et}{\text{\textup{\'et}}}
\newcommand{\mot}{\mathrm{mot}}

\newcommand{\res}{\mathrm{res}}

\newcommand{\sF}{\mathscr{F}}
\newcommand{\sG}{\mathscr{G}}
\newcommand{\sH}{\mathscr{H}}

\newcommand{\sS}{\mathscr{S}}

\newcommand{\sY}{\mathscr{Y}}
\newcommand{\sX}{\mathscr{X}}

\newcommand{\cA}{\mathcal{A}}
\newcommand{\cB}{\mathcal{B}}

\newcommand{\cL}{\mathcal{L}}

\newcommand{\QQ}{\mathbb{Q}}

\newcommand{\ZZ}{\mathbb{Z}}
\newcommand{\DD}{\mathbf{D}}

\newcommand{\Qp}{{\QQ_p}}
\newcommand{\Zp}{{\ZZ_p}}

\newcommand{\Eis}{\mathrm{Eis}}

\newcommand{\prolim}{\varprojlim}

\newcommand{\isom}{\cong}

\newcommand{\id}{\mathrm{id}}

\newcommand{\Log}[1]{\mathcal{L}og^{(#1)}}

\newcommand{\cpol}{\mathrm{pol}}

\newcommand{\cLog}{\mathcal{L}og}

\newcommand{\comp}{\operatorname{comp}}

\numberwithin{equation}{subsection}
\setcounter{tocdepth}{1}
\begin{document}

\begin{abstract}
In this paper we prove that the motivic Eisenstein classes associated to polylogarithms of commutative group schemes can be  $p$-adically  interpolated in \'etale cohomology. This connects them to Iwasawa theory and generalizes and strengthens the results for  elliptic curves obtained in our former work. In particular, degeneration questions can be treated  easily. 
\end{abstract}

\maketitle
\begin{center}
To John Coates, on the occasion of his 70th birthday
\end{center}
\tableofcontents
\section*{Introduction}
In this paper we prove that the motivic Eisenstein classes associated to polylogarithms of commutative group schemes can be  $p$-adically  interpolated in \'etale cohomology. This generalizes the results for  elliptic curves obtained in our former paper \cite{Kings-Soule}. Already in the one dimensional case the results obtained here are stronger and much more flexible as they  allow to  treat degenerating elliptic curves easily. 

The interpolation of  motivic Eisenstein classes connects them with Iwasawa theory and is essential for many applications. In the elliptic case for example, the interpolation was used in  \cite{Kings-Tamagawa} to prove a case of the Tamagawa number conjecture for CM elliptic curves and it was one of the essential ingredients in the proof of an explicit reciprocity law for Rankin-convolutions in \cite{KLZ}.
We hope that the general case will find similar applications.

Before we explain our results, we have to introduce
the motivic Eisenstein classes (for the construction we refer to Section \ref{sec:Qp-Eisenstein-classes}).

Let $\pi:G\to S$ be a smooth commutative and connected group scheme of relative dimension $d$ (for example a semi-abelian scheme) and  denote by  
\[
\sH:=R^{1}\pi_!\Zp(1)
\]
the first \'etale homology of $G/S$, which is just the sheaf of relative $p$-adic Tate modules of $G/S$. We write  $\sH_\Qp$ for the
associated $\Qp$-adic sheaf. Note that this is not a lisse sheaf in general. Evaluating the motivic polylogarithm 
at  a non-zero $N$-torsion section $t:S\to G$ one defines motivic 
Eisenstein classes 
\[
_\alpha\Eis_\mot^{k}(t)\in H^{2d-1}_\mot(S, \Sym^{k}\sH_\QQ(d)),
\]
depending on some auxiliary data $\alpha$, 
where $\Sym^{k}\sH_\QQ(1)$ is the $k$-th symmetric tensor power of the
motivic sheaf $\sH_\QQ$ which underlies $\sH_\Qp$. 

In the case of an elliptic curve, the  de Rham realization of $_\alpha\Eis_\mot^{k}(t)$ is the cohomology class of a  holomorphic Eisenstein series, which justifies the name. These motivic Eisenstein classes in the elliptic case play a major role in Beilinson's proof of his conjectures on special values of $L$-functions for modular forms. 

In this paper we consider the \'etale regulator 
\[
r_\et: H^{2d-1}_\mot(S, \Sym^{k}\sH_\QQ(d))\to H^{2d-1}(S, \Sym^{k}\sH_\Qp(d))
\]
which gives rise to the \'etale Eisenstein classes 
\[
_\alpha\Eis_\Qp^{k}(t):=r_\et(\Eis_\mot^{k}(t))\in H^{2d-1}(S, \Sym^{k}\sH_\Qp(d)).
\]
In the elliptic case these classes were used by Kato in his seminal work to construct Euler systems for modular forms.

It is a natural question, whether these \'etale Eisenstein classes enjoy some $p$-adic interpolation properties, in a similar way as one 
can $p$-adically interpolate the holomorphic Eisenstein series. At first sight, this seems to be a completely unreasonable question, as for varying $k$ the different
motivic cohomology groups $H^{2d-1}_\mot(S, \Sym^{k}\sH_\QQ(1))$ are not related at all. Nevertheless, this question was answered affirmatively in the elliptic case in \cite{Kings-Soule} and in this paper we will generalize this result to commutative group schemes.

To explain our answer to this question we need the 
\emph{sheaf of Iwasawa-algebras} $\Lambda(\sH)$, which is defined as follows: One first defines  a 
sheaf of "group rings" $\ZZ/p^{r}\ZZ[\sH_r]$ on $S$, where $\sH_r$ is the \'etale sheaf associated to the $[p^{r}]$-torsion subgroup 
$G[p^{r}]$ or alternatively the first homology with $\ZZ/p^{r}\ZZ$-coefficients (see section \ref{sec:iwasawa} for more details). These
group rings form an inverse system for varying $r$ and hence define  a pro-sheaf 
\[
\Lambda(\sH):=(\ZZ/p^{r}\ZZ[\sH_r])_{r\ge 0}.
\]
Moreover, it is also possible to sheafify the classical moments of a measure to 
a morphism of pro-sheaves
\[
\mom^{k}:\Lambda(\sH)\to \Gamma_{k}(\sH),
\]
where $\Gamma_k(\sH)$ is the $k$-th graded piece of the divided power algebra $\Gamma_\Zp(\sH)$. Thus the sheaf $\Lambda(\sH)$  $p$-adically interpolates the $\Gamma_{k}(\sH)$. For the $\Qp$-sheaf $\sH_\Qp$ the
natural map $\Sym^{k}\sH_\Qp\to \Gamma_k(\sH_\Qp)$ is an isomorphism
and the moment map gives rise to morphisms
\[
\mom^{k}: H^{2d-1}(S,\Lambda(\sH)(d)) \to H^{2d-1}(S,\Sym^{k}\sH_\Qp(d)).
\]
To understand this better, it is instructive to consider the 
case of an abelian scheme $\pi:A\to S$ over a scheme $S$ which is of finite type over $\Spec\ZZ$ (see also Section \ref{sec:abelian-schemes}). Then
\[
H^{2d-1}(S,\Lambda(\sH)(d))=\prolim_r H^{2d-1}(A[p^{r}],\ZZ/p^{r}\ZZ(d))
\]
where the inverse limit is taken with respect to the trace maps 
along $A[p^{r}]\to A[p^{r-1}]$. The right hand side is obviously an
Iwasawa theoretic construction. In the one dimensional case $d=1$, the right hand side has an interpretation as an inverse limit
of units via Kummer theory. 

Our main result can now be formulated as follows:
\begin{theorem*}[see Theorem \ref{thm:interpolation}]
There exists  a cohomology class  
\[
_\alpha\EI(t)_N\in H^{2d-1}(S,\Lambda(\sH)(d)) 
\]
called the \emph{Eisenstein-Iwasawa class},
such that 
\[
\mom^{k}(_\alpha\EI(t)_N)={N^{k}}{_\alpha}\Eis^{k}_\Qp(t).
\]
\end{theorem*} 
This interpolation result in the elliptic case is one of the key ingredients in the proof of
an explicit reciprocity law for Rankin-convolutions of modular forms in
\cite{KLZ}. 

The use of this theorem also considerably simplifies the computations  of the degeneration of polylogarithm in \cite{Kings-Soule}. We hope to treat this at another occasion.

We would also like to point out an important open problem:
In the one-dimensional torus or the elliptic curve case, the Eisenstein-Iwasawa class has a direct description in terms of cyclotomic units or  Kato's norm compatible elliptic units respectively. 
Unfortunately, we do not have a similar description of the Eisenstein-Iwasawa class in the higher dimensional case. 
\section*{Acknowledgements}
This paper is a sequel to \cite{Kings-Soule}, which was written as a response to John Coates' wish to have an exposition of the results in \cite{HuKi} at a conference in Pune, India. This triggered a renewed interest of mine in the $p$-adic interpolation of motivic Eisenstein classes which goes back to \cite{Kings-Tamagawa}. In this sense the paper would not exist without the persistence of John Coates. 
The paper \cite{HuKiPol} with Annette Huber   
created the right framework to treat these questions. It is a pleasure  to thank them both and to dedicate this paper to John Coates on the occasion of his seventieth birthday.
\section{Notations and set up}

\subsection{The category of $\Zp$-sheaves}
\label{section-Zp-sheaves} All schemes will be separated of finite type over a noetherian regular scheme of dimension $0$ or $1$. 
Let $X$ be such a scheme  and let $p$ be a prime invertible on $X$. We work in the category of constructible $\Zp$-sheaves  $\sS(X)$ on $X$ in the
sense of \cite[Expos\'e V]{sga5}. 

Recall that a constructible $\Zp$-sheaf is 
an inverse system $\sF=(\sF_n)_{n\ge 1}$ where $\sF_n$ is a constructible $\ZZ/p^{n}\ZZ$-sheaf and the transition maps
$\sF_n\to \sF_{n-1}$ factor into isomorphisms
\[
\sF_n\otimes_{\ZZ/p^{n}\ZZ}\ZZ/p^{n-1}\ZZ\isom \sF_{n-1}.
\]
The $\Zp$-sheaf is \emph{lisse}, if each $\sF_n$ is locally constant. If $X$ is connected and $x\in X$ is a geometric point, then the category of lisse sheaves is equivalent to the category of finitely generated $\Zp$-modules with a continous $\pi_1(X,x)$-action. For a general $\Zp$-sheaf there exists a finite partition of $X$ into locally closed subschemes $X_i$, such that $\sF\mid_{X_i}$ is lisse (see \cite[Rapport, Prop. 2.4., 2.5.]{SGA41/2}).

For a $\Zp$-sheaf $\sF$ we denote by $\sF\otimes\Qp$ its image in the
category of $\Qp$-sheaves, i.e., the quotient category modulo $\Zp$-torsion sheaves.

We also consider the ``derived'' category $\DD(X)$ of $\sS(X)$
in the sense of Ekedahl \cite{Eke}. This is a triangulated category with a $t$-structure whose heart is the category of constructible $\Zp$-sheaves.
By loc. cit. Theorem 6.3 there is a full 6 functor formalism on these categories. 

Recall that an inverse system $A:=(A_r)_{r\ge 0}$ (in any abelian category $\cA$) satisfies the Mittag-Leffler condition (resp. is Mittag-Leffler zero), if for each $r$ the
image of $A_{r+s}\to A_r$ is constant for all sufficiently big $s$ (is zero for some $s\ge 1$).
If $A$ satisfies the 
Mittag-Leffler condition and $\cA$ satisfies $AB4^{*}$ (i.e. products exists and products of epimorphisms are epimorphisms)  then 
${\prolim}_r^{1}A_r=0$ ( see \cite[Proposition 1]{Roos}).
If $A$ is Mittag-Leffler zero, then for 
each left exact functor $h:\cA\to \cB$ one has $R^{i}\prolim_r h(A_r)=0$ for all $i\ge 0$ (\cite[Lemma 1.11.]{Jannsen-cont}).

For a pro-system of \'etale sheaves $\sF=(\sF_r)_{r\ge 0}$ on $X$ we work with Jannsen's continuous \'etale cohomology $H^{i}(X,\sF)$ which is the $i$-th derived functor of $\sF\mapsto \prolim_r H^{0}(X,\sF_n)$.
By \cite[3.1]{Jannsen-cont} one has an exact sequence
\begin{equation}
0\to {\prolim}_r^{1}H^{i-1}(X,\sF_r)\to H^{i}(X,\sF)\to \prolim_rH^{i}(X,\sF_r)\to 0.
\end{equation}
Note in particular, that if $H^{i-1}(X,\sF_r)$ is finite for all $r$, one has
\begin{equation}\label{eq:cont-coh}
H^{i}(X,\sF)=\prolim_r H^{i}(X,\sF_r).
\end{equation}
For $\sF=(\sF_r)$ Mittag-Leffler zero, one has for all $i\ge 0$
\begin{equation}\label{eq:vanishing-MLzero}
H^{i}(X,\sF)=0.
\end{equation}

\subsection{The divided power algebra} Let $A$ be a commutative ring and $M$ be an $A$-module. Besides the usual symmetric power algebra $\Sym_A(M)$ we need also the  divided power algebra $\Gamma_A(M)$ (see \cite[Appendix A]{BeOg} for more details).

The $A$-algebra $\Gamma_A(M)$ is a graded augmented algebra with
$\Gamma_0(M)=A$, $\Gamma_1(M)=M$ and augmentation ideal 
$\Gamma^{+}(M):=\bigoplus_{k\ge 1}\Gamma_k(M)$. For each element $m\in M$ one has the divided power $m^{[k]}\in \Gamma_k(M)$ with the property that $m^{k}=k! m^{[k]}$ where 
$m^{k}$ denotes the $k$-th power of $m$ in $\Gamma_A(M)$. 
Moreover, one has the formula
\[
(m+n)^{[k]}=\sum_{i+j=k}m^{[i]}n^{[j]}.
\]
In the case where $M$ is a free $A$-module with basis $m_1,\ldots,m_r$ the $A$-module $\Gamma_k(M)$ is free with 
basis $\{m_1^{[i_1]}\cdots m_r^{[i_r]}\mid \sum i_j=k\}$. Further, for $M$ free, there is an $A$-algebra isomorphism
\[
\Gamma_A(M)\isom \TSym_A(M)
\]
with the algebra of symmetric tensors ($\TSym^{k}_A(M)\subset \Sym^{k}_A(M)$ are the invariants of the symmetric group), which maps $m^{[k]}$ to 
$m^{\otimes k}$. Also, by the universal property of $\Sym_A(M)$, one has an $A$-algebra homomorphism
\begin{equation}\label{eq:Sym-und-Gamma}
\Sym_A(M)\to \Gamma_A(M)
\end{equation}
which maps $m^{k}$ to $k!m^{[k]}$. In particular, if $A$ is a $\QQ$-algebra, this  map is an isomorphism.

If $M$ is free and $M^{\vee}:=\Hom_A(M,A)$ denotes the $A$-dual one has in particular
\[
\Sym^{k}(M^{\vee})\isom\Gamma_k(M)^{\vee}\isom \TSym^{k}_A(M)^{\vee}.
\]
The algebra  $\Gamma_A(M)$ has the advantage over $\TSym_A(M)$ of being compatible with arbitrary base change 
\[
\Gamma_A(M)\otimes_AB\isom \Gamma_B(M\otimes_AB)
\]
and thus sheafifies well. 
Recall from
\cite[I 4.2.2.6.]{Illusie} that if $\sF$ is an \'etale sheaf of $\Zp$-modules, then  $\Gamma_{\Zp}(\sF)$ is defined
to be the sheaf associated to the presheaf
\begin{equation}
U\mapsto \Gamma_{\Zp(U)}(\sF(U)).
\end{equation}
\begin{definition}
We denote by 
\[
\widehat\Gamma_A(M):=\prolim_r \Gamma_A(M)/\Gamma^{+}(M)^{[r]}
\]
the completion of $\Gamma_A(M)$ with respect to the divided powers of the 
augmentation ideal. 
\end{definition}
Note that $\Gamma^{+}(M)^{[r]}=\bigoplus_{k\ge r}\Gamma_k(M)$ so that as $A$-module one has $\widehat\Gamma_A(M)\isom \prod_{k\ge 0}\Gamma_k(M)$.

In the same way we define the completion of $\Sym_A(M)$ with respect
to the augmentation ideal $\Sym^{+}_A(M)$ to be
\begin{equation}
\widehat{\Sym}_A(M):=\prolim_k \Sym_A(M)/(\Sym^{+}_A(M))^{k}
\end{equation}
\subsection{Unipotent sheaves}\label{sec:unipotent}Let $\Lambda=\ZZ/p^{r}\ZZ, \Zp$ or $\Qp$ and let $\pi:X\to S$ be a separated scheme of finite type, with $X,S$ as in \ref{section-Zp-sheaves}. 
A $\Lambda$-sheaf $\sF$ on $X$ is \emph{unipotent of length $n$}, if it has a filtration
$0=\sF^{n+1}\subset \sF^n\subset \ldots\subset \sF^0=\sF$ such that 
$\sF^i/\sF^{i+1}\isom\pi^*\sG^i$ for a $\Lambda$-sheaf $\sG^i$ on $S$.

The next lemma is taken from \cite{HuKiPol}, where it is stated in 
the setting of $\Qp$-sheaves. 
\begin{lemma}\label{lem:unipotent_uppershriek}Let $\Lambda=\ZZ/p^{r}\ZZ, \Zp$ or $\Qp$ and
let $\pi_1:X_1\to S$ and $\pi_2:X_2\to S$ be smooth of constant fibre dimension $d_1$ and $d_2$. Let $f:X_1\to X_2$ be an $S$-morphism.
Let $\sF$ be a unipotent $\Lambda$-sheaf. Then
\[ f^!\sF=f^*\sF(d_1-d_2)[2d_1-2d_2].\]
\end{lemma}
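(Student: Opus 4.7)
My plan is to prove this by induction on the length $n$ of the unipotent filtration of $\sF$, after setting up a natural transformation that links the two sides of the claimed formula.

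\textbf{Base case.} When $n=1$, i.e.\ $\sF = \pi_2^*\sG$ for a $\Lambda$-sheaf $\sG$ on $S$, relative purity for the smooth morphism $\pi_2$ gives $\pi_2^!\sG \cong \pi_2^*\sG(d_2)[2d_2]$, hence $\pi_2^*\sG \cong \pi_2^!\sG(-d_2)[-2d_2]$. Using $(\pi_2 f)^! = f^!\pi_2^!$ and relative purity for the smooth $\pi_1$:
\[
f^!\pi_2^*\sG \cong f^!\pi_2^!\sG(-d_2)[-2d_2] \cong \pi_1^!\sG(-d_2)[-2d_2] \cong \pi_1^*\sG(d_1-d_2)[2d_1-2d_2].
\]
Since $\pi_1^* = f^*\pi_2^*$, this equals $f^*\sF(d_1-d_2)[2d_1-2d_2]$, as claimed.

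\textbf{Natural transformation.} To upgrade the length-one statement to a natural transformation
\[
\eta : f^*(-)(d_1-d_2)[2d_1-2d_2] \longrightarrow f^!(-),
\]
I would factor $f = p_2 \circ \Gamma_f$, where $\Gamma_f : X_1 \hookrightarrow X_1 \times_S X_2$ is the graph, a closed immersion of codimension $d_2$ between smooth $S$-schemes (using that $X_2/S$ is separated), and $p_2 : X_1 \times_S X_2 \to X_2$ is the smooth projection of relative dimension $d_1$. Relative purity for $p_2$ produces $p_2^! \cong p_2^*(d_1)[2d_1]$, while absolute purity (Gabber) for the regular closed immersion $\Gamma_f$ provides a natural map $\Gamma_f^*(-d_2)[-2d_2] \to \Gamma_f^!$. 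Composing gives $\eta$, and the base-case computation shows that $\eta_{\pi_2^*\sG}$ is an isomorphism for every $\sG$.

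\textbf{Inductive step.} For $n > 1$, fix a short exact sequence $0 \to \sF^1 \to \sF \to \pi_2^*\sG^0 \to 0$ with $\sF^1$ unipotent of length $n-1$. Applying the triangulated functors $f^!$ and $f^*(d_1-d_2)[2d_1-2d_2]$ produces two distinguished triangles in $\DD(X_1)$ linked by $\eta$ into a morphism of triangles. By the inductive hypothesis and the base case, $\eta$ is an isomorphism on $\sF^1$ and on $\pi_2^*\sG^0$; the five-lemma in the triangulated category then forces $\eta_\sF$ to be an isomorphism as well.

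\textbf{Main obstacle.} The substantive part is not the induction but the construction of $\eta$ as an honest natural transformation (so that the five-lemma applies), and the verification that the $\eta$ obtained from the graph-factorization coincides with the isomorphism produced in the length-one computation. This compatibility between relative purity for $p_2$ and absolute purity for $\Gamma_f$ in the six-functor formalism on $\DD(-)$ from Ekedahl is where the real work sits; once it is in place, the rest of the argument is formal dévissage.
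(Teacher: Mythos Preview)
Your proof is correct and follows the same overall architecture as the paper's: verify the claim for sheaves pulled back from $S$, produce a natural transformation $f^*(-)(c)[2c]\to f^!(-)$, and then run d\'evissage on the unipotent filtration. The difference lies in how the natural transformation is built. You factor $f$ through its graph and invoke purity for the projection together with absolute purity for the closed immersion $\Gamma_f$; the paper instead uses the universal comparison map
\[
f^*\sF\otimes f^!\Lambda\;\longrightarrow\; f^!\sF
\]
obtained by adjunction from the projection formula $Rf_!(f^*\sF\otimes f^!\Lambda)\cong \sF\otimes Rf_!f^!\Lambda\to \sF$. This map exists in any six-functor formalism without any purity input, and combined with the base case (which shows $f^!\Lambda\cong\Lambda(c)[2c]$) it supplies the required $\eta$ for free. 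Your graph-factorization approach works, but it is heavier: you appeal to Gabber's absolute purity where the easier purity for a section of the smooth map $p_1:X_1\times_S X_2\to X_1$ would already suffice, and you then face the compatibility check you flag as the ``main obstacle''. The paper's route sidesteps this by taking the canonical map from the projection formula, for which the compatibility with the base-case isomorphism is a standard identity in the six-functor formalism rather than a separate verification.
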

\begin{proof} Put $c=d_1-d_2$ the relative dimension of $f$.
We start with the case $\sF=\pi_2^*\sG$. 
In this case
\begin{multline*}
f^!\sF=f^!\pi_2^*\sG=f^!\pi_2^!\sG(-d_2)[-2d_2]=\pi_1^!\sG(-d_2)[-2d_2]\\
=\pi_1^*\sG(c)[2c]=f^*\pi_2^*\sG(c)[2c]
=f^*\sF\otimes\Lambda(c)[2c].
\end{multline*}
In particular, $f^!\Lambda=\Lambda(c)[2c]$ and we may rewrite the 
 formula as
\[ f^*\sF\otimes  f^!\Lambda=f^!(\sF\otimes \Lambda).\]
There is always a map from the left to right via adjunction from
the projection formula
\[ Rf_!(f^*\sF\otimes f^!\Lambda)= \sF\otimes Rf_!f^!\Lambda\to \sF\otimes \Lambda.\]
Hence we can argue on the unipotent length of $\sF$ and it suffices
to consider the case $\sF=\pi^*\sG$. This case was settled above. \end{proof}
The next lemma is also taken from \cite{HuKiPol}.
Let $X\to S$ be a smooth scheme with connected fibres and $e:S\to X$ a section. Homomorphisms of unipotent sheaves are completely determined by their restriction to $S$ via $e^{*}$:
\begin{lemma} \label{lemma:e-upper-star}
Let $\pi:X\to S$ be smooth with connected fibres and $e:S\to X$ a section of $\pi$. Let $\Lambda=\ZZ/p^{r}\ZZ, \Zp$ or $\Qp$ and $\sF$ a unipotent $\Lambda$-sheaf on $X$. Then
\[
e^{*}:\Hom_X(\Lambda,\sF)\to \Hom_S(\Lambda,e^{*}\sF)
\]
is injective. 
\end{lemma}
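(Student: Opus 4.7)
The plan is to argue by induction on the unipotent length $n$ of $\sF$, with the base case reducing to the fact that on a connected scheme the global sections of a constant sheaf are detected at any single geometric point.

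For the base case $n=1$ we have $\sF=\pi^{*}\sG$ for some $\Lambda$-sheaf $\sG$ on $S$. Suppose $\phi:\Lambda\to\pi^{*}\sG$ satisfies $e^{*}\phi=0$. To show $\phi=0$ it suffices to show that all geometric stalks of $\phi$ vanish, and since every geometric point of $X$ lies in some fibre $X_{\bar s}$ it is enough to show $\phi|_{X_{\bar s}}=0$ for every geometric point $\bar s\to S$. Now $(\pi^{*}\sG)|_{X_{\bar s}}$ is the constant sheaf with value $\sG_{\bar s}$, so $\phi|_{X_{\bar s}}$ is an element of
\[
\Hom_{X_{\bar s}}(\Lambda,\sG_{\bar s})=H^{0}(X_{\bar s},\sG_{\bar s})=\sG_{\bar s},
\]
the last equality because $X_{\bar s}$ is connected. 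This element is just the value of $\phi|_{X_{\bar s}}$ at any geometric point of $X_{\bar s}$, in particular at $e(\bar s)$, where it equals $(e^{*}\phi)_{\bar s}=0$. Hence $\phi|_{X_{\bar s}}=0$ for every $\bar s$ and therefore $\phi=0$.

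For the inductive step, assume the statement holds for unipotent sheaves of length at most $n$ and let $\sF$ have length $n+1$ with filtration $0=\sF^{n+1}\subset\sF^{n}\subset\cdots\subset\sF^{0}=\sF$ and $\sF^{i}/\sF^{i+1}\cong\pi^{*}\sG^{i}$. The short exact sequence
\[
0\to\sF^{1}\to\sF\to\pi^{*}\sG^{0}\to 0
\]
remains short exact after $e^{*}$ since $e^{*}$ is exact. Given $\phi:\Lambda\to\sF$ with $e^{*}\phi=0$, composing with $\sF\to\pi^{*}\sG^{0}$ gives a map $\bar\phi:\Lambda\to\pi^{*}\sG^{0}$ with $e^{*}\bar\phi=0$; the base case forces $\bar\phi=0$, so $\phi$ lifts to $\psi:\Lambda\to\sF^{1}$. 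Since $e^{*}\sF^{1}\to e^{*}\sF$ remains injective, $e^{*}\psi=0$, and the induction hypothesis applied to $\sF^{1}$ (of length $n$) gives $\psi=0$, hence $\phi=0$.

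The main obstacle is really the base case. Everything downstream is a straightforward devissage via the unipotent filtration, but the base case is where the geometric hypothesis (connected fibres together with the existence of the section $e$) enters in an essential way: without connected fibres, a morphism of constant sheaves is not determined by a single fibrewise value, and without the section one has no canonical point at which to evaluate.
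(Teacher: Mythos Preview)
Your proof is correct, and the inductive d\'evissage via the unipotent filtration is the same as in the paper. The difference lies in the base case $\sF=\pi^{*}\sG$: you argue pointwise, restricting to geometric fibres and using connectedness to see that a section of a constant sheaf is determined by its value at the single point $e(\bar s)$. The paper instead works in the six-functor formalism: it identifies $\Hom_X(\Lambda,\pi^{*}\sG)\isom\Hom_S(R\pi_!\pi^{!}\Lambda,\sG)$ via adjunction and observes that this isomorphism factors through $e^{!}=e^{*}$ followed by the map induced by the trace $R\pi_!\pi^{!}\Lambda\to\Lambda$, so $e^{*}$ admits a left inverse and is therefore injective. Your argument is more elementary and makes the role of the connected-fibres hypothesis completely transparent; the paper's argument is more in keeping with the derived-category machinery used throughout and yields the slightly stronger statement that $e^{*}$ is split injective.
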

\begin{proof}
Let $0\to\sF_1\to \sF_2\to \sF_3\to 0$ be a short exact sequence of 
unipotent $\Lambda$-sheaves on $G$. 
As  $e^*$ is exact  and  $\Hom$ left exact, we get a commutative
diagram of exact sequences
\[\xymatrix{%
0\ar[r]&\Hom_X(\Lambda,\sF_1)\ar[r]\ar[d]&\Hom_X(\Lambda,\sF_2)\ar[r]\ar[d]&\Hom_X(\Lambda,\sF_3)\ar[d]\\
0\ar[r]&\Hom_S(\Lambda,e^*\sF_1)\ar[r]&\Hom_S(\Lambda,e^*\sF_2)\ar[r]&\Hom_S(\Lambda,e^*\sF_3).
}\]
Suppose that the left and right vertical arrows are injective, then the middle one is injective as well and it is enough to show the lemma
in the case where $\sF=\pi^{*}\sG$.
But the isomorphism
\[
\Hom_X(\Lambda,\pi^{*}\sG)\isom \Hom_X(\pi^{!}\Lambda,\pi^{!}\sG)\isom
\Hom_S(R\pi_!\pi^{!}\Lambda,\sG)
\]
factors through 
\[
\Hom_X(\pi^{!}\Lambda,\pi^{!}\sG)\xrightarrow{e^{!}}\Hom_S(\Lambda,\sG)\to
\Hom_S(R\pi_!\pi^{!}\Lambda,\sG)
\]
where the last map is induced by the trace map $R\pi_!\pi^{!}\Lambda\to \Lambda$. This proves the claim.
\end{proof}

\subsection{The geometric situation}\label{sec:geom-sit}
We recall the geometric set up from \cite{HuKiPol} using as much as possible the notations from loc. cit.
Let \[ \pi:G\to S\]
be a smooth separated commutative group scheme with connected fibres of relative dimension $d$. We denote by $e:S\to G$ the unit section and by $ \mu:G\times_SG\to G$ the multiplication .
Let $j:U\to G$ be the open complement of $e(S)$.

Let $\iota_D:D\to G$ be a closed subscheme with structural map
$\pi_D:D\to S$. Typically $\pi_D$ will be \'etale and
contained in the $c$-torsion of $G$ for some $c\geq 1$.  We note in passing, that for $c$ invertible on $S$ the $c$-torsion points of $G$, i.e. the kernel of the $c$-multiplication $G[c]$, is quasi-finite and \'etale over $S$.  
Denote by
$j_D:U_D=G\setminus D\to G$ the open complement of $D$. We summarize the situation in the basic diagram
\[
\xymatrix{
U_D:=G\setminus D\ar[r]^/.6em/{j_D}\ar[rd]&G\ar[d]^{\pi}&D\ar[l]_{\iota_D}\ar[dl]^{\pi_D}\\
&S
}
\]
We will also consider morphisms $\phi:G_1\to G_2$ of $S$-group schemes as above. In this case we decorate all notation with an index $1$ or $2$, e.g., $d_1$
for the relative dimension of $G_1/S$.

\section{The logarithm sheaf}
\subsection{Homology of $G$}
The basic sheaf in our constructions is the relative first $\Zp$-homology $\sH_G$ of $G/S$, which we define as follows:
\begin{definition}\label{defn:tate}
For the group scheme $\pi:G\to S$ we let 
\[
\sH:=\sH_G:=R^{2d-1}\pi_!\Zp(d)=R^{-1}\pi_!\pi^!\Zp.
\]
We write $\sH_r:=\sH\otimes\ZZ/p^{r}\ZZ$ and $\sH_{\Qp}:=\sH\otimes \Qp$ for the associated $\Qp$-sheaf.
\end{definition}
Note that $\sH$ is not a lisse $\Zp$-sheaf in general, but the stalks
are free $\Zp$-modules of finite rank, which follows for example from
Lemma \ref{lemma:description-of-H} below.

The sheaf $\sH$ and more generally $R^{i}\pi_!\Zp$ is covariant functorial for any map of $S$-schemes $f:G\to X$ using the adjunction $f_!f^{!}\Zp\to \Zp$. In particular,
the group multiplication $\mu:G\times_SG \to G$ induces a product
\[
R^{i}\pi_!\Zp(d)\otimes R^{j}\pi_!\Zp(d)\to R^{i+j-2d}\pi_!\Zp(d)
\]
and the diagonal $\Delta:G\to G\times_SG$ induces a coproduct 
\[
R^{i}\pi_!\Zp(d)\to \bigoplus_{j}R^{j}\pi_!\Zp(d)\otimes R^{2d+i-j}\pi_!\Zp(d)
\]
on $R^{\cdot}\pi_!\Zp$, which gives it the structure of a Hopf algebra 
and one has
\begin{equation}\label{eq:Hopf-alg}
R^{i}\pi_!\Zp(d)\isom \bigwedge^{2d-i}\sH
\end{equation}
(this follows by base change to geometric points and duality from \cite[Lemma 4.1.]{BrSz}). The same result holds for $\ZZ/p^{r}\ZZ$-coefficients.
\begin{lemma}\label{lemma:description-of-H}
Let $G[p^{r}]$ be the kernel of the $p^{r}$-multiplication $[p^{r}]:G\to G$. Then there is a canonical isomorphism of \'etale sheaves
\[
G[p^{r}]\isom R^{-1}\pi_!\pi^!\ZZ/p^{r}\ZZ=\sH_r.
\]
In particular, $\sH_G$ is the $p$-adic Tate-module of $G$.
\end{lemma}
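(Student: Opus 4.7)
The plan is to reformulate both sides using relative purity and Poincar\'e--Verdier duality, then to construct a natural comparison map and verify it fibrewise. Since $\pi$ is smooth of relative dimension $d$ with $p$ invertible on $S$, relative purity gives $\pi^{!}\ZZ/p^{r}\ZZ\cong\ZZ/p^{r}\ZZ(d)[2d]$, hence
\[
\sH_{r}=R^{-1}\pi_{!}\pi^{!}\ZZ/p^{r}\ZZ=R^{2d-1}\pi_{!}\ZZ/p^{r}\ZZ(d),
\]
and the task reduces to identifying this sheaf with $G[p^{r}]$ viewed as the \'etale sheaf on $S$ it represents.

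For the comparison map, I exploit Kummer theory on $G$. Since $p$ is invertible on $S$, the isogeny $[p^{r}]\colon G\to G$ is finite \'etale with kernel the \'etale $S$-group scheme $G[p^{r}]$, and makes $G$ a torsor under $\pi^{*}G[p^{r}]$ over itself. Its torsor class is an element $c\in H^{1}(G,\pi^{*}G[p^{r}])$. Via the projection formula $R\pi_{*}\pi^{*}G[p^{r}]\cong G[p^{r}]\otimes R\pi_{*}\ZZ/p^{r}\ZZ$ and the Leray edge map, $c$ corresponds to a morphism $G[p^{r}]^{\vee}\to R^{1}\pi_{*}\ZZ/p^{r}\ZZ$. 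On the other hand, the adjunction $R\pi_{!}\dashv\pi^{!}$ combined with the purity isomorphism yields Poincar\'e--Verdier duality
\[
R\pi_{*}\ZZ/p^{r}\ZZ\cong R\Hom(R\pi_{!}\pi^{!}\ZZ/p^{r}\ZZ,\ZZ/p^{r}\ZZ);
\]
extracting $\mathcal{H}^{1}$ identifies $R^{1}\pi_{*}\ZZ/p^{r}\ZZ$ with $\Hom(\sH_{r},\ZZ/p^{r}\ZZ)$, and dualising yields the desired natural morphism $\sH_{r}\to G[p^{r}]$.

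Both sides being constructible \'etale sheaves on $S$, it suffices to check this map is an isomorphism on geometric stalks. The problem reduces to the classical statement that, for a smooth connected commutative group scheme $G_{\bar s}$ of dimension $d$ over an algebraically closed field of characteristic $\ne p$, the stalk $R^{2d-1}\pi_{!}\ZZ/p^{r}\ZZ(d)|_{\bar s}$ is canonically $G_{\bar s}[p^{r}]$: each $[n]\colon G_{\bar s}\to G_{\bar s}$ is a Galois \'etale cover with group $G_{\bar s}[n]$, and these exhaust the abelian prime-to-characteristic \'etale covers of $G_{\bar s}$, so $\pi_{1}^{\et}(G_{\bar s},e)/p^{r}=G_{\bar s}[p^{r}]$, and Poincar\'e duality together with universal coefficients identifies the first \'etale homology with $\ZZ/p^{r}\ZZ$-coefficients with $G_{\bar s}[p^{r}]$. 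The main obstacle is the bookkeeping needed to exhibit the comparison map naturally in families and at the level of $\Zp$-sheaves, so that passing to the inverse limit over $r$ yields the identification of $\sH$ with the $p$-adic Tate module asserted in the final sentence; the fibrewise verification itself is classical Kummer theory.
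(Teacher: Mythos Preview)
Your proposal is correct and follows essentially the same strategy as the paper: both construct the comparison map from the Kummer torsor class in $H^{1}(G,\pi^{*}G[p^{r}])$ and then verify it is an isomorphism on geometric fibres using the classical identification of $H^{1}$ (or $\pi_{1}^{\et,\mathrm{ab}}$) of a smooth connected commutative group over an algebraically closed field.

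The only difference is in packaging the torsor class as a map. The paper runs the adjunction chain
\[
H^{1}(G,\pi^{*}G[p^{r}])\cong\Ext^{1}_{G}(\pi^{!}\ZZ/p^{r}\ZZ,\pi^{!}G[p^{r}])\cong\Ext^{1}_{S}(R\pi_{!}\pi^{!}\ZZ/p^{r}\ZZ,G[p^{r}])\to\Hom_{S}(\sH_{r},G[p^{r}])
\]
directly, landing in $\Hom_{S}(\sH_{r},G[p^{r}])$ without ever dualising. You instead pass through the Leray edge map and projection formula to get $G[p^{r}]^{\vee}\to R^{1}\pi_{*}\ZZ/p^{r}\ZZ$, identify the target with $\sH_{r}^{\vee}$ via Verdier duality, and then dualise. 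This is equivalent, but your route silently uses that $G[p^{r}]$ and $\sH_{r}$ are reflexive (double-dual equals identity), which needs a word of justification since the paper explicitly notes $\sH$ need not be lisse on $S$; it is fine because the stalks are free $\ZZ/p^{r}\ZZ$-modules and one may stratify. The paper's adjunction route simply sidesteps this bookkeeping.
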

\begin{proof}
This is standard and we only sketch the proof: Consider 
$G[p^{r}]$ as an \'etale sheaf on $S$. The Kummer sequence
is a $G[p^{r}]$-torsor on $G$, hence gives a class in 
\begin{multline*}
H^{1}(G,\pi^{*}G[p^{r}])\isom \Ext^{1}_G(\pi^{*}\ZZ/p^{r}\ZZ,\pi^{*}G[p^{r}])\isom \Ext^{1}_G(\pi^{!}\ZZ/p^{r}\ZZ,\pi^{!}G[p^{r}])\isom \\
\isom \Ext^{1}_S(R\pi_!\pi^{!}\ZZ/p^{r}\ZZ,G[p^{r}])\isom \Hom_S(R^{-1}\pi_!\pi^!\ZZ/p^{r}\ZZ,G[p^{r}]).
\end{multline*}
Thus the Kummer torsor induces a map $R^{-1}\pi_!\pi^!\ZZ/p^{r}\ZZ\to G[p^{r}]$ and one can perform a base change to geometric points $\overline{s}\in S$ 
to show that this is an isomorphism. But this follows then from Poincar\'e-duality and the isomorphism $\Hom_{\overline{s}}(G[p^{r}],\mu_{p^{r}})\isom H^{1}(G,\mu_{p^{r}})$
shown in \cite[Lemma 4.2.]{BrSz}.
\end{proof}

\subsection{The first logarithm sheaf}
Consider the complex $R\pi_!\pi^!\Zp$  calculating the homology of $\pi:G\to S$ and its canonical filtration 
whose associated graded pieces are the $R^{i}\pi_!\pi^!\Zp$.
We apply this to 
\[
R\Hom_G(\pi^!\Zp,\pi^!\sH)\isom R\Hom_S(R\pi_!\pi^{!}\Zp,\sH).
\]
Then the resulting hypercohomology spectral sequence 
gives rise to the five term sequence
\begin{multline*}
0\to \Ext^1_S(\Zp,\sH)\xrightarrow{\pi^!}
\Ext^1_G(\pi^!\Zp,\pi^!\sH)\to 
\Hom_S(\sH,\sH)\to\\
\to
\Ext^2_S(\Zp,\sH)\xrightarrow{\pi^!} \Ext^2_G(\pi^!\Zp,\pi^!\sH)
\end{multline*}
and the maps $\pi^{!}$ are injective because they admit
the splitting $e^{!}$ induced by the unit section $e$. This gives
\begin{equation}\label{eq:Log-1}
0\to \Ext^1_S(\Zp,\sH)\xrightarrow{\pi^!}
\Ext^1_G(\pi^!\Zp,\pi^!\sH)\to 
\Hom_S(\sH,\sH)\to 0.
\end{equation}
Note that $\Ext^1_G(\pi^!\Zp,\pi^!\sH)\isom \Ext^1_G(\Zp,\pi^*\sH)$. The same construction is also possible with the base ring $\Lambda_r:=\ZZ/p^{r}\ZZ$ and
$\sH_r$ and gives the exact sequence
\begin{equation}\label{eq:Log-Lambda}
0\to \Ext^1_S(\Lambda_r,\sH_r)\xrightarrow{\pi^!}
\Ext^1_G(\pi^!\Lambda_r,\pi^!\sH_r)\to 
\Hom_S(\sH_r,\sH_r)\to 0.
\end{equation}
\begin{definition}\label{defn:log}
The \emph{first logarithm sheaf} $(\Log{1},\one{1})$ on $G$ consists of an extension class
\[
0\to \pi^*\sH\to \Log{1}\to \Zp\to 0
\]
such that its image in $\Hom_S(\sH,\sH)$ is the identity
together with
a fixed splitting $\mathbf{1}^{(1)}:e^*\Zp\to e^*\Log{1}$. In exactly the same way one defines $\Log{1}_{\Lambda_r}$. 
We denote by $\Log{1}_{\Qp}$ the associated $\Qp$-sheaf.
\end{definition}

The existence and uniqueness of $(\Log{1},\mathbf{1}^{(1)})$ follow directly from \eqref{eq:Log-1}. The automorphisms of $\Log{1}$ form a torsor under $\Hom_G(\Zp,\pi^{*}\sH)$. In particular, the 
pair $(\Log{1},\mathbf{1}^{(1)})$ admits no automorphisms except the
identity.

It is obvious from the definition that one has
\begin{equation}
\Log{1}\otimes_\Zp\Lambda_r\isom \Log{1}_{\Lambda_r}
\end{equation}
so that $\Log{1}=(\Log{1}_{\Lambda_r})_{r\ge 0}$. Moreover, 
$\Log{1}$ is compatible with arbitrary base change. If 
\begin{equation}
\begin{CD}
G_T@>f_T>> G\\
@V\pi_T VV@VV\pi V\\
T@>f>> S
\end{CD}
\end{equation}
is a cartesian diagram one has $f_T^{*}\Log{1}_G\isom \Log{1}_{G_T}$
and $f_T^{*}(\one{1})$ defines a splitting.

Let 
\[
\varphi:G_1\to G_2
\] 
be a homomorphism of group schemes of relative 
dimension $d_1$, $d_2$, respectively and write $c:=d_1-d_2$.
\begin{theorem}\label{thm:log-1-functoriality}
For  $\varphi:G_1\to G_2$ as above, there is a unique morphism of sheaves
\[
\varphi_\#:\Log{1}_{G_1}\to\varphi^{*}\Log{1}_{G_2}\isom \varphi^{!}\Log{1}_{G_2}(-c)[-2c]
\]
such that $\varphi_\#(\one{1}_{G_1})=\one{1}_{G_2}$. Moreover, if $\varphi$ is an isogeny of
degree prime to $p$, then $\varphi_\#$ is an isomorphism.
\end{theorem}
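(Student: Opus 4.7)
The strategy is to realize $\varphi_\#$ as a morphism of extensions induced by the covariant functoriality of homology, and to pin it down uniquely by the matching of splittings. First I would observe that since $\varphi$ is a group homomorphism we have $\varphi\circ e_1=e_2$ and $\pi_2\circ\varphi=\pi_1$, so pulling back the defining sequence of $\Log{1}_{G_2}$ produces an extension
\[
0\to\pi_1^*\sH_{G_2}\to\varphi^*\Log{1}_{G_2}\to\Zp\to 0
\]
on $G_1$, equipped with the splitting $\varphi^*(\one{1}_{G_2})$ at $e_1$. The analogue of the five-term sequence \eqref{eq:Log-1} with $\sH_{G_1}$ replaced by $\sH_{G_2}$ reads
\[
0\to \Ext^1_S(\Zp,\sH_{G_2})\xrightarrow{\pi_1^!} \Ext^1_{G_1}(\Zp,\pi_1^*\sH_{G_2})\to \Hom_S(\sH_{G_1},\sH_{G_2})\to 0,
\]
and naturality under base change along $\varphi$ identifies the image of the class of $\varphi^*\Log{1}_{G_2}$ in $\Hom_S(\sH_{G_1},\sH_{G_2})$ with the covariant functorial map $\varphi_*:\sH_{G_1}\to\sH_{G_2}$ induced by the adjunction $\varphi_!\varphi^!\to\id$.

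Next I would push the extension $\Log{1}_{G_1}$ out along $\pi_1^*(\varphi_*)$ to obtain an extension $E$ of $\Zp$ by $\pi_1^*\sH_{G_2}$ on $G_1$, equipped with the splitting at $e_1$ inherited from $\one{1}_{G_1}$; its class in $\Hom_S(\sH_{G_1},\sH_{G_2})$ is $\varphi_*\circ\id=\varphi_*$. Since an extension in $\Ext^1_{G_1}(\Zp,\pi_1^*\sH_{G_2})$ is determined by its image in $\Hom_S(\sH_{G_1},\sH_{G_2})$ together with its pullback via $e_1^!$, and both $E$ and $\varphi^*\Log{1}_{G_2}$ have vanishing $e_1^!$-pullback (by virtue of their splittings), there is a canonical isomorphism $E\xrightarrow{\sim}\varphi^*\Log{1}_{G_2}$ of extensions with splittings. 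The composition $\Log{1}_{G_1}\to E\xrightarrow{\sim}\varphi^*\Log{1}_{G_2}$ defines $\varphi_\#$, which by construction sends $\one{1}_{G_1}$ to $\one{1}_{G_2}$.

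For uniqueness, any two morphisms of extensions satisfying the splitting condition differ by an element of $\Hom_{G_1}(\Zp,\pi_1^*\sH_{G_2})$ whose $e_1^*$-pullback vanishes, and Lemma \ref{lemma:e-upper-star} applied to the unipotent sheaf $\pi_1^*\sH_{G_2}$ forces this difference to be zero. When $\varphi$ is an isogeny of degree $n$ prime to $p$, the induced map $\varphi_*:\sH_{G_1}\to\sH_{G_2}$ on $p$-adic Tate modules is an isomorphism (composition with the dual isogeny yields multiplication by $n$, a $p$-adic unit), so the pushout map $\Log{1}_{G_1}\to E$ is an isomorphism and hence so is $\varphi_\#$. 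The main technical obstacle is the naturality check that identifies the class of $\varphi^*\Log{1}_{G_2}$ in $\Hom_S(\sH_{G_1},\sH_{G_2})$ with $\varphi_*$, which requires tracing through the five-term sequence under the base change $\varphi$ and the compatibilities among $\pi^!$, $\pi^*$, and the relevant adjunctions.
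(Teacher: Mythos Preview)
Your argument is correct and follows essentially the same route as the paper's: pull back $\Log{1}_{G_2}$, push out $\Log{1}_{G_1}$ along $\pi_1^*(\varphi_*)$, identify the two resulting extensions using the five-term sequence and the splittings at $e_1$, and invoke Lemma~\ref{lemma:e-upper-star} for uniqueness. The only cosmetic difference is that the paper first produces some map $h$ of extensions and then post-composes with the unique automorphism of $(\varphi^*\Log{1}_{G_2},\one{1}_{G_2})$ that corrects the splitting, whereas you build the splitting-compatible isomorphism $E\xrightarrow{\sim}\varphi^*\Log{1}_{G_2}$ directly; your version is slightly more explicit about why the extension classes agree, which the paper leaves implicit.
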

\begin{proof}
Pull-back of $\Log{1}_{G_2}$ gives an exact sequence
\[
0\to \pi_1^{*}\sH_{G_2}\to \varphi^{*}\Log{1}_{G_2}\to \Zp\to 0
\]
and push-out of $\Log{1}_{G_1}$ by $\pi_1^{*}\sH_{G_1}\to\pi_1^{*}\sH_{G_2}$ induces a map 
\begin{equation}
\begin{CD}
0@>>> \pi_1^{*}\sH_{G_1}@>>>\Log{1}_{G_1}@>>> \Zp@>>> 0\\
@.@VVV@VV h V@|\\
0@>>> \pi_1^{*}\sH_{G_2}@>>>\varphi^{*}\Log{1}_{G_2}@>>> \Zp@>>> 0.
\end{CD}
\end{equation}
If $\varphi$ is an isogeny and $\deg\varphi$ is prime to $p$, then $\pi_1^{*}\sH_{G_1}\to\pi_1^{*}\sH_{G_2}$ is an isomorphism, hence also $h$. By uniqueness there is a unique isomorphism of the pair $(\Log{1}_{G_2},e_1^{*}(h)\circ \one{1}_{G_1})$ with $(\Log{1}_{G_2},\one{1}_{G_2})$. The composition of this isomorphism with $h$ is the desired map. If $h':\Log{1}_{G_1}\to \varphi^{!}\Log{1}_{G_2}$ is another map with this property, the difference $h-h':\Zp\to \pi_1^{*}\sH_{G_2}$ is uniquely determined by 
its pull-back $e^{*}(h-h'):\Zp\to e_2^{*}\Log{1}_{G_2}$ according to 
Lemma \ref{lemma:e-upper-star}. If both, $h$ and $h'$ are compatible with the splittings, then $e^{*}(h-h')=0$ and hence $h=h'$.
\end{proof}
\begin{corollary}[Splitting principle]\label{cor:log-1-splitting}
Let $\varphi:G_1\to G_2$ be an isogeny of degree prime to $p$. Then if 
$t:S\to G_1$ is in the kernel of $\varphi$, then
\[
t^*\Log{1}_{G_1}\isom t^{*}\varphi^{*}\Log{1}_{G_2}\isom 
e_1^{*}\varphi^{*}\Log{1}_{G_2}\isom e_1^{*}\Log{1}_{G_1}.
\]
\end{corollary}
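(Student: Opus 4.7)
The plan is to assemble the three claimed isomorphisms directly from Theorem~\ref{thm:log-1-functoriality}; once that theorem is in hand, the corollary is essentially formal manipulation with pullbacks along sections.

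First, the hypothesis that $\varphi$ is an isogeny of degree prime to $p$ means Theorem~\ref{thm:log-1-functoriality} provides a \emph{bona fide} isomorphism
\[
\varphi_\#:\Log{1}_{G_1}\xrightarrow{\sim}\varphi^{*}\Log{1}_{G_2},
\]
normalized so that $e_1^{*}(\varphi_\#)$ carries $\one{1}_{G_1}$ to $\one{1}_{G_2}$ (after identifying $e_1^{*}\varphi^{*}=e_2^{*}$ via $\varphi\circ e_1=e_2$). Applying $t^{*}$ to $\varphi_\#$ produces the first isomorphism
\[
t^{*}\Log{1}_{G_1}\isom t^{*}\varphi^{*}\Log{1}_{G_2}.
\]

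Next, I would observe that the middle identification is purely a statement about the composition of morphisms. Since $\varphi$ is a group scheme homomorphism, $\varphi\circ e_1=e_2$, and the hypothesis $t\in\ker\varphi$ means exactly that $\varphi\circ t=e_2$. Hence $\varphi\circ t=\varphi\circ e_1$ as morphisms $S\to G_2$, and therefore
\[
t^{*}\varphi^{*}\Log{1}_{G_2}=(\varphi\circ t)^{*}\Log{1}_{G_2}=(\varphi\circ e_1)^{*}\Log{1}_{G_2}=e_1^{*}\varphi^{*}\Log{1}_{G_2},
\]
giving the middle isomorphism (in fact an equality of functors). Finally, applying $e_1^{*}$ to $\varphi_\#^{-1}$ yields the last isomorphism $e_1^{*}\varphi^{*}\Log{1}_{G_2}\isom e_1^{*}\Log{1}_{G_1}$, completing the chain.

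There is no serious obstacle here: the entire content of the splitting principle is pushed into Theorem~\ref{thm:log-1-functoriality}, whose isogeny-prime-to-$p$ hypothesis is exactly what is needed to invert the map $\pi_1^{*}\sH_{G_1}\to\pi_1^{*}\sH_{G_2}$ induced by $\varphi$. The only thing one has to be careful about is that one genuinely has $\varphi\circ t=e_2$ (not merely set-theoretically but as $S$-morphisms), which is the definition of $t$ being a section of $\ker\varphi\to S$.
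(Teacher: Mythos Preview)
Your proof is correct and follows the same approach as the paper, which simply says ``Apply $t^{*}$ to $\varphi_\#$.'' You have just spelled out in detail what that terse instruction entails: the first and last isomorphisms come from pulling back the isomorphism $\varphi_\#$ of Theorem~\ref{thm:log-1-functoriality} along $t$ and $e_1$ respectively, and the middle one is the equality $\varphi\circ t=e_2=\varphi\circ e_1$.
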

\begin{proof}
Apply $t^{*}$ to $\varphi_\#$.
\end{proof}
\subsection{The $\Qp$-logarithm sheaf}\label{sec:Qp-logarithm}
We are going to define the $\Qp$-logarithm sheaf, which has been studied extensively in \cite{HuKiPol}.
\begin{definition}
We define
\[
\Log{k}_\Qp:=\Sym^{k}(\Log{1}_\Qp)
\]
and denote by 
\[
\one{k}:=\frac{1}{k!}\Sym^{k}(\one{1}):\Qp\to \Log{k}_\Qp
\]
the splitting induced by $\one{1}$.
\end{definition}
We note that $\Log{k}_\Qp$ is unipotent of length $k$ and 
that the splitting $\one{k}$ induces an isomorphism
\begin{equation}\label{eq:log-splitting}
e^{*}\Log{k}_\Qp\isom \prod_{i=0}^{k}\Sym^{i}\sH_\Qp.
\end{equation} 
To define 
transition maps 
\begin{equation}
\Log{k}_\Qp\to \Log{k-1}_\Qp
\end{equation}
consider the morphism $\Log{1}_\Qp\to \Qp\oplus\Log{1}_\Qp$ given by the canonical projection and the identity. Then we have
\begin{multline*}
\Log{k}_\Qp=\Sym^{k}(\Log{1}_\Qp)\to \Sym^{k}(\Qp\oplus\Log{1}_\Qp)\isom
\bigoplus_{i+j=k}\Sym^{i}(\Qp)\otimes\Sym^{j}(\Log{1}_\Qp)\to\\
\to
\Sym^{1}(\Qp)\otimes\Sym^{k-1}(\Log{1}_\Qp)\isom \Log{k-1}_\Qp.
\end{multline*}
A straightforward computation shows that $\one{k}\mapsto \one{k-1}$ under this transition map. 
\subsection{Main properties of the $\Qp$-logarithm sheaf}
The logarithm sheaf has three main properties: functoriality, 
vanishing of cohomology and  a universal mapping property for
unipotent sheaves. Functoriality follows trivially from Theorem \ref{thm:log-1-functoriality}. We review the others briefly, referring for more details to \cite{HuKiPol}.

Let $\varphi:G_1\to G_2$
be a homomorphism of group schemes of relative 
dimension $d_1$, $d_2$, respectively and let  $c:=d_1-d_2$ be the relative dimension of the homomorphism.
\begin{theorem}[Functoriality] \label{thm:functoriality} For 
 $\varphi:G_1\to G_2$ as above there is a unique
homomorphism of sheaves
\[
\varphi_\#:\cLog_{\Qp,G_1}\to \varphi^{*}\cLog_{\Qp,G_2}\isom \varphi^!\cLog_{\Qp,G_2}(-c)[-2c]
\]
such that $\mathbf{1}_{G_1}$ maps to $\mathbf{1}_{G_2}$. Moreover, if $\varphi$ is an isogeny, the $\varphi_\#$ is an 
isomorphism. 
\end{theorem}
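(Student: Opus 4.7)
The plan is to bootstrap from Theorem \ref{thm:log-1-functoriality} by applying symmetric powers at each level $k$ and assembling the results into a morphism of pro-sheaves $\cLog_{\Qp,G_i} = (\Log{k}_{\Qp,G_i})_{k \ge 0}$. The identification $\varphi^* \isom \varphi^!(-c)[-2c]$ in the statement is automatic from Lemma \ref{lem:unipotent_uppershriek} applied at each level. Concretely, tensoring the morphism of Theorem \ref{thm:log-1-functoriality} with $\Qp$ produces $\varphi_\#^{(1)}: \Log{1}_{\Qp,G_1} \to \varphi^* \Log{1}_{\Qp,G_2}$ with $\one{1}_{G_1} \mapsto \one{1}_{G_2}$; since $\Sym^k$ commutes with the exact pullback $\varphi^*$, setting $\varphi_\#^{(k)} := \Sym^k(\varphi_\#^{(1)})$ gives
\[
\varphi_\#^{(k)}: \Log{k}_{\Qp,G_1} \longrightarrow \varphi^* \Log{k}_{\Qp,G_2},
\]
and the normalization $\one{k} = \tfrac{1}{k!}\Sym^k(\one{1})$ guarantees that $\one{k}_{G_1}$ maps to $\one{k}_{G_2}$.

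To turn the family $\{\varphi_\#^{(k)}\}$ into a morphism of pro-sheaves, I would verify compatibility with the transition maps $\Log{k} \to \Log{k-1}$ from Section \ref{sec:Qp-logarithm}. These are built from the canonical projection $\Log{1}_\Qp \to \Qp$ via symmetric-power manipulations, and $\varphi_\#^{(1)}$ is a morphism of short exact sequences inducing the identity on the $\Qp$-quotient, so the required squares between adjacent levels commute functorially.

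For uniqueness among splitting-respecting morphisms, I would argue level by level. Both $\Log{k}_{\Qp,G_1}$ and $\varphi^* \Log{k}_{\Qp,G_2}$ are unipotent on $G_1$ (pullback preserves unipotence, since the graded pieces $\pi_2^* \sG$ of the latter pull back to $\pi_1^* \sG$ along $\pi_2 \circ \varphi = \pi_1$), so via internal Hom and tensor-duality $\Hom_{G_1}(\Log{k}_{\Qp,G_1}, \varphi^* \Log{k}_{\Qp,G_2})$ rewrites as $\Hom_{G_1}(\Qp, \mathcal{F})$ for a unipotent $\mathcal{F}$ on $G_1$. Lemma \ref{lemma:e-upper-star} then reduces uniqueness to uniqueness after $e_1^*$, which combined with the splitting \eqref{eq:log-splitting} and induction on $k$ — the base case $k=1$ being exactly Theorem \ref{thm:log-1-functoriality} — forces two candidates to coincide. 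For the isogeny case, $\varphi_*: \sH_{G_1,\Qp} \to \sH_{G_2,\Qp}$ is an isomorphism since $\ker\varphi$ is finite and killed by $\otimes\Qp$; the five lemma then makes $\varphi_\#^{(1)}$ an isomorphism, $\Sym^k$ preserves this, and the pro-system of isomorphisms induces an isomorphism of $\cLog$. The main obstacle I anticipate is uniqueness: Lemma \ref{lemma:e-upper-star} is stated for $\Hom(\Lambda, -)$ rather than for general Hom, so one must first check that internal Homs of unipotent sheaves remain unipotent before applying it — everything else reduces mechanically to the level-one case.
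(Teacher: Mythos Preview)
Your proposal is correct and follows essentially the same approach as the paper: the paper's proof is a terse two-line pointer stating that the result follows directly from Theorem~\ref{thm:log-1-functoriality} (applied via $\Sym^k$) together with the observation that $\deg\varphi$ is invertible in $\Qp$, and you have simply unpacked what ``follows directly'' means. Your treatment of uniqueness via Lemma~\ref{lemma:e-upper-star} and internal Homs is more elaborate than strictly necessary---one could alternatively invoke the universal property (Theorem~\ref{thm:sheaf-univ-property}) or argue inductively on the unipotent filtration as in the proof of Theorem~\ref{thm:log-1-functoriality}---but your route is sound and the obstacle you flag (unipotence of internal Homs) is genuine and easily handled.
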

\begin{proof}
This follows
directly from Theorem \ref{thm:log-1-functoriality} and the fact that
$\deg\varphi$ is invertible in $\Qp$.
\end{proof}
\begin{corollary}[Splitting principle]\label{cor:splitting_sheaf}
Let $\varphi:G_1\to G_2$ be an isogeny. Then if 
$t:S\to G_1$ is in the kernel of $\varphi$, one has
\[
\varrho_t:t^*\cLog_{\Qp,G_1}\isom t^{*}\varphi^{*}\cLog_{\Qp,G_2}\isom 
e_1^{*}\varphi^{*}\cLog_{\Qp,G_2}\isom e_1^{*}\cLog_{\Qp,G_1}\isom
\prod_{k\ge 0}\Sym^k\sH_{\Qp,G_1}.
\]
More generally, if $\iota:\ker\varphi\to G_1$ is the closed immersion, one has 
\[
\iota^{*}\cLog_{G_1}\isom \pi\mid_{\ker\varphi}^{*}\prod_{k\ge 0}\Sym^k\sH_{\Qp,G_1},
\]
where $\pi\mid_{\ker\varphi}:\ker\varphi\to S$ is the structure map.
\end{corollary}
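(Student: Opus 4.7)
The strategy is essentially to read off the claim from the functoriality isomorphism $\varphi_\#$ of Theorem \ref{thm:functoriality}, combined with the splitting isomorphism $e^*\cLog_{\Qp} \isom \prod_{k \ge 0} \Sym^k \sH_{\Qp}$ induced by $\mathbf{1}$. Since $\varphi$ is an isogeny, $\deg \varphi$ is invertible in $\Qp$, so $\varphi_\#: \cLog_{\Qp,G_1} \xrightarrow{\sim} \varphi^* \cLog_{\Qp,G_2}$ is an isomorphism.

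For the first statement, the plan is as follows. First, apply $t^*$ to $\varphi_\#$ to obtain the initial isomorphism $t^*\cLog_{\Qp,G_1}\isom t^*\varphi^*\cLog_{\Qp,G_2}$. Next, observe that the hypothesis $t \in \ker\varphi$ means $\varphi \circ t = e_2 \circ \pi_1 \circ t = e_2$, since $\pi_1 \circ t = \id_S$. Since $\varphi$ is a group homomorphism we also have $\varphi \circ e_1 = e_2$, and therefore
\[
t^* \varphi^* \cLog_{\Qp,G_2} = e_2^* \cLog_{\Qp,G_2} = e_1^* \varphi^* \cLog_{\Qp,G_2}.
\]
Now apply $e_1^*$ to $\varphi_\#^{-1}$ to obtain $e_1^* \varphi^* \cLog_{\Qp,G_2} \isom e_1^* \cLog_{\Qp,G_1}$, and finally use the splitting \eqref{eq:log-splitting} (extended over all $k$, as in \cite{HuKiPol}) to identify $e_1^* \cLog_{\Qp,G_1}$ with $\prod_{k \ge 0} \Sym^k \sH_{\Qp, G_1}$.

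For the more general statement, the argument is nearly identical with $\iota: \ker \varphi \to G_1$ in place of $t$. The key observation is the factorization $\varphi \circ \iota = e_2 \circ \pi|_{\ker\varphi}$, which gives
\[
\iota^* \varphi^* \cLog_{\Qp,G_2} = (\pi|_{\ker\varphi})^* e_2^* \cLog_{\Qp,G_2} \isom (\pi|_{\ker\varphi})^* \prod_{k \ge 0} \Sym^k \sH_{\Qp, G_2}.
\]
Applying $\iota^*$ to $\varphi_\#$ yields $\iota^* \cLog_{\Qp,G_1} \isom \iota^* \varphi^* \cLog_{\Qp,G_2}$, and the isogeny $\varphi$ induces an isomorphism $\sH_{\Qp, G_1} \isom \sH_{\Qp, G_2}$ on $\Qp$-coefficients (functoriality of $R^{-1}\pi_!\pi^!$ applied to $\varphi$, which is an isomorphism because $\deg\varphi$ is invertible in $\Qp$), allowing us to replace $\sH_{\Qp, G_2}$ by $\sH_{\Qp, G_1}$ on the right hand side.

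Since every step is either a direct application of Theorem \ref{thm:functoriality}, the elementary identity $\varphi \circ t = e_2$, or the defining splitting of $\cLog$, there is no real obstacle; the only point requiring any care is keeping track of which base scheme each sheaf lives on and checking that the two natural identifications of $\sH_{\Qp, G_1}$ with $\sH_{\Qp, G_2}$ (via $\varphi_*$ on homology and via $\varphi_\#$ on the graded pieces of $\cLog$) agree, which is immediate from the construction of $\Log{1}$ in \eqref{eq:Log-1}.
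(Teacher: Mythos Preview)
Your argument is correct and, for the first statement, coincides with the paper's proof: apply $t^*$ to the isomorphism $\varphi_\#$ of Theorem~\ref{thm:functoriality} and invoke the splitting \eqref{eq:log-splitting}. For the second statement the paper takes a slightly different tack: it base-changes everything to $T=\ker\varphi$ and applies the first statement to the tautological section $\tau:T\to G_{1,T}$, which directly yields $\iota^*\cLog_{\Qp,G_1}\isom(\pi|_{\ker\varphi})^*\prod_{k\ge 0}\Sym^k\sH_{\Qp,G_1}$ without ever passing through $\sH_{\Qp,G_2}$. Your direct route via the factorization $\varphi\circ\iota=e_2\circ\pi|_{\ker\varphi}$ is equally valid but forces you to supply the extra identification $\sH_{\Qp,G_1}\isom\sH_{\Qp,G_2}$ and check its compatibility with $\varphi_\#$; the base-change argument sidesteps this bookkeeping.
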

\begin{proof}
Apply $t^{*}$ to both sides of the isomorphism $\varphi_{\#}$ and use \eqref{eq:log-splitting}. For the second statement make the base change
to $\ker\varphi$ and apply the first statement to the tautological section
of $\ker\varphi$.
\end{proof}
\begin{theorem}[Vanishing of cohomology]\label{thm:vanishing}
One has 
\[
R^i\pi_!\cLog_\Qp \isom \begin{cases}
\Qp(-d) &\mbox{if } i=2d\\
0 & \mbox{if }i\neq 2d.
\end{cases}
\]
More precisely, the transition maps $R^i\pi_!\Log{k}_\Qp \to R^i\pi_!\Log{k-1}_\Qp$ are zero for $i<2d$ and one has an
isomorphism $R^{2d}\pi_!\Log{k}_\Qp\isom \Qp(-d)$ compatible with the transition maps.  
\end{theorem}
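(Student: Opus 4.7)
The plan is to argue by induction on $k$, using the short exact sequence
\[
0 \to \pi^*\Sym^k\sH_\Qp \to \Log{k}_\Qp \to \Log{k-1}_\Qp \to 0
\]
that arises from the symmetric power construction of $\Log{k}_\Qp = \Sym^k(\Log{1}_\Qp)$ applied to the defining extension $0 \to \pi^*\sH_\Qp \to \Log{1}_\Qp \to \Qp \to 0$; the quotient map of this sequence is the transition morphism defined in Section~\ref{sec:Qp-logarithm}. The base case $k=0$ reduces to the Hopf algebra identification~\eqref{eq:Hopf-alg}, which gives $R^i\pi_!\Qp = \bigwedge^{2d-i}\sH_\Qp(-d)$ and in particular $R^{2d}\pi_!\Qp = \Qp(-d)$.

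For the inductive step I apply $R\pi_!$ to the sequence above. The projection formula combined with~\eqref{eq:Hopf-alg} yields
\[
R^i\pi_!(\pi^*\Sym^k\sH_\Qp) \cong \Sym^k\sH_\Qp \otimes \bigwedge^{2d-i}\sH_\Qp(-d),
\]
so the heart of the proof is the analysis of the connecting morphism $\partial_k^i$ in the resulting long exact sequence. The key claim is that, collected over all $k$ and all $i$, these connecting maps assemble into the Koszul complex on the free module $\sH_\Qp$, with differentials
\[
\Sym^{k-1}\sH_\Qp \otimes \bigwedge^{j}\sH_\Qp \to \Sym^k\sH_\Qp \otimes \bigwedge^{j-1}\sH_\Qp
\]
given by contraction with the canonical element of $\sH_\Qp \otimes \sH_\Qp^\vee$ corresponding to the identity. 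This is dictated by Definition~\ref{defn:log}: the extension class of $\Log{1}_\Qp$ is, by construction, the identity in $\Hom_S(\sH,\sH)$, and the higher connecting morphisms arise as iterated cup products with this class, propagated through the symmetric power construction.

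Granted this identification, the full bigraded system is governed by the Koszul complex, which in characteristic zero is acyclic in positive total degree and equals $\Qp$ in total degree zero. Injectivity of $\partial_k^i$ for $i<2d$ then forces, via exactness of the long exact sequence, the transition map $R^i\pi_!\Log{k}_\Qp \to R^i\pi_!\Log{k-1}_\Qp$ to vanish in that range. In top degree $i=2d$ the surjectivity of the Koszul multiplication $\sH_\Qp \otimes \Sym^{k-1}\sH_\Qp \to \Sym^k\sH_\Qp$ kills the contribution from $R^{2d}\pi_!(\pi^*\Sym^k\sH_\Qp)$, so the transition map on $R^{2d}\pi_!\Log{\bullet}_\Qp$ is an isomorphism, compatibly yielding $\Qp(-d)$ at every level. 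The main obstacle will be the explicit identification of the connecting morphisms with Koszul differentials: this demands a careful tracking of the extension class of $\Log{1}_\Qp$ through the symmetric powers defining $\Log{k}_\Qp$ together with a compatibility check between this extension class and the cup-product structure on $R\pi_!$; once this identification is in place, the remainder of the argument is formal homological algebra in characteristic zero.
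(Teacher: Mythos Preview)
The paper does not give a proof here: it simply cites \cite{HuKiPol}, Theorem~3.3.1. Your outline is the classical Koszul--spectral-sequence argument that goes back to Beilinson--Levin and Wildeshaus in the elliptic case, and it is almost certainly the argument (or close to it) carried out in the cited reference. So the strategy is sound.

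There is, however, a genuine imprecision in the way you phrase the key step. You write that the connecting morphisms $\partial_k^i$ of the short exact sequence
\[
0 \to \pi^*\Sym^k\sH_\Qp \to \Log{k}_\Qp \to \Log{k-1}_\Qp \to 0
\]
``assemble into the Koszul complex'' with differentials $\Sym^{k-1}\sH_\Qp\otimes\bigwedge^j\sH_\Qp \to \Sym^k\sH_\Qp\otimes\bigwedge^{j-1}\sH_\Qp$. But the source of $\partial_k^i$ is $R^i\pi_!\Log{k-1}_\Qp$, which is \emph{not} $\Sym^{k-1}\sH_\Qp\otimes\bigwedge^{2d-i}\sH_\Qp(-d)$: it carries a full $(k-1)$-step filtration with all the lower symmetric powers as graded pieces. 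The maps that are literally Koszul differentials are the $d_1$-differentials of the spectral sequence attached to the \emph{entire} filtration $F^\bullet$ on $\Log{k}_\Qp$, i.e.\ the connecting maps of the two-step extensions $0\to\pi^*\Sym^{j}\sH_\Qp\to F^{j-1}/F^{j+1}\to\pi^*\Sym^{j-1}\sH_\Qp\to 0$. To salvage your formulation you must either (a) run the spectral sequence of the full filtration, identify $d_1$ with the Koszul differential, and use acyclicity of Koszul in characteristic zero to get $E_2=E_\infty$ concentrated in the right spot; or (b) prove inductively that for $i<2d$ the surjection $\Sym^{k-1}\sH_\Qp\otimes\bigwedge^{2d-i}\sH_\Qp(-d)\twoheadrightarrow R^i\pi_!\Log{k-1}_\Qp$ (which you get from the inductive hypothesis that the previous transition vanishes) intertwines $\partial_k^i$ with the Koszul map, and then deduce injectivity of $\partial_k^i$ from Koszul exactness. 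Both routes work; neither is quite what you wrote.

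It is also worth noting that the paper itself, for the integral analogue (Theorem~\ref{thm:coh-computation-of-cL}), uses a completely different and more robust method: functoriality under the isogenies $[p^r]$ forces the higher $R^i\pi_!$ to be Mittag-Leffler zero, with no Koszul combinatorics at all. That argument transposes verbatim to the $\Qp$-setting using $[N]$ for $N$ prime to $p$ (the action of $[N]_!$ on $R^i\pi_!\Log{k}_\Qp$ is invertible by Theorem~\ref{thm:functoriality}, yet on each graded piece $\Sym^j\sH_\Qp\otimes\bigwedge^{2d-i}\sH_\Qp$ it is multiplication by $N^{j+2d-i}$, which for $i<2d$ varies with $j$ and forces everything to vanish after passing to the limit). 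This bypasses the delicate extension-class bookkeeping entirely and is arguably the cleaner proof in the present generality.
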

\begin{proof}
This is Theorem 3.3.1. in \cite{HuKiPol}.
\end{proof}
Let $\sF$
be a unipotent sheaf of finite length $n$ on $G$. Consider the homomorphism
\begin{equation}\label{eq:universal-property-map}
\pi_*\underline{\Hom}_G(\cLog_\Qp,\sF)\to e^*\sF
\end{equation}
defined as the composition of 
\[
\pi_*\underline{\Hom}_G(\cLog_\Qp,\sF)\to \pi_*e_*e^{*}\underline{\Hom}_G(\cLog_\Qp,\sF)
\to \underline{\Hom}_S(e^*\cLog_\Qp,e^*\sF)
\] 
with
\[
\underline{\Hom}_S(e^*\cLog_\Qp,e^*\sF)\xrightarrow{(\mathbf{1})^{*}}
\underline{\Hom}_S(\Qp,e^*\sF)\isom e^*\sF.
\]
\begin{theorem}[Universal property]\label{thm:sheaf-univ-property}
Let $\sF$ be a unipotent sheaf of finite length. Then the  map \eqref{eq:universal-property-map} induces an isomorphism
\[
\pi_*\underline\Hom(\cLog_\Qp,\sF)\isom e^*\sF.
\]
\end{theorem}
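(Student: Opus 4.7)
The strategy is to establish the stronger derived statement
\[
R\pi_*\, R\underline{\Hom}_G(\cLog_\Qp, \sF) \cong e^*\sF,
\]
concentrated in degree zero, from which the theorem follows by extracting $H^0$. The argument has two parts: a base case $\sF = \pi^*\sG$ handled via the six-functor adjunction together with the cohomological vanishing Theorem \ref{thm:vanishing}, and a d\'evissage on the unipotent length.

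For the base case, Lemma \ref{lem:unipotent_uppershriek} yields $\pi^*\sG \cong \pi^!\sG(-d)[-2d]$, so the adjunction $(R\pi_!, \pi^!)$ gives
\[
R\pi_*\, R\underline{\Hom}_G(\cLog_\Qp, \pi^*\sG) \cong R\underline{\Hom}_S(R\pi_!\cLog_\Qp, \sG)(-d)[-2d].
\]
By Theorem \ref{thm:vanishing} one has $R\pi_!\cLog_\Qp \cong \Qp(-d)[-2d]$, so the right-hand side collapses to $\sG$ in degree zero. To identify this derived isomorphism with the map \eqref{eq:universal-property-map}, I would trace the natural map $\sG \cong \pi_*\pi^*\sG \to \pi_*\underline{\Hom}_G(\cLog_\Qp, \pi^*\sG)$ built from the augmentation $\cLog_\Qp \to \Qp$ (obtained from $\Log{1}_\Qp \to \Qp$ by symmetric powers): after restricting to $e$ and composing with $\one{k}$, the fact that $\one{k}$ splits the augmentation forces the round trip $\sG \to \sG$ to be the identity, so the two maps agree.

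For the d\'evissage, given $\sF$ unipotent of length $n$, pick the first filtration piece $\sF^1 \subset \sF$ with $\sF/\sF^1 \cong \pi^*\sG^0$ and $\sF^1$ of length $n-1$. Applying $R\pi_*\, R\underline{\Hom}_G(\cLog_\Qp, -)$ to $0 \to \sF^1 \to \sF \to \pi^*\sG^0 \to 0$ yields an exact triangle whose outer terms, by the inductive hypothesis and the base case, are concentrated in degree zero and identify naturally with $e^*\sF^1$ and $e^*\pi^*\sG^0$. Consequently the middle term is also concentrated in degree zero, and a five-lemma argument using the naturality of \eqref{eq:universal-property-map} identifies the resulting extension with $e^*\sF$ (which is exact since $e^*$ is exact).

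The chief technical obstacle is the pro-system nature of $\cLog_\Qp = (\Log{k}_\Qp)_k$: $\underline{\Hom}_G(\cLog_\Qp, \sF)$ must be interpreted as the colimit $\varinjlim_k \underline{\Hom}_G(\Log{k}_\Qp, \sF)$, and the adjunction step in the base case must be performed compatibly with this colimit and with the transition maps in Theorem \ref{thm:vanishing}. Since for $\sF$ unipotent of length $n$ only $\Log{k}_\Qp$ with $k \geq n$ contribute effectively, this should not cause genuine difficulty, but it is where care is needed in the bookkeeping.
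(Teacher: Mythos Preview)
The paper does not give a proof of this theorem; it simply cites Theorem~3.3.2 of \cite{HuKiPol}. Your proposal is therefore not comparable to an argument in the present paper, but it is a correct and natural strategy, and it is essentially the standard one: establish the derived statement for sheaves pulled back from $S$ using the adjunction $(R\pi_!,\pi^!)$ together with the vanishing Theorem~\ref{thm:vanishing}, and then run a d\'evissage on the unipotent length, using the vanishing of the higher derived pieces at each step to keep the long exact sequence short.

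Two points deserve the care you already flag. First, the pro-system issue is genuine: for a fixed $k$ the complex $R\pi_!\Log{k}_\Qp$ is not concentrated in degree $2d$, so $R\underline{\Hom}_S(R\pi_!\Log{k}_\Qp(d)[2d],\sG)$ is not concentrated in degree $0$; what saves you is that the transition maps $R^i\pi_!\Log{k}_\Qp\to R^i\pi_!\Log{k-1}_\Qp$ vanish for $i<2d$, so after passing to the colimit in $k$ only the degree-$2d$ piece survives. Second, your identification of the abstract adjunction isomorphism with the concrete map \eqref{eq:universal-property-map} is only sketched in one direction (you show the composite $\sG\to\pi_*\underline{\Hom}(\cLog_\Qp,\pi^*\sG)\to\sG$ is the identity); to finish, either check the other composite directly, or invoke that both sides are already known to be abstractly isomorphic so a one-sided inverse is automatically two-sided. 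Neither of these is a gap in the argument, just places where the write-up should be explicit.
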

\begin{proof}
This is Theorem 3.3.2. in \cite{HuKiPol}.
\end{proof}
\section{The $\Qp$-polylogarithm and Eisenstein classes}
\subsection{Construction of the $\Qp$-polylogarithm}\label{sec:Qp-polylog}
Fix an auxiliary  integer $c>1$ invertible on $S$ and consider the $c$-torsion subgroup $D:=G[c]\subset G$. We write $U_D:=G\setminus D$ and consider
\[
U_D\xrightarrow{j_D}G\xleftarrow{\iota_D}D.
\]
We also write $\pi_D:D\to S$ for the structure map. 

For any sheaf $\sF$ the localization triangle defines a connecting homomorphism
\begin{equation}\label{eq:localization-seq}
R\pi_!Rj_{D*}j_D^{*}\sF[-1]\to R\pi_!\iota_{D!}\iota_D^{!}\sF.
\end{equation}
As $\Log{k}_\Qp(d)[2d]$ is unipotent we may use  Lemma \ref{lem:unipotent_uppershriek} to replace 
$\iota_D^{!}$ by $\iota_D^{*}$. Using Corollary \ref{cor:splitting_sheaf} one gets
\[
\pi_{D!}\iota_D^{!}\Log{k}_\Qp(d)[2d]\isom \prod_{i=0}^{k}
\pi_{D!}\pi_D^{*}\Sym^{i}\sH_\Qp.
\]
Putting everything together and taking the limit over 
the transition maps $\Log{k}_\Qp\to \Log{k-1}_\Qp$ gives the \emph{residue  map}
\begin{equation}
\res: H^{2d-1}(S,R\pi_!Rj_{D*}j_D^{*}\cLog_\Qp(d))\to
H^{0}(S,\prod_{k\ge 0}
\pi_{D!}\pi_D^{*}\Sym^{k}\sH_\Qp).
\end{equation}
\begin{proposition}
The localization triangle induces  a short exact sequence
\[
0\to H^{2d-1}(S,R\pi_!Rj_{D*}j_D^{*}\cLog_\Qp(d))\xrightarrow{\res}
H^{0}(S,\prod_{k\ge 0}
\pi_{D!}\pi_D^{*}\Sym^{k}\sH_\Qp)\to 
H^{0}(S,\Qp)\to 0.
\]
\end{proposition}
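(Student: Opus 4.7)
The plan is to apply $R\pi_!$ to the localization triangle
\[
\iota_{D!}\iota_D^!\cLog_\Qp(d)\to \cLog_\Qp(d)\to Rj_{D*}j_D^*\cLog_\Qp(d)\xrightarrow{+1}
\]
and to read off the claimed short exact sequence from a small portion of the long exact sequence of $S$-cohomology. The key inputs are Theorem \ref{thm:vanishing}, Lemma \ref{lem:unipotent_uppershriek}, the splitting principle of Corollary \ref{cor:splitting_sheaf}, and the fact that the unit section $e$ factors through $D=G[c]$.

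First I would identify the two outer terms. By Theorem \ref{thm:vanishing}, $R\pi_!\cLog_\Qp(d)\isom \Qp[-2d]$, hence $H^{2d-1}(S,R\pi_!\cLog_\Qp(d))=0$ and $H^{2d}(S,R\pi_!\cLog_\Qp(d))=H^0(S,\Qp)$. Since $\iota_D:D\to G$ lies between $S$-schemes of relative dimension $0$ and $d$ and $\cLog_\Qp$ is pro-unipotent, Lemma \ref{lem:unipotent_uppershriek} gives $\iota_D^!\cLog_\Qp(d)\isom \iota_D^*\cLog_\Qp[-2d]$. The splitting principle applied to the isogeny $[c]:G\to G$, whose kernel is $D$, yields
\[
\iota_D^*\cLog_\Qp\isom \pi_D^*\prod_{k\ge 0}\Sym^k\sH_\Qp.
\]
As $\pi_D$ is finite \'etale, $R\pi_{D!}=\pi_{D!}$ is exact, so
\[
R\pi_!\iota_{D!}\iota_D^!\cLog_\Qp(d)\isom \pi_{D!}\pi_D^*\textstyle\prod_{k\ge 0}\Sym^k\sH_\Qp\,[-2d],
\]
concentrated in cohomological degree $2d$. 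Plugging this into the long exact sequence, the vanishing $H^{2d-1}(S,R\pi_!\cLog_\Qp(d))=0$ forces $\res$ to be injective, and the remaining task is to show that the next map
\[
\tau:H^0(S,\pi_{D!}\pi_D^*\textstyle\prod_{k\ge 0}\Sym^k\sH_\Qp)\to H^0(S,\Qp),
\]
induced by the counit $\iota_{D!}\iota_D^!\to \mathrm{id}$, is surjective.

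For surjectivity, I would argue that $\tau$ factors as projection onto the $k=0$ summand followed by the trace $\pi_{D!}\Qp\to\Qp$. This is because the isomorphism $R^{2d}\pi_!\cLog_\Qp(d)\isom\Qp$ of Theorem \ref{thm:vanishing} is induced by the quotient $\cLog_\Qp\to\Qp$ coming from the unipotent filtration, and the above identification of $\iota_D^*\cLog_\Qp$ intertwines this quotient with the projection onto $\Sym^0=\Qp$ (the compatibility is forced on the unit section by Lemma \ref{lemma:e-upper-star} and then propagated to all of $D$ by the splitting principle). Now $e:S\to G$ takes values in $D=G[c]$ and is therefore a section of the finite \'etale map $\pi_D$; this splits $D=e(S)\sqcup D'$ and makes $\Qp\to \pi_{D*}\Qp$ induced by $e$ a section of the trace, which is therefore surjective. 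The main obstacle is the coherence statement in the last paragraph: tracking that the top graded quotient of $\cLog_\Qp$, the splitting principle, and the Gysin/trace map fit together compatibly, which ultimately rests on the uniqueness statement for morphisms from the trivial sheaf into $\cLog_\Qp$ encoded in Lemma \ref{lemma:e-upper-star}.
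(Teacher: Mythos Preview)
Your proof is correct and follows exactly the route the paper indicates in its one-line argument (localization triangle plus Theorem \ref{thm:vanishing}); you have simply spelled out the details, including the surjectivity of the last map via the unit section, which the paper leaves implicit. One minor simplification: for surjectivity you only need that the restriction of $\tau$ to the $k=0$ summand $H^0(S,\pi_{D!}\Qp)$ is the trace $\pi_{D!}\Qp\to\Qp$ (which follows by applying the localization triangle to the compatible quotient $\cLog_\Qp\to\Log{0}_\Qp=\Qp$), not the full factorization of $\tau$ through the $k=0$ projection, so the coherence issue you flag at the end is not actually needed.
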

\begin{proof}
This is an immediate consequence from the localization triangle and the computation of $R\pi_!\cLog$ in Theorem \ref{thm:vanishing}.
\end{proof}
\begin{definition}
Let
\[
\Qp[D]^{0}:=\ker(H^{0}(S,\pi_{D!}\Qp)\to H^{0}(S,\Qp))
\]
where the map is induced by the trace $\pi_{D!}\Qp\to \Qp$.
\end{definition}
Note that 
\[
\Qp[D]^{0}\subset \ker \left(H^{0}(S,\prod_{k\ge 0}
\pi_{D!}\pi_D^{*}\Sym^{k}\sH_\Qp)\to 
H^{0}(S,\Qp)
\right).
\]
\begin{definition}
Let $\alpha\in \Qp[D]^{0}$. Then the unique class
\[
_{\alpha}\cpol_\Qp\in H^{2d-1}(S,R\pi_!Rj_{D*}j_D^{*}\cLog_\Qp(d))
\]
with $\res({_\alpha}\cpol_\Qp)=\alpha$ is called the \emph{polylogarithm class associated to $\alpha$}. We write
$_{\alpha}\cpol_\Qp^{k}$ for the image of $_{\alpha}\cpol_\Qp$ in
$H^{2d-1}(S,R\pi_!Rj_{D*}j_D^{*}\Log{k}_\Qp(d))$.
\end{definition}

\subsection{Eisenstein classes}\label{sec:Qp-Eisenstein-classes}
Recall that $D=G[c]$ and fix an integer $N>1$ invertible on $S$, such that $(N,c)=1$ and let $t:S\to U_D=G\setminus D$ be an $N$-torsion section. Consider the composition
\begin{equation}\label{eq:t-evaluation}
R\pi_!Rj_{D*}j_D^{*}\cLog(d)\to R\pi_!Rj_{D*}j_D^{*}Rt_*t^{*}\cLog(d)
\isom R\pi_!Rt_*t^{*}\cLog(d)\isom t^{*}\cLog(d)
\end{equation}
induced by the adjunction $\id\to Rt_*t^{*}$, the fact that
$Rt_*=Rt_!$ and because $\pi\circ t=\id$. Together with the splitting 
principle from Corollary \ref{cor:splitting_sheaf} and the projection to the $k$-th component one gets an evaluation map 
\begin{equation}\label{eq:evaluation}
H^{2d-1}(S,R\pi_!Rj_{D*}j_D^{*}\Log{k}_\Qp(d))\xrightarrow{\varrho_t\circ t^{*}} H^{2d-1}(S,\prod_{i= 0}^{k}\Sym^{i}\sH_\Qp) \xrightarrow{\pr_k} H^{2d-1}(S,\Sym^{k}\sH_\Qp) .
\end{equation}
\begin{definition}\label{def:Qp-Eisenstein-class} Let $\alpha\in \Qp[D]$. 
The image of $_\alpha\cpol_\Qp$ under the evaluation map \eqref{eq:evaluation}
\[
_\alpha\Eis^{k}_{\Qp}(t)\in H^{2d-1}(S,\Sym^{k}\sH_\Qp) 
\]
is called the \emph{$k$-th \'etale $\Qp$-Eisenstein class for $G$}.
\end{definition}
\begin{remark}
The normalization in \cite[Definition 12.4.6]{Kings-Soule} is different. There we had an additional factor of $-N^{k-1}$ in front
of $_\alpha\Eis^{k}_{\Qp}(t)$. This has the advantage to make the residues of $_\alpha\Eis^{k}_{\Qp}(t)$ at the cusps integral, but is very unnatural from the point of view of the polylogarithm.
\end{remark}
Recall from \cite[Theorem 5.2.1]{HuKiPol} that the polylogarithm ${_\alpha}\cpol^{k}_\Qp$ is  
motivic, i.e., there exists a  class in motivic cohomology
\[
_\alpha\cpol_{\mot}^{k}\in H_\mot^{2d-1}(S,R\pi_!Rj_{D*}j_D^{*}\Log{k}_\mot(d)),
\]
the \emph{motivic polylogarithm},
which maps to $_\alpha\cpol^{k}{_\Qp}$ under the \'etale regulator  
\[
r_\et:H_\mot^{2d-1}(S,R\pi_!Rj_{D*}j_D^{*}\Log{k}_\mot(d))\to
H^{2d-1}(S,R\pi_!Rj_{D*}j_D^{*}\Log{k}_\Qp(d)).
\]
With the motivic  analogue of the evaluation map \eqref{eq:evaluation}
one can define exactly in the same way as in the \'etale case motivic 
Eisenstein classes for $\alpha\in \QQ[D]^{0}$
\begin{equation}
_\alpha\Eis_\mot^{k}(t)\in H^{2d-1}_\mot(S,\Sym^{k}\sH_\QQ). 
\end{equation}
The next proposition is obvious from the fact that the evaluation
map is compatible with the \'etale regulator.
\begin{proposition}
For $\alpha\in \QQ[D]^{0}$ the image of the motivic Eisenstein class
$_\alpha\Eis_\mot^{k}(t)$ under the \'etale  regulator
\[
r_\et:H^{2d-1}_\mot(S,\Sym^{k}\sH_\QQ)\to H^{2d-1}(S,\Sym^{k}\sH_\Qp)
\]
is the \'etale $\Qp$-Eisenstein class $_\alpha\Eis^{k}_{\Qp}(t)$.
\end{proposition}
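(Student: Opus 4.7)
The plan is to reduce the proposition to the two inputs that have already been established: (i) the regulator compatibility $r_\et({_\alpha}\cpol^{k}_\mot) = {_\alpha}\cpol^{k}_\Qp$ recalled just before the statement, and (ii) the fact that the evaluation map \eqref{eq:evaluation} is a composition of operations (adjunction, proper base change, pullback by $t$, and the splitting isomorphism $\varrho_t$ of Corollary \ref{cor:splitting_sheaf}) which are all defined in the same way in motivic and étale cohomology and for which the étale regulator is a natural transformation.

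More concretely, I would first unpack the evaluation map step by step. The adjunction morphism $\id \to Rt_*t^*$, the identification $Rt_* = Rt_!$ (because $t$ is a closed immersion), the identification $\pi \circ t = \id$, and the projection onto the $k$-th factor are all instances of the six-functor formalism, which exists in a compatible way on both the motivic and the $\Qp$-étale side. The splitting principle is a purely formal consequence of the functoriality of $\cLog^{(1)}$ (Theorem \ref{thm:log-1-functoriality}), which likewise holds motivically. Thus at each stage one has a commutative square with $r_\et$ going downward and the relevant structure map going across.

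Stringing these squares together yields a single commutative diagram
\[
\begin{CD}
H^{2d-1}_\mot\bigl(S,R\pi_!Rj_{D*}j_D^{*}\Log{k}_\mot(d)\bigr) @>{\varrho_t\circ t^{*}\circ \pr_k}>> H^{2d-1}_\mot\bigl(S,\Sym^{k}\sH_\QQ\bigr) \\
@V{r_\et}VV @VV{r_\et}V \\
H^{2d-1}\bigl(S,R\pi_!Rj_{D*}j_D^{*}\Log{k}_\Qp(d)\bigr) @>{\varrho_t\circ t^{*}\circ \pr_k}>> H^{2d-1}\bigl(S,\Sym^{k}\sH_\Qp\bigr).
\end{CD}
\]
Chasing $_\alpha\cpol^{k}_\mot$ around the diagram and invoking (i) gives the claim.

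The one point that requires a little care, rather than being automatic, is the compatibility of the splitting $\varrho_t$ with the regulator: this relies on the fact that the identification of Corollary \ref{cor:splitting_sheaf} is induced by the canonical isomorphism $\varphi_\#$ of Theorem \ref{thm:log-1-functoriality}, which is characterised by its behaviour on the section $\one{1}$ and hence is preserved by any natural transformation of cohomology theories that sends the motivic $\one{1}$ to the étale $\one{1}$. Once this is recorded, the proposition follows immediately, which is why the author describes it as obvious.
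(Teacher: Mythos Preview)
Your proposal is correct and is exactly the approach the paper takes: the paper simply states that the proposition is ``obvious from the fact that the evaluation map is compatible with the \'etale regulator,'' and your write-up is a careful unpacking of precisely that compatibility (six-functor naturality plus the observation that the splitting $\varrho_t$ is characterised by $\one{1}$ and hence preserved by $r_\et$).
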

\section{Sheaves of Iwasawa algebras}
\subsection{Iwasawa algebras}
Let $X=\prolim_r X_r$ be a profinite space with transition maps $\lambda_r:X_{r+1}\to X_r$
and 
\[
\Lambda_r[X_r]:=\Map(X_r,\ZZ/p^{r}\ZZ)
\]
the $\ZZ/p^{r}\ZZ$-module of maps from $X_r$ to $\ZZ/p^{r}\ZZ$.
For each $x_r$ we write $\delta_{x_r}\in \Lambda_r[X_r]$ for the map which is $1$ at $x_r$ and $0$ else. It is convenient to interpret $\Lambda_r[X_r]$ as the space
of $\ZZ/p^{r}\ZZ$-valued measures on $X_r$ and $\delta_{x_r}$ 
as the delta measure at $x_r$. Then the push-forward along  $\lambda_r:X_{r+1}\to X_r$ composed with reduction modulo $p^{r}$ induces $\Zp$-module maps
\begin{equation}\label{eq:Lambda-transition} 
\lambda_{r*}:\Lambda_{r+1}[X_{r+1}]\to \Lambda_{r}[X_{r}]
\end{equation}
which are characterized by $\lambda_{r*}(\delta_{x_{r+1}})=\delta_{\lambda_r(x_r)}$.
\begin{definition}
The \emph{module of $\Zp$-valued measures on $X$} is the inverse limit 
\[
\Lambda(X):=\prolim_r \Lambda_r[X_r]
\]
of $\Lambda_r[X_r]$ with respect to the transition maps from \eqref{eq:Lambda-transition}.
\end{definition}
Let $x=(x_r)_{r\ge 0}\in X$.  We define $\delta_x:=(\delta_{x_r})_{r\ge 0}\in \Lambda(X)$, which provides a map
\[
\delta:X\to \Lambda(X).
\]
For each continuous map $\varphi:X\to Y$ of profinite spaces
we get a homomorphism
\begin{equation}
\varphi_*:\Lambda(X)\to \Lambda(Y)
\end{equation}
"push-forward of measures" with the property $\varphi_*(\delta_x)=\delta_{\varphi(x)}$.
Obviously, one has 
$\Lambda_r[X_r\times Y_r]\isom\Lambda_r[X_r]\otimes\Lambda_r[Y_r]$ so that 
\[
\Lambda(X\times Y)\isom \Lambda(X)\widehat{\otimes}\Lambda(Y):=
\prolim_r \Lambda_r[X_r]\otimes\Lambda_r[Y_r].
\]
In particular, if $X=G=\prolim_r G_r$ is a profinite group, the
group structure $\mu:G\times G\to G$ induces a $\Zp$-algebra structure on $\Lambda(G)$, which coincides with the $\Zp$-algebra structure induced by the inverse limit of 
group algebras $\prolim_r \Lambda_r[G_r]$.
\begin{definition}
If $G=\prolim_r G_r$ is a profinite group, we call 
\[
\Lambda(G):=\prolim_r \Lambda_r[G_r]
\]
the \emph{Iwasawa algebra of $G$}. 
\end{definition}
More generally, if $G$ acts continuously on the profinite space $X$, one gets a map
\[
\Lambda(G)\widehat{\otimes}\Lambda(X)\to \Lambda(X)
\]
which makes $\Lambda(X)$ a $\Lambda(G)$-module.  If $X$ is a principal homogeneous space under $G$, then $\Lambda(X)$ is a free $\Lambda(G)$-module of rank $1$.

\subsection{Properties of the Iwasawa algebra}
In this section we assume that $H$ is a finitely generated free $\Zp$-module.  We let 
\[
H_r:=H\otimes_\Zp \Zp/p^{r}\Zp
\]
so that $H=\prolim_r H_r$ with the natural transition maps 
$H_{r+1}\to H_r$. 

In the case $H=\Zp$, the so called \emph{Amice transform} of a measure
$\mu\in \Lambda(\Zp)$
\[
\cA_\mu(T):=\sum_{n=0}^{\infty}T^{n}\int_{\Zp}{x\choose n}\mu(x)
\]
induces a ring isomorphism $\cA:\Lambda(\Zp)\isom \Zp[[T]]$
(see \cite[Section 1.1.]{Colmez-p-adic-L}). A straightforward generalization shows that $\Lambda(H)$ is isomorphic to
a power series ring in $\rk H$ variables. On the other hand one has the so called \emph{Laplace transform} of $\mu$ (see loc. cit.)
\[
\cL_\mu(t):=\sum_{n=0}^{\infty}\frac{t^{n}}{n!}\int_{\Zp}x^{n}\mu(x).
\]
This map is called the \emph{moment map} in \cite{Katz-p-adic-interpolation} and we will follow his terminology. In the next section, we will explain this map from an abstract algebraic point of view. For this we interpret $ \frac{t^{n}}{n!}$
as $ t^{[n]} $ in the divided power algebra $\Gamma_\Zp(\Zp)$.
\subsection{The moment map} We return to the case of a free 
$\Zp$-module $H$ of finite rank.
\begin{proposition}
Let $H$ be a free $\Zp$-module of finite rank and $H_r:=H\otimes_\Zp\ZZ/p^{r}\ZZ$. Then 
\[
\widehat{\Gamma}_\Zp(H)\isom \prolim_r \widehat{\Gamma}_{\ZZ/p^{r}\ZZ}(H_r).
\]
\end{proposition}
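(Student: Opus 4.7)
My plan is to reduce the statement to the level of each graded piece $\Gamma_n$ by using the explicit product decomposition of $\widehat\Gamma$, and then to argue that the $p$-adic completion commutes both with taking a finite-rank graded piece and with the infinite product over $n$.

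First I would unwind the definition. Since $\Gamma^{+}(M)^{[k]}=\bigoplus_{n\ge k}\Gamma_n(M)$, one has for any commutative ring $A$ and any $A$-module $M$ the identification
\[
\widehat\Gamma_A(M)\isom \prod_{n\ge 0}\Gamma_n(M)
\]
as $A$-modules, where the transition maps used to form the left-hand inverse limit are simply the projections $\bigoplus_{n<k+1}\Gamma_n(M)\to\bigoplus_{n<k}\Gamma_n(M)$. Applying this to both sides of the claimed isomorphism, it suffices to produce a natural isomorphism
\[
\prod_{n\ge 0}\Gamma_n(H)\;\isom\; \prolim_r\prod_{n\ge 0}\Gamma_n(H_r).
\]

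Next I would reduce the problem to a single graded piece. Because $H$ is free of finite rank over $\Zp$, the paper's basis description of $\Gamma_n$ shows that each $\Gamma_n(H)$ is again a free $\Zp$-module of finite rank, and the base change formula $\Gamma_A(M)\otimes_AB\isom \Gamma_B(M\otimes_AB)$ restricts to each homogeneous component to give $\Gamma_n(H)\otimes_\Zp\ZZ/p^{r}\ZZ\isom\Gamma_n(H_r)$ compatibly in $r$. Being finitely generated free over $\Zp$, $\Gamma_n(H)$ is $p$-adically complete, so
\[
\Gamma_n(H)\isom \prolim_r\Gamma_n(H)/p^{r}\Gamma_n(H)\isom\prolim_r\Gamma_n(H_r),
\]
and the inverse system $(\Gamma_n(H_r))_r$ is even surjective (hence Mittag-Leffler).

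Finally I would swap the product and the inverse limit. In the category of $\Zp$-modules both products and inverse limits are limits, hence commute, giving
\[
\prolim_r\prod_{n\ge 0}\Gamma_n(H_r)\isom \prod_{n\ge 0}\prolim_r\Gamma_n(H_r)\isom \prod_{n\ge 0}\Gamma_n(H)\isom \widehat\Gamma_\Zp(H),
\]
and this composite is manifestly the map induced by the reduction $\Gamma_\Zp(H)\to\Gamma_{\ZZ/p^{r}\ZZ}(H_r)$. No step is really an obstacle, since $H$ is free of finite rank and each $\Gamma_n(H)$ inherits this property; the only point that must be handled carefully is the formal commutation of $\prod_n$ with $\prolim_r$, which is automatic in an abelian category but does need to be spelled out so that the resulting isomorphism is recognized as the one induced by the natural maps.
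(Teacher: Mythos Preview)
Your proof is correct and follows essentially the same approach as the paper: both arguments use base change for $\Gamma$, the fact that the relevant pieces are finitely generated free $\Zp$-modules (hence $p$-adically complete), and then interchange two inverse limits. The only cosmetic difference is that the paper phrases this in terms of the truncations $\Gamma_\Zp(H)/\Gamma^{+}(H)^{[k]}$ rather than the individual graded pieces $\Gamma_n(H)$, but these are equivalent since $\Gamma_\Zp(H)/\Gamma^{+}(H)^{[k]}\isom\bigoplus_{n<k}\Gamma_n(H)$.
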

\begin{proof}
As each $\Gamma_\Zp(H)/\Gamma^{+}(H)^{[k]}$ is a finitely generated free $\Zp$-module, this follows by the compatibility with base change
of $\Gamma_\Zp(H)$ and the fact that one can interchange the inverse limits.
\end{proof}
By the universal property of the finite group ring $\Lambda_r[H_r]$, 
the group homomorphism
\begin{align*}
H_r&\to \widehat{\Gamma}_{\ZZ/p^{r}\ZZ}(H_r)^{\times}\\
h_r&\mapsto\sum_{k\ge 0}h_r^{[k]}
\end{align*}
induces a homomorphism of $\ZZ/p^{r}\ZZ$-algebras 
\[
\mom_r:\Lambda_r[H_r]\to \widehat{\Gamma}_{\ZZ/p^{r}\ZZ}(H_r).
\]
\begin{corollary}
The maps $ \mom_r $ induce in the inverse limit
a $\Zp$-algebra homomorphism
\[
\mom:\Lambda(H)\to \widehat{\Gamma}_{\Zp}(H).
\]
which is functorial in $H$.
\end{corollary}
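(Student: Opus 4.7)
The plan is to verify that the finite-level moment maps $\mom_r$ assemble into a map on inverse limits, and then invoke the previous proposition to identify the target with $\widehat{\Gamma}_\Zp(H)$.

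First I would check that the $\mom_r$ are compatible with the transition maps on both sides. The transition map on the source sends a delta measure $\delta_{h_{r+1}}$ to $\delta_{\overline{h_{r+1}}}$, where $\overline{h_{r+1}}\in H_r$ is the image of $h_{r+1}$; this is the characterization of $\lambda_{r*}$ stated in \eqref{eq:Lambda-transition}. The transition map on the target is induced functorially from the reduction $H_{r+1}\to H_r$ together with reduction of scalars, and sends $h_{r+1}^{[k]}$ to $\overline{h_{r+1}}^{[k]}$. Computing on delta measures we find
\[
\mom_r(\lambda_{r*}\delta_{h_{r+1}})=\sum_{k\ge 0}\overline{h_{r+1}}^{[k]}=\overline{\sum_{k\ge 0}h_{r+1}^{[k]}}=\overline{\mom_{r+1}(\delta_{h_{r+1}})},
\]
and since the delta measures generate $\Lambda_r[H_r]$ as a $\ZZ/p^r\ZZ$-module, this compatibility extends to all of $\Lambda_{r+1}[H_{r+1}]$ by linearity.

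Passing to the inverse limit, the compatibility above defines a $\Zp$-module map $\Lambda(H)=\prolim_r\Lambda_r[H_r]\to\prolim_r\widehat{\Gamma}_{\ZZ/p^r\ZZ}(H_r)$, and the preceding proposition identifies the target with $\widehat{\Gamma}_\Zp(H)$. To see that the resulting map $\mom$ is a $\Zp$-algebra homomorphism, note that each $\mom_r$ is already a $\ZZ/p^r\ZZ$-algebra homomorphism: it was constructed by extending a group homomorphism $H_r\to\widehat{\Gamma}_{\ZZ/p^r\ZZ}(H_r)^\times$, where the multiplicativity uses the divided-power identity $(h+h')^{[k]}=\sum_{i+j=k}h^{[i]}(h')^{[j]}$ recalled earlier in the paper. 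Taking inverse limits preserves this multiplicativity.

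For functoriality in $H$, a $\Zp$-linear map $\varphi\colon H\to H'$ of finite free $\Zp$-modules induces compatible maps $H_r\to H'_r$, hence algebra maps $\Lambda_r[H_r]\to\Lambda_r[H'_r]$ sending $\delta_{h_r}\mapsto\delta_{\varphi(h_r)}$ and algebra maps $\widehat{\Gamma}_{\ZZ/p^r\ZZ}(H_r)\to\widehat{\Gamma}_{\ZZ/p^r\ZZ}(H'_r)$ sending $h_r^{[k]}\mapsto\varphi(h_r)^{[k]}$. The square commutes on delta measures by inspection, hence everywhere, and functoriality is preserved in the inverse limit. The only mild subtlety to get right is that the formula defining $\mom_r$ lands in the completed divided power algebra rather than the finite-length truncations, but this is automatic since the exponential series $\sum_{k\ge 0}h_r^{[k]}$ converges in $\widehat{\Gamma}$, so no analytic obstacle arises and the argument reduces entirely to formal manipulations with delta measures.
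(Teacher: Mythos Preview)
Your proposal is correct and follows exactly the approach the paper has in mind: the paper states this corollary without proof, treating it as an immediate consequence of the preceding proposition identifying $\widehat{\Gamma}_\Zp(H)$ with $\prolim_r \widehat{\Gamma}_{\ZZ/p^{r}\ZZ}(H_r)$ together with the construction of $\mom_r$ via the universal property of the group ring. You have simply spelled out the compatibility on delta measures and the functoriality that the paper leaves implicit.
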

\begin{definition}\label{def:classical-mom}
We call $\mom:\Lambda(H)\to \widehat{\Gamma}_{\Zp}(H)$ the \emph{moment map} and the composition with the projection to
$\Gamma_k(H)$ 
\[
\mom^{k}:\Lambda(H)\to \Gamma_k(H)
\]
the \emph{$k$-th moment map}.
\end{definition}
\subsection{Sheafification of the Iwasawa algebras}\label{sec:iwasawa}
Let $X$ be a separated noetherian scheme of finite type as in section \ref{section-Zp-sheaves} and $\sX:=(p_r:\sX_r\to X)_r$ be an inverse system of quasi-finite \'etale schemes over $X$ with \'etale transition maps $\lambda_r:\sX_r\to \sX_{r-1}$. We often write 
\begin{equation}
\Lambda_r:=\ZZ/p^{r}\ZZ.
\end{equation}
The adjunction $\lambda_{r!}\lambda_r^{!}\to \id$ defines a homomorphism
\[
p_{r+1!}\Lambda_{r+1}=p_{r!}\lambda_{r!}\lambda_r^{!}\Lambda_{r+1}\to 
p_{r!}\Lambda_{r+1},
\]
because $\lambda_r$ is \'etale. If one composes this with reduction modulo $p^{r}$, one gets a trace map
\begin{equation}\label{eq:trace-map}
\Tr_{r+1}:p_{r+1,!}\Lambda_{r+1}\to p_{r,!}\Lambda_{r}.
\end{equation}
\begin{definition}
We define an \'etale sheaf on $X$ by
\[
\Lambda_r[\sX_r]:=p_{r!}\Lambda_r.
\]
With the trace maps 
$\Tr_{r+1}:\Lambda_{r+1}[\sX_{r+1}]\to \Lambda_{r}[\sX_{r}]$ as transition morphisms we define the pro-sheaf
\[
\Lambda(\sX):=(\Lambda_r[\sX_r])_{r\ge 0}.
\]
\end{definition}
This definition is functorial in $\sX$. If $(\varphi_r)_r:(\sX_r)_r\to (\sY_r)_r$ is a morphism of inverse system of quasi-finite \'etale schemes over $X$, then the adjunction $\varphi_{r!}\varphi_r^{!}\to \id$ defines a morphism
\[
\varphi_{r!}:\Lambda_r[\sX_r]\to \Lambda_r[\sY_r]
\]
compatible with the transition maps, and hence 
a morphism of pro-sheaves
\[
\Lambda(\sX)\to \Lambda(\sY).
\]
Moreover, the formation of $\Lambda(\sX)$ is compatible with base change: if $\sX_{r,T}:=\sX_r\times_ST$ for an $S$-scheme
$f:T\to S$, then by proper base change one has
\[
f^{*}\Lambda_r[\sX_r]\isom \Lambda[\sX_{r,T}].
\]
By the K\"unneth formula, one has 
\[
\Lambda_r[\sX_r\times_X\sY_r]\isom \Lambda_r[\sX_r]\otimes \Lambda_r[\sY_r]
\] 
and hence $\Lambda(\sX\times_X\sY)\isom \Lambda(\sX)\widehat{\otimes}\Lambda(\sY)$ by taking the inverse limit.
In particular, in the case where $\sX=\sG$ is an inverse system of 
quasi-finite \'etale group schemes $\sG_r$, the group structure $\mu_r:\sG_r\times_X\sG_r\to \sG_r$ induces a ring structure 
\[
\Lambda(\sG)\widehat{\otimes}\Lambda(\sG)\to \Lambda(\sG)
\] 
on $\Lambda(\sG)$. Similarly, if 
\[
\sG\times_X\sX\to \sX
\]
is a group action of inverse systems, i.e., a compatible family of actions $\sG_r\times_X\sX_r\to \sX_r$, then $\Lambda(\sX)$ becomes a $\Lambda(\sG)$-module.

The next lemma shows that the above construction indeed sheafifies the Iwasawa algebras considered before. 

\begin{lemma}
Let $\overline{x}\in X$ be a geometric point and write 
$p_{r,\overline{x}}:\sX_{r,\overline{x}}\to \overline{x}$ for the base change of $\sX_r$ to 
$\overline{x}$ considered as a finite set. Then 
\[
\Lambda_r[\sX_r]_{\overline{x}}\isom\Lambda_r[X_r].
\]
\end{lemma}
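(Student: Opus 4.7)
The plan is to deduce the lemma directly from proper base change for the functor $p_{r!}$, applied to the constant sheaf $\Lambda_r$.

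First, I would observe that since $p_r : \sX_r \to X$ is quasi-finite étale, it is in particular separated of finite type, so the base change theorem for $Rp_{r!}$ is available. Forming the cartesian square
\[
\begin{CD}
\sX_{r,\overline{x}} @>>> \sX_r \\
@Vp_{r,\overline{x}}VV @VVp_{r}V \\
\overline{x} @>>> X,
\end{CD}
\]
proper base change yields a canonical isomorphism
\[
(p_{r!}\Lambda_r)_{\overline{x}} \isom (p_{r,\overline{x}})_{!}\Lambda_r
\]
of sheaves on the geometric point $\overline{x}$, i.e.\ of $\Lambda_r$-modules.

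Second, I would compute the right-hand side by hand. Since $p_r$ is quasi-finite, the fiber $\sX_{r,\overline{x}}$ is a finite disjoint union of copies of $\overline{x}$, say indexed by the finite set $X_r := \sX_{r,\overline{x}}$. For each such point $y$, the inclusion $i_y : y \to \sX_{r,\overline{x}}$ is an open and closed immersion, so $(p_{r,\overline{x}})_{!} i_{y*} \Lambda_r \isom \Lambda_r$, and summing over the finitely many components gives
\[
(p_{r,\overline{x}})_{!}\Lambda_r \isom \bigoplus_{y \in X_r} \Lambda_r \isom \Lambda_r[X_r],
\]
as claimed.

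There is no real obstacle here: the only thing to confirm is the applicability of proper base change for $p_!$ in the non-proper (but separated of finite type) setting, which is standard in the Ekedahl/SGA~4 formalism recalled in Section~\ref{section-Zp-sheaves}. The compatibility of the identification with the transition maps $\Tr_{r+1}$ is automatic, since the trace map was defined in \eqref{eq:trace-map} via the same adjunction $\lambda_{r!}\lambda_{r}^{!} \to \id$ that underlies the base-change isomorphism on stalks.
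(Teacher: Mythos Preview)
Your proof is correct and follows essentially the same approach as the paper: invoke proper base change for $p_{r!}$ to reduce to the stalk over $\overline{x}$, then compute $(p_{r,\overline{x}})_!\Lambda_r$ directly over the algebraically closed field. The paper states this in two lines; you have simply unpacked the second step (the identification of $(p_{r,\overline{x}})_!\Lambda_r$ with the free module on the finite fiber) in more detail.
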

\begin{proof}
This follows directly from the base change property of $\Lambda_r[\sX_r]$ and the fact that  $p_{r,\overline{x},!}\Lambda_r \isom\Lambda_r[X_r]$ over an algebraically closed field.
\end{proof}

We return to our basic set up, where $\pi:G\to S$ is a separated smooth
commutative group scheme with connected fibres. Recall from
\ref{lemma:description-of-H} that $\sH_r$ is the sheaf associated to $G[p^{r}]$, which is quasi-finite and \'etale over $S$. 
\begin{definition}
Define the \emph{sheaf of Iwasawa algebras} $\Lambda(\sH)$ on $S$ to be the pro-sheaf
\[
\Lambda(\sH):=(\Lambda_r[\sH_r])_{r\ge 0}.
\]
\end{definition}
\subsection{Sheafification of the moment map} We keep the notation of the previous section. In particular, we consider the \'etale sheaf $\sH_r$ and the sheaf $\Lambda_r[\sH_r]$.

Over $G[p^{r}]$ the sheaf $[p^{r}]^{*}\sH_r$ has the tautological section $\tau_r\in \Gamma(G[p^{r}],[p^{r}]^{*}\sH_r)$ corresponding to the
identity map $G[p^{r}]\to \sH_r$. This gives rise to the section
\begin{equation}\label{eq:tau}
\tau_r^{[k]}\in \Gamma(G[p^{r}],[p^{r}]^{*}\Gamma_k(\sH_r))
\end{equation}
of the $k$-th divided power of $\sH_r$.
Using the chain of isomorphisms (note that $[p^{r}]^{*}=[p^{r}]^{!}$ as $[p^{r}]$ is \'etale)
\[
\Gamma(G[p^{r}],[p^{r}]^{*}\Gamma_k(\sH_r))\isom
\Hom_{G[p^{r}]}(\ZZ/p^{r}\ZZ,[p^{r}]^{*}\Gamma_k(\sH_r) )\isom
\Hom_S([p^{r}]_!\ZZ/p^{r}\ZZ,\Gamma_k(\sH_r)),
\]
the section $\tau_r^{[k]}$ gives rise to a morphism of sheaves
\begin{equation}\label{eq:mom-k-r}
\mom^{k}_r:\Lambda_r[\sH_r]\to \Gamma_k(\sH_r).
\end{equation}
\begin{lemma} There is a commutative diagram 
\[
\begin{CD}
\Lambda_r[\sH_r]@>\mom_r^{k}>>\Gamma_k(\sH_r)\\
@V\Tr_rVV@VVV\\
\Lambda_{r-1}[\sH_{r-1}]@>\mom_{r-1}^{k}>>\Gamma_k(\sH_{r-1})
\end{CD}
\]
where the right vertical map is given by the reduction map
\[
\Gamma_k(\sH_r)\to \Gamma_k(\sH_r)\otimes_{\ZZ/p^{r}\ZZ}\ZZ/p^{r-1}\ZZ\isom
\Gamma_k(\sH_{r-1}).
\]
\end{lemma}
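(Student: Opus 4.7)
The plan is to translate the commutativity of the square into an equality of sections over $G[p^{r}]$, using the standard adjunction for \'etale maps, and then reduce to a tautological identification.

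First, for any $\Lambda_{r}$-sheaf $\sF$ on $S$, the \'etaleness of $[p^{r}]:G[p^{r}]\to S$ gives a natural identification
\[
\Hom_S([p^{r}]_!\Lambda_r,\sF) \isom \Gamma(G[p^{r}],[p^{r}]^{*}\sF),
\]
and likewise for $[p^{r-1}]$. Both routes around the square are elements of $\Hom_S(\Lambda_r[\sH_r],\Gamma_k(\sH_{r-1}))$, so it suffices to verify that they correspond to the same section in $\Gamma(G[p^{r}],[p^{r}]^{*}\Gamma_k(\sH_{r-1}))$.

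Next I would compute each path on the section side. By the construction \eqref{eq:mom-k-r}, $\mom_r^{k}$ corresponds to $\tau_r^{[k]}$, so the top-right composition corresponds to the image of $\tau_r^{[k]}$ under the reduction $[p^{r}]^{*}\Gamma_k(\sH_r)\to[p^{r}]^{*}\Gamma_k(\sH_{r-1})$; by functoriality of divided powers this image equals $(\overline{\tau_r})^{[k]}$, where $\overline{\tau_r}$ denotes the reduction of $\tau_r$ in $[p^{r}]^{*}\sH_{r-1}$. For the left-bottom composition, recall that $\Tr_r$ factors as
\[
[p^{r}]_!\Lambda_r = [p^{r-1}]_!\lambda_{r-1,!}\lambda_{r-1}^{*}\Lambda_r \to [p^{r-1}]_!\Lambda_r \to [p^{r-1}]_!\Lambda_{r-1},
\]
with the first arrow the $[p^{r-1}]_!$-pushforward of the counit $\lambda_{r-1,!}\lambda_{r-1}^{*}\to \id$ and the second the reduction mod $p^{r-1}$. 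A short stalk computation (or a diagram chase using the naturality of $(\lambda_{r-1,!},\lambda_{r-1}^{*})$-adjunction) shows that precomposition with the counit corresponds to pull-back $\lambda_{r-1}^{*}$ on sections, and the reduction step is harmless since $\Gamma_k(\sH_{r-1})$ is already a $\Lambda_{r-1}$-sheaf. Hence the left-bottom composition corresponds to $\lambda_{r-1}^{*}(\tau_{r-1}^{[k]}) = (\lambda_{r-1}^{*}\tau_{r-1})^{[k]}$.

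It then remains to check the degree-one identity $\overline{\tau_r}=\lambda_{r-1}^{*}\tau_{r-1}$ in $\Gamma(G[p^{r}],[p^{r}]^{*}\sH_{r-1})$. Under the identification $\sH_s\isom G[p^{s}]$ of Lemma \ref{lemma:description-of-H}, the transition map $\sH_r\to\sH_{r-1}$ of the pro-system defining $\sH$ becomes precisely $\lambda_{r-1}:G[p^{r}]\to G[p^{r-1}]$. Since $\tau_s$ corresponds by definition to the identity of $G[p^{s}]$, both $\overline{\tau_r}$ and $\lambda_{r-1}^{*}\tau_{r-1}$ are just $\lambda_{r-1}$ itself, and the diagram commutes.

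The hard part will be the bookkeeping in the middle paragraph: confirming that precomposition with the $[p^{r-1}]_!$-pushforward of the counit, interpreted under the adjunction above, really does translate into the pull-back $\lambda_{r-1}^{*}$ on the section side. This is a formal but essential check in the six-functor formalism, and it is what glues the Iwasawa-algebra side to the divided-power side.
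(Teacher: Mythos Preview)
Your argument is correct and is essentially the paper's own proof, only organized slightly differently. The paper first reduces $\mom_r^{k}$ modulo $p^{r-1}$, identifies the resulting map with the one given by the pulled-back section $\lambda_r^{*}\tau_{r-1}^{[k]}$, and then observes that a map arising from a pulled-back section automatically factors through the trace; you instead compute both routes as sections over $G[p^{r}]$ and equate them. The core step in both is the degree-one identity $\overline{\tau_r}=\lambda_r^{*}\tau_{r-1}$ (note the paper writes $\lambda_r:\sH_r\to\sH_{r-1}$ for what you call $\lambda_{r-1}$), and your ``hard part'' about the counit versus $\lambda^{*}$-pullback is exactly the formal adjunction identity the paper uses implicitly when it says the map ``has to factor through $\Tr_r$''.
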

\begin{proof} Denote by $\lambda_r:\sH_r\to \sH_{r-1}$ the  transition map. Reduction modulo $p^{r-1}$ gives a commutative diagram
\[
\begin{CD}
[p^{r}]_!\ZZ/p^{r}\ZZ@>\mom^{k}_r >> \Gamma_k(\sH_r)\\
@VVV@VVV\\
[p^{r}]_!\lambda_r^{*}\ZZ/p^{r-1}\ZZ@>\mom^{k}_r\otimes \ZZ/p^{r-1}\ZZ>> \Gamma_k(\sH_{r-1}).
\end{CD}
\]
As the image of the tautological class $\tau_r^{[k]}\in \Gamma(G[p^{r}],[p^{r}]^{*}\Gamma_k(\sH_r))$ under the reduction map gives the the pull-back of the tautological class 
\begin{align*}
\lambda_r^{*}\tau^{[k]}_{r-1}\in \Gamma(G[p^{r}],[p^{r}]^{*}\Gamma_k(\sH_{r-1}))&\isom
\Hom_{G[p^{r}]}(\lambda_r^{*}\ZZ/p^{r-1}\ZZ,[p^{r}]^{*}\Gamma_k(\sH_{r-1}))\\
&\isom \Hom_{S}([p^{r}]_!\lambda_r^{*}\ZZ/p^{r-1}\ZZ,\Gamma_k(\sH_{r-1}))
\end{align*}
one concludes that $\mom^{k}_r\otimes \ZZ/p^{r-1}\ZZ$ coincides with the map given by $\lambda_r^{*}\tau^{[k]}_{r-1}$. This means that 
$\mom^{k}_r\otimes \ZZ/p^{r-1}\ZZ$ has to factor through $\Tr_r$, i.e., the diagram
\[
\xymatrix{
[p^{r-1}]_!\lambda_{r!}\lambda_r^{*}\ZZ/p^{r-1}\ZZ
\ar[rr]^{\mom^{k}_r\otimes \ZZ/p^{r-1}\ZZ}\ar[dr]_{\Tr_r}&&\Gamma_k(\sH_{r-1})\\
&[p^{r-1}]_!\ZZ/p^{r-1}\ZZ\ar[ur]_{\mom_{r-1}^{k}}
}
\]
commutes, which gives the desired result.
\end{proof}
With this result we can now define the moment map for the 
sheaf of Iwasawa algebras $\Lambda(\sH)$. 
\begin{definition}\label{def:mom}
We define the \emph{$k$-th moment map } to be the map of pro-sheaves
\[
\mom^{k}:\Lambda(\sH)\to \Gamma_k(\sH)
\]
defined by $(\mom_r^{k})_{r\ge 0}$ and 
\[
\mom:\Lambda(\sH)\to \widehat{\Gamma}_\Zp(\sH)
\]
by taking $\mom^{k}$ in the $k$-th component.
\end{definition}
\begin{remark}
In each stalk the the map $\mom^{k}$ coincides with the map $\mom^{k}$ defined in \ref{def:classical-mom} (see \cite[Lemma 12.2.14]{Kings-Soule}).
\end{remark}
\section{The  integral logarithm sheaf}
\subsection{Definition of the integral logarithm sheaf}
We now define a pro-sheaf $\cL$ on $G$ of modules over $\pi^{*}\Lambda(\sH)$, which will give a $\Zp$-structure of the
logarithm sheaf $\cLog_\Qp$. For this write $G_r:=G$ considered as 
a quasi-finite \'etale $G$-scheme via the $p^{r}$-multiplication
\begin{equation}
[p^{r}]:G_r=G\to G.
\end{equation}
Note that this is a $G[p^{r}]$-torsor
\[
0\to G[p^{r}]\to G_r\xrightarrow{[p^{r}]}G\to 0
\]
over $G$.
Let $\lambda_r:G_r\to G_{r-1}$ be the transition map, 
which is just the $[p]$-multiplication in this case. Then, as 
in \eqref{eq:trace-map}, we have trace maps 
\[
\Tr_r:\Lambda_r[G_r]\to \Lambda_{r-1}[G_{r-1}].
\]
We will also need the following variant. Let $\Lambda_s:=\ZZ/p^{s}\ZZ$ and write
\begin{equation}
\Lambda_s[G_r]:=[p^{r}]_!\Lambda_s.
\end{equation}
Then the adjunction $\lambda_{r!}\lambda_r^{!}\to \id$ defines transition morphisms
\begin{equation}
\lambda_{r!}:\Lambda_s[G_r]\to \Lambda_s[G_{r-1}].
\end{equation}
\begin{definition}
With the above transition maps we can define the pro-sheaves
\begin{align*}
\cL:=(\Lambda_r[G_r])_{r\ge 0}&&
\mbox{and}
&& \cL_{\Lambda_s}:=(\Lambda_s[G_r])_{r\ge 0}.
\end{align*}
We call $\cL$ the \emph{integral logarithm sheaf}.
\end{definition}
Note that the reduction modulo $p^{s-1}$ gives transition maps
$\cL_{\Lambda_s}\to \cL_{\Lambda_{s-1}}$ and that we have an isomorphism of pro-sheaves
\begin{equation}
\cL\isom (\cL_{\Lambda_s})_{s\ge 0}.
\end{equation}
By the general theory outlined above, $\cL$ is a module over
$\pi^{*}\Lambda(\sH)$ which is free of rank $1$. 

Let $t:S\to G$ be a section and denote by $G[p^{r}]\langle t\rangle$
the  $G[p^{r}]$-torsor defined by the cartesian diagram
\begin{equation}\label{eq:G-torsor-defn}
\begin{CD}
G[p^{r}]\langle t\rangle@>>> G_r\\
@VVV@VV[p^{r}] V\\
S@>t>> G.
\end{CD}
\end{equation}
We denote by $\sH_r\langle t\rangle$ the \'etale sheaf defined by 
$G[p^{r}]\langle t\rangle$ and by 
$\sH\langle t\rangle:=(\sH_r\langle t\rangle)$ the pro-system defined 
by the trace maps. 
We write 
\[
\Lambda(\sH\langle t\rangle):=(\Lambda_r[\sH_r\langle t\rangle])_{r\ge 0}
\]
for the sheaf of Iwasawa modules defined by $\sH\langle t\rangle$.
\begin{lemma}\label{lemma:base-change-for-cL} There is an canonical isomorphism
\[
t^{*}\cL\isom \Lambda(\sH\langle t\rangle).
\]
In particular, for the unit section $e:S\to G$ one has
\begin{align*}
e^{*}\cL\isom \Lambda(\sH)
\end{align*}
and hence a section $\bfone:\Zp\to e^{*}\cL$ given by mapping $1$ to $1$.
\end{lemma}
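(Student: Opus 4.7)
The plan is to prove the first isomorphism level by level via proper base change and then verify compatibility with the transition maps, and finally to specialize to $e$ and construct $\bfone$.

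At each finite level, by definition $\Lambda_r[G_r]=[p^{r}]_!\Lambda_r$. Since $p$ is invertible on $S$, the $p^{r}$-multiplication $[p^{r}]:G_r\to G$ is a quasi-finite \'etale $G[p^{r}]$-torsor, so proper base change applied to the cartesian diagram \eqref{eq:G-torsor-defn} gives a canonical isomorphism
\[
t^{*}[p^{r}]_!\Lambda_r \;\cong\; p_{r,t,!}\Lambda_r \;=\; \Lambda_r[\sH_r\langle t\rangle],
\]
where $p_{r,t}:\sH_r\langle t\rangle\to S$ is the structure map of the fibre product. Modding out by $p^{r-1}$, the same argument applies to the variant $\Lambda_s[G_r]$ if one wants to treat $\cL_{\Lambda_s}$ in parallel.

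The content of the proof is to check that these level-$r$ isomorphisms intertwine the transition maps on both sides. The transition $\Tr_r:\Lambda_r[G_r]\to\Lambda_{r-1}[G_{r-1}]$ is built from the adjunction $\lambda_{r!}\lambda_r^{!}\to\id$ for $\lambda_r:G_r\to G_{r-1}$ (which is $[p]$-multiplication), followed by reduction modulo $p^{r-1}$. The compatibility of the cartesian squares \eqref{eq:G-torsor-defn} at levels $r$ and $r-1$ induces a map $\mu_{r,t}:\sH_r\langle t\rangle\to\sH_{r-1}\langle t\rangle$, and the transition map on $\Lambda(\sH\langle t\rangle)$ is obtained from $\mu_{r,t,!}\mu_{r,t}^{!}\to\id$ in exactly the same way. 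The required commutativity is then formal, coming from the naturality of proper base change with respect to morphisms of cartesian squares and the naturality of the counit $\lambda_!\lambda^{!}\to\id$ under pullback. This bookkeeping is the only nontrivial step and is where I expect to spend the most care; everything else in the proof is either a definition or an application of an already-invoked isomorphism.

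For the second assertion, inspecting the cartesian square \eqref{eq:G-torsor-defn} for the unit section $e$ identifies $G[p^{r}]\langle e\rangle$ with the scheme-theoretic kernel of $[p^{r}]$, i.e., with $\sH_r$ via Lemma \ref{lemma:description-of-H}; hence $\sH\langle e\rangle\cong\sH$ and $e^{*}\cL\cong\Lambda(\sH)$. To define $\bfone$, observe that at each level the zero section $0_r:S\to\sH_r$ satisfies $p_r\circ 0_r=\id_S$, and the adjunction unit produces a map
\[
\Lambda_r \;=\; p_{r!}0_{r!}\Lambda_r \;\longrightarrow\; p_{r!}\Lambda_r \;=\; \Lambda_r[\sH_r],
\]
which is the ``$\delta$-measure at the identity,'' i.e.\ the unit of the Iwasawa algebra structure on $\Lambda_r[\sH_r]$. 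Since $\Tr_r$ pushes forward measures and sends $\delta_{0_r}$ to $\delta_{0_{r-1}}$, these level-$r$ unit maps are compatible in the inverse system, giving the desired morphism $\bfone:\Zp\to\Lambda(\sH)=e^{*}\cL$.
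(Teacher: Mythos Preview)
Your proof is correct and is precisely what the paper's one-line argument (``This follows directly from the fact that $\cL$ is compatible with base change and the definitions'') unpacks to: you apply base change for $[p^{r}]_!$ along the cartesian square \eqref{eq:G-torsor-defn} level by level, check compatibility with the trace transition maps via naturality, and then read off the special case $t=e$ and the unit section $\bfone$. One minor terminological slip: the map $0_{r!}\Lambda_r\to\Lambda_r$ you use to build $\bfone$ is the \emph{counit} $0_{r!}0_r^{!}\to\id$ (using that $0_r$ is an open and closed immersion, so $0_r^{!}=0_r^{*}$), not the unit; the construction itself is fine.
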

\begin{proof}
This follows directly from the fact that $\cL$ is compatible with base change and the definitions.
\end{proof}
\subsection{Basic properties of the integral logarithm sheaf}
The integral logarithm sheaf enjoys the same properties as its 
$\Qp$-counterpart, namely functoriality, vanishing of cohomology and 
a universal property for unipotent sheaves.

Let $\varphi:G_1\to G_2$
be a homomorphism of group schemes of relative 
dimension $d_1$ and $d_2$ over $S$. Denote by $\cL_1$ and $\cL_2$ the
integral logarithm sheaves on $G_1$ and $G_2$ respectively.
\begin{theorem}[Functoriality]
Let $c:=d_1-d_2$. Then there is a canonical map 
\[
\varphi_\#:\cL_1\to \varphi^{*}\cL_2\isom \varphi^{!}\cL_2(-c)[-2c].
\]
Moreover, if $\varphi$ is an isogeny of degree prime to $p$, then
$\varphi_\#:\cL_1\isom \varphi^{*}\cL_2$ is an isomorphism.
\end{theorem}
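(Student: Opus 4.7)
The plan is to construct $\varphi_\#$ level by level and then assemble into a morphism of pro-sheaves. For each $r \ge 0$, form the fibre product $P_r := G_1 \times_{G_2, [p_2^r]} G_{2,r}$ with projection $q_r: P_r \to G_1$, and let $\iota_r: G_{1,r} \to P_r$ be the canonical map sending $g \mapsto (p^r g, \varphi(g))$. Then $[p_1^r] = q_r \circ \iota_r$. Since $q_r$ is the base change of the étale map $[p_2^r]$ and $[p_1^r]$ is étale, the morphism $\iota_r$ between étale $G_1$-schemes is itself étale. Proper base change for $f_!$ yields a canonical isomorphism $\varphi^* \cL_2^{(r)} = \varphi^* [p_2^r]_! \Lambda_r \isom q_{r!} \Lambda_r$, and étaleness of $\iota_r$ provides the counit (trace) $\iota_{r!}\Lambda_r = \iota_{r!}\iota_r^* \Lambda_r \to \Lambda_r$ on $P_r$. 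Pushing forward by $q_{r!}$ gives
\[
\varphi_\#^{(r)}: \cL_1^{(r)} = q_{r!}\iota_{r!}\Lambda_r \to q_{r!}\Lambda_r \isom \varphi^* \cL_2^{(r)}.
\]

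Next I would verify compatibility with the transition maps in the two pro-systems. Both transitions are built from the adjunction $\lambda_{r!}\lambda_r^* \to \id$ for the $[p]$-multiplication $\lambda_r: G_{r} \to G_{r-1}$ followed by reduction modulo $p^{r-1}$. The square relating $\varphi_\#^{(r)}$ and $\varphi_\#^{(r-1)}$ embeds naturally into a larger commutative diagram involving the successive fibre products for the tower $G_2 \xleftarrow{[p_2^r]} G_{2,r} \xleftarrow{\lambda} G_{2,r+1}$; compatibility then follows from transitivity of trace for composites of étale maps and from naturality of proper base change. An analogous calculation shows $\varphi_\#$ respects the $\pi^*\Lambda(\sH)$-module structure, recovering at the level of sections the pushforward-of-measures homomorphism $\Lambda(\sH_1\langle g\rangle) \to \Lambda(\sH_2\langle \varphi(g)\rangle)$ afforded by Lemma \ref{lemma:base-change-for-cL}.

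For the second assertion, suppose $\varphi$ is an isogeny of degree prime to $p$; then $\varphi|_{G_1[p^r]}: G_1[p^r] \isom G_2[p^r]$ for every $r$. Over a geometric point $g \in G_1$, the fibre of $[p_1^r]$ is a $G_1[p^r]$-torsor and the fibre of $q_r$ is a $G_2[p^r]$-torsor (the set of $p^r$-th roots of $\varphi(g)$), and $\iota_r$ restricts to a map of torsors equivariant under the isomorphism $\varphi|_{G_1[p^r]}$. Hence $\iota_r$ is an isomorphism on all geometric fibres between étale $G_1$-schemes, so an isomorphism; consequently $\varphi_\#^{(r)}$ is an isomorphism for every $r$, and so is $\varphi_\#$. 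The identification $\varphi^*\cL_2 \isom \varphi^!\cL_2(-c)[-2c]$ is handled exactly as in the proof of Theorem \ref{thm:log-1-functoriality}, reducing via the smoothness of $\pi_1$ and $\pi_2$ to the purity isomorphism $\pi_i^!\Lambda \isom \Lambda(d_i)[2d_i]$.

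The main obstacle is the transition compatibility: it requires assembling the trace on $\iota_r$ with the trace maps coming from the $\lambda_r$-pushforward through a somewhat intricate three-fold fibre product diagram and invoking base change and adjunction naturality at each step. Everything else reduces to standard properties of étale morphisms and the six-functor formalism of Ekedahl.
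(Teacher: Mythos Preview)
Your construction is essentially identical to the paper's: the paper also forms the map of $G_1$-schemes $G_{1,r}\to G_{2,r}\times_{G_2}G_1$ (your $\iota_r$), invokes the adjunction $\iota_{r!}\iota_r^{!}\to\id$ together with base change to produce $\varphi_\#^{(r)}$, and then passes to the inverse system; for the isogeny statement it simply observes that this map of $G_1$-schemes is an isomorphism when $\deg\varphi$ is prime to $p$. You are more explicit than the paper on two points it leaves to the reader---the \'etaleness of $\iota_r$ (via cancellation for \'etale morphisms) and the compatibility with the transition maps---but the underlying argument is the same. One small remark: for the identification $\varphi^{*}\cL_2\isom\varphi^{!}\cL_2(-c)[-2c]$, referring to Theorem~\ref{thm:log-1-functoriality} is not quite on target, since Lemma~\ref{lem:unipotent_uppershriek} is stated for unipotent sheaves and you have not checked this for $\Lambda_r[G_{2,r}]$; your parenthetical is the right fix---pull the identification back through $q_{r!}$ and apply purity to the constant sheaf on $G_{2,r}$, which is trivially unipotent.
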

\begin{proof}
The homomorphism $\varphi$ induces a homomorphism of group schemes over $G_1$
\begin{equation}\label{eq:functoriality-varphi}
\varphi:G_{1,r}\to G_{2,r}\times_{G_2}G_1
\end{equation}
which induces by adjunction $\varphi_!\varphi^{!}\to \id$ and
the base change property of $\Lambda_{r}[G_{2,r}]$
a morphism of sheaves 
\[
\varphi_\#:\Lambda_r[G_{1,r}]\to \varphi^{*}\Lambda_{r}[G_{2,r}]=\varphi^{!} \Lambda_{r}[G_{2,r}]  (-c)[-2c].
\]
Passing to the limit gives the required map. If $\varphi$ is an isogeny of degree prime to $p$, then the map in \eqref{eq:functoriality-varphi} is an isomorphism. Hence this is also
true for $\varphi_\#$.
\end{proof}
\begin{corollary}[Splitting principle] Let $c$ be an integer prime to $p$ and
let $t:S\to G$ be a $c$-torsion section. Then 
there is an isomorphism
\[
[c]_\#:t^{*}\cL\isom \Lambda(\sH).
\]
More generally, if $D:=G[c]$ with $(c,p)=1$ then 
\[
\iota_D^{*}\cL\isom \pi_{D}^{*}\Lambda(\sH),
\]
where $\iota_D:D\to G$ and $\pi_D:D\to S$ is the structure map.
\end{corollary}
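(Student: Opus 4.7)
The plan is to deduce both statements from the Functoriality Theorem applied to $[c]:G\to G$, exactly in parallel with the proof of Corollary~\ref{cor:splitting_sheaf} in the $\Qp$-setting.

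First I would handle the special case of a $c$-torsion section $t\colon S\to G$. Since $(c,p)=1$, the multiplication map $[c]\colon G\to G$ is an isogeny of degree a power of $c$, hence prime to $p$. The Functoriality Theorem then yields an isomorphism
\[
[c]_\#\colon\cL\isom [c]^{*}\cL.
\]
Applying $t^{*}$ and using $[c]\circ t=e$ together with Lemma~\ref{lemma:base-change-for-cL} gives
\[
t^{*}\cL\isom t^{*}[c]^{*}\cL=e^{*}\cL\isom \Lambda(\sH),
\]
which is the first assertion.

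For the second, more general, statement I would reduce to the first one by base change. Consider the cartesian square
\[
\begin{CD}
G_D @>f_D>> G\\
@V\pi_{G_D}VV @VV\pi V\\
D @>\pi_D>> S
\end{CD}
\]
and the tautological section $\tau\colon D\to G_D$ coming from the pair $(\iota_D,\id_D)$. Because $D=G[c]$, the section $\tau$ is a $c$-torsion section of the $D$-group scheme $G_D$, and by base change one has $\cL_{G_D}\isom f_D^{*}\cL$ and $\sH_{G_D}\isom \pi_D^{*}\sH$, hence also $\Lambda(\sH_{G_D})\isom \pi_D^{*}\Lambda(\sH)$. Applying the first part of the corollary to $\tau$ inside $G_D/D$ and using $f_D\circ\tau=\iota_D$ yields
\[
\iota_D^{*}\cL=\tau^{*}f_D^{*}\cL\isom \tau^{*}\cL_{G_D}\isom \Lambda(\sH_{G_D})\isom \pi_D^{*}\Lambda(\sH),
\]
as required.

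There is no real obstacle: the only points that need checking are that $\cL$ is compatible with arbitrary base change (which follows from its construction via $[p^{r}]_{!}\Lambda_r$ together with proper base change) and that the trivialisation supplied by Lemma~\ref{lemma:base-change-for-cL} matches the one produced by $[c]_\#$. This is ultimately the statement that $[c]_\#$ sends the canonical section $\bfone$ on $G_1=G$ to the canonical section on $G_2=G$, which is built into the Functoriality Theorem.
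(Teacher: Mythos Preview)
Your argument is correct and, for the first assertion, identical to the paper's: apply $t^{*}$ to the isomorphism $[c]_\#\colon\cL\isom[c]^{*}\cL$ and use $[c]\circ t=e$ together with $e^{*}\cL\isom\Lambda(\sH)$.

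For the second assertion you take a slightly different route than the paper. The paper simply applies $\iota_D^{*}$ directly to $[c]_\#$ and uses the relation $[c]\circ\iota_D=e\circ\pi_D$ (valid because $D=G[c]$) to obtain
\[
\iota_D^{*}\cL\isom\iota_D^{*}[c]^{*}\cL=\pi_D^{*}e^{*}\cL\isom\pi_D^{*}\Lambda(\sH).
\]
You instead mimic the proof of the $\Qp$-splitting principle (Corollary~\ref{cor:splitting_sheaf}): base change to $D$, use the tautological $c$-torsion section $\tau\colon D\to G_D$, and invoke the first part over the new base. Both arguments are valid; the paper's is a one-line shortcut, while yours has the advantage of making the compatibility with base change explicit and of recycling the already-proved special case. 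Your closing remark about $[c]_\#$ preserving $\bfone$ is not actually needed here, since the statement only asserts the existence of an isomorphism, not a specific one.
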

\begin{proof}
Apply $t^{*}$ respectively, $\iota_D^{*}$ to the isomorphism $[c]_\#:\cL\to [c]^{*}\cL$.
\end{proof}
\begin{theorem}[Vanishing of cohomology]\label{thm:coh-computation-of-cL}
Recall that $2d$ is the relative dimension of $\pi:G\to S$. Then the pro-sheaves 
\[
R^{i}\pi_!\cL\mbox{ for $i<2d$}
\]
are Mittag-Leffler zero (see \ref{section-Zp-sheaves}) and 
\[
R^{2d}\pi_!\cL(d)\isom \Zp.
\]
\end{theorem}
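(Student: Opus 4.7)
The plan is to make the pro-system $(R^{i}\pi_!\Lambda_r[G_r])_{r\geq 0}$ completely explicit, identify its transition maps as multiplication by a specific power of $p$, and then read off both statements at once.

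First, since $\Lambda_r[G_r] = [p^{r}]_!\Lambda_r$ and $\pi \circ [p^{r}] = \pi$, the composition of pushforwards gives a canonical identification
\[
R^{i}\pi_!\Lambda_r[G_r] = R^{i}(\pi \circ [p^{r}])_!\Lambda_r = R^{i}\pi_!\Lambda_r \isom \bigwedge^{2d-i}\sH_r(-d)
\]
using the Hopf algebra formula \eqref{eq:Hopf-alg} (with $\ZZ/p^r\ZZ$-coefficients). Next, by unfolding the definition of the trace \eqref{eq:trace-map} for $\lambda_r = [p]:G_{r+1}\to G_r$, the transition map $R^{i}\pi_!\Lambda_{r+1}[G_{r+1}] \to R^{i}\pi_!\Lambda_r[G_r]$ is, under the identification above, the composition of the endomorphism $[p]_*$ of $R^{i}\pi_!\Lambda_{r+1}$ (induced by the counit $[p]_![p]^! \to \id$) with reduction modulo $p^{r}$.

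The heart of the argument is then the computation of $[p]_*$ on $\bigwedge^{2d-i}\sH_{r+1}(-d)$. By Lemma \ref{lemma:description-of-H} the sheaf $\sH$ is the $p$-adic Tate module of $G$, on which multiplication by $p$ acts as $p$; hence $[p]_*:\sH\to\sH$ is multiplication by $p$. Because the isomorphism \eqref{eq:Hopf-alg} is induced by the Pontryagin product coming from the group law $\mu$, and because $[p]$ is a group endomorphism (so $\mu\circ([p]\times[p])=[p]\circ\mu$), $[p]_*$ respects the Pontryagin product and therefore acts as multiplication by $p^{2d-i}$ on $\bigwedge^{2d-i}\sH_{r+1}$. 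Checking this compatibility of $[p]_*$ with the Hopf algebra structure (and thus with the exterior power decomposition) is the one point that requires genuine care; everything else is formal.

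With the transition map identified as multiplication by $p^{2d-i}$ followed by reduction modulo $p^{r}$, both conclusions follow immediately. For $i<2d$, iterating $s$ times yields multiplication by $p^{s(2d-i)}$ modulo $p^{r}$, which vanishes as soon as $s(2d-i)\geq r$; so the pro-system $R^{i}\pi_!\cL$ is Mittag-Leffler zero. For $i=2d$ the exponent $2d-i$ vanishes, the transition maps reduce to the canonical reductions $\Lambda_{r+1}(-d)\to\Lambda_r(-d)$, and the pro-system is precisely $\Zp(-d)$; twisting by $(d)$ gives $R^{2d}\pi_!\cL(d)\isom\Zp$.
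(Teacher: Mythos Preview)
Your proof is correct and follows essentially the same approach as the paper's: the paper isolates the computation of $[p^r]_!$ on $R^i\pi_!\Lambda_s$ as a separate lemma (showing it is multiplication by $p^{r(2d-i)}$ via the exterior-power description \eqref{eq:Hopf-alg} and the action on $\sH$), and then applies it to the transition maps of the pro-system, exactly as you do. The only organizational difference is that you inline this lemma and work directly with the diagonal pro-system $(\Lambda_r[G_r])_r$, whereas the paper phrases the argument for fixed coefficient level $\Lambda_s$ and varying $r$; both are equivalent since the transition map $\Lambda_{r+j}[G_{r+j}]\to\Lambda_r[G_r]$ factors through the reduction to $\Lambda_r$-coefficients.
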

We start the proof of this theorem with a lemma:
\begin{lemma} \label{lemma:trivializing-coh}
The endomorphism $[p^{r}]_!:R^{i}\pi_!\ZZ/p^{s}\ZZ\to R^{i}\pi_!\ZZ/p^{s}\ZZ$ is given by multiplication with $p^{r(2d-i)}$.
\end{lemma}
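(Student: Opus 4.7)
The plan is to reduce everything, via the exterior algebra description \eqref{eq:Hopf-alg} and its $\ZZ/p^s\ZZ$-analogue, to the action of $[p^r]_!$ on the first homology sheaf $\sH_s := \sH \otimes \ZZ/p^s\ZZ$. Since $[p^r]:G\to G$ is a homomorphism of group schemes, it is compatible with the multiplication $\mu$ and the diagonal $\Delta$, so $[p^r]_!$ respects the Hopf algebra structure on $R^{\cdot}\pi_!\ZZ/p^s\ZZ(d)$. Under the isomorphism $R^{i}\pi_!\ZZ/p^s\ZZ(d) \isom \bigwedge^{2d-i}\sH_s$, the endomorphism $[p^r]_!$ therefore has to correspond to $\bigwedge^{2d-i}$ of its restriction $[p^r]_!|_{\sH_s}$, since an exterior algebra endomorphism compatible with the Hopf algebra structure is determined by its restriction to degree one.

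Next I would compute $[p^r]_!$ on $\sH_s$ directly. By Lemma \ref{lemma:description-of-H}, one has $\sH_s \isom G[p^s]$ as \'etale sheaves, and this identification is natural in $G$. Under it, the covariant functoriality induced by the group endomorphism $[p^r]:G\to G$ is the restriction of $[p^r]$ to $G[p^s]$, which is multiplication by $p^r$ modulo $p^s$. Thus $[p^r]_!$ acts on $\bigwedge^{2d-i}\sH_s$ by $(p^r)^{2d-i}=p^{r(2d-i)}$. Finally, the Tate twist $(d)$ can be removed since $[p^r]_!$ commutes with twists, giving the claim for $R^{i}\pi_!\ZZ/p^s\ZZ$.

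The main technical point will be verifying that the isomorphism \eqref{eq:Hopf-alg} is natural with respect to the functoriality $f_!$ for endomorphisms $f:G\to G$. Once this is settled, everything else is a formal consequence of the Hopf algebra formalism together with the elementary observation that $[p^r]$ restricts to multiplication by $p^r$ on torsion. The required naturality should follow directly from the construction of \eqref{eq:Hopf-alg} via Poincar\'e duality (as in Brylinski--Szczarba) and the Hopf algebra structure coming from $\mu_!$ and $\Delta_!$, since these structures are manifestly preserved under group scheme endomorphisms.
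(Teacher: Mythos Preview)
Your proposal is correct and follows essentially the same approach as the paper: use Lemma~\ref{lemma:description-of-H} to identify $[p^{r}]_!$ on $\sH_s$ with multiplication by $p^{r}$, and then invoke the $\ZZ/p^{s}\ZZ$-version of the exterior algebra isomorphism~\eqref{eq:Hopf-alg} to deduce the action on each $R^{i}\pi_!\ZZ/p^{s}\ZZ$. The paper's proof is just a two-line version of yours, taking for granted the naturality of~\eqref{eq:Hopf-alg} that you spell out.
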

\begin{proof}
By Lemma \ref{lemma:description-of-H} we see that $[p^{r}]_!$ is given by $p^{r}$-multiplication on $\sH_s$. The result follows from this and the
$\ZZ/p^{s}\ZZ$-version of the isomorphism \eqref{eq:Hopf-alg}
\end{proof}

\begin{proof}[Proof of Theorem \ref{thm:coh-computation-of-cL}]
Consider the transition map $\Lambda_s[G_{r+j}]\to \Lambda_s[G_r]$.
If we apply $R^{i}\pi_!$ we get the homomorphism
\[
[p^{j}]_!:R^{i}\pi_{r+j,!}\Lambda_s\to R^{i}\pi_{r,!}\Lambda_s,
\]
where $\pi_r=\pi:G_r\to S$ is the structure map of $G_r=G$. By Lemma
\ref{lemma:trivializing-coh}, the map $[p^{j}]_!$ acts by multiplication with $p^{j(2d-i)}$ on $R^{i}\pi_{r+j,!}\Lambda_s$. In particular, this is zero for $i\neq 2d$ and $j\ge s$ and the identity for $i=2d$. This proves the theorem, because $R^{2d}\pi_!\Lambda_s(d)\isom \Lambda_s$.
\end{proof}
The sheaf $\cL$ satisfies also a property analogous to Theorem 
\ref{thm:sheaf-univ-property}. To formulate this properly, we first need a
property of unipotent $\ZZ/p^{s}\ZZ$-sheaves. 
\begin{lemma}\label{lemma:unipotent-trivialization}
Let $\sF$ be a unipotent $\Lambda_s=\ZZ/p^{s}\ZZ$-sheaf of length $n$ on $G$. Then $[p^{ns}]^{*}\sF$ is trivial on $G_{ns}$ in the sense that there exists a $\Lambda_s$-sheaf $\sG$ on $S$ such that
\[
[p^{ns}]^{*}\sF\isom \pi_{ns}^{*}\sG,
\]
where $\pi_{ns}:G_{ns}\to S$ is the structure map.
\end{lemma}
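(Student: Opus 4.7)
The plan is to induct on the unipotent length $n$, reducing to the case of a single length-$1$ extension. The base case $n=0$ is immediate: $\sF=\pi^*\sG^0$, so we take $\sG=\sG^0$, noting that $\pi_{ns}:G_{ns}\to S$ coincides with $\pi$ (since $G_{ns}=G$ as a scheme and $\pi\circ[p^{ns}]=\pi$). For $n\geq 1$, the defining filtration provides $0\to\sF^1\to\sF\to\pi^*\sG^0\to 0$ with $\sF^1$ unipotent of length $n-1$; applying the exact functor $[p^{(n-1)s}]^*$ and invoking the inductive hypothesis to trivialize $[p^{(n-1)s}]^*\sF^1\isom\pi^*\widetilde{\sG}^1$, we obtain
\[
0\to\pi^*\widetilde{\sG}^1\to [p^{(n-1)s}]^*\sF\to\pi^*\sG^0\to 0.
\]
It therefore suffices to prove the length-$1$ statement: any extension $0\to\pi^*\cA\to\sE\to\pi^*\cB\to 0$ with $\cA,\cB$ sheaves on $S$ becomes a pullback from $S$ after one further application of $[p^s]^*$.

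For the length-$1$ case, the Leray spectral sequence for $\pi:G\to S$ yields a five-term exact sequence whose relevant piece is an edge morphism
\[
\Ext^1_G(\pi^*\cB,\pi^*\cA)\longrightarrow \Hom_S(\cB,R^1\pi_*\pi^*\cA),
\]
whose kernel is precisely the pullback image of $\Ext^1_S(\cB,\cA)$; the unit section $e:S\to G$, satisfying $e^*\pi^*=\id$, even splits this injection. Consequently an extension class becomes a pullback from $S$ as soon as its image in $\Hom_S(\cB,R^1\pi_*\pi^*\cA)$ vanishes, so it suffices to show that $[p^s]^*$ acts as zero on $R^1\pi_*\pi^*\cA$.

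By the projection formula, $R^1\pi_*\pi^*\cA\isom\cA\otimes_{\Lambda_s}R^1\pi_*\Lambda_s$, and relative Poincar\'e duality identifies $R^1\pi_*\Lambda_s$ with the $\Lambda_s$-dual $\sH_s^\vee$ (up to a Tate twist), dualizing the Hopf-algebra description \eqref{eq:Hopf-alg} in degree $i=2d-1$. On $\sH_s^\vee$, the pullback $[p^s]^*$ acts as multiplication by $p^s$, dually to the action of $[p^s]_*$ on $\sH_s=R^{2d-1}\pi_!\Lambda_s(d)$ computed in Lemma \ref{lemma:trivializing-coh}. Since $p^s=0$ in $\Lambda_s$, the desired vanishing follows and the induction closes. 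The main technical obstacle is rigorously transporting Lemma \ref{lemma:trivializing-coh} (a statement about $R\pi_!$) to the required statement about $R\pi_*$, since $\pi$ is smooth but not proper; a robust alternative is to verify the vanishing stalkwise at geometric points $\bar s\in S$, where it reduces to the classical fact that on a smooth connected commutative group variety $[p^s]^*$ acts on $H^1_{\et}(-,\Lambda_s)\isom\sH_{s,\bar s}^\vee$ via its multiplication-by-$p^s$ action on the $p$-adic Tate module.
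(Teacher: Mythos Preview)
Your inductive scaffolding and reduction to the length-$1$ case are exactly what the paper does. The divergence is in how the length-$1$ extension is analysed: you use the adjunction $(\pi^{*},R\pi_{*})$ and the Leray five-term sequence, whereas the paper uses smoothness to write $\pi^{!}\isom\pi^{*}(d)[2d]$, hence
\[
\Ext^{1}_{G}(\pi^{*}\sG'',\pi^{*}\sG')\isom\Ext^{1}_{G}(\pi^{!}\sG'',\pi^{!}\sG')\isom\Ext^{1}_{S}(R\pi_{!}\pi^{!}\sG'',\sG'),
\]
and then applies the truncation triangle for $R\pi_{!}\Lambda_{s}(d)[2d]$ together with Lemma~\ref{lemma:trivializing-coh} directly. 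The payoff of the paper's route is that it stays entirely on the $R\pi_{!}$ side, where Lemma~\ref{lemma:trivializing-coh} lives, and needs no hypothesis on $\cA$ beyond constructibility.

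Your route has a genuine technical wrinkle that you partly flag. Since $\pi$ is not proper, the projection formula $R^{1}\pi_{*}\pi^{*}\cA\isom\cA\otimes R^{1}\pi_{*}\Lambda_{s}$ is not automatic for a merely constructible $\cA$, and your fallback of checking the vanishing of $[p^{s}]^{*}$ stalkwise is not quite right as stated: the stalk of $R^{1}\pi_{*}\Lambda_{s}$ at a geometric point $\bar s$ is $H^{1}$ of $G$ base-changed to the strict henselization of $S$ at $\bar s$, not $H^{1}(G_{\bar s},\Lambda_{s})$, so the ``classical fact'' about group varieties over algebraically closed fields does not apply without further base-change input. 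The clean fix is precisely the paper's manoeuvre: twist by $(d)[2d]$ to pass from $\pi^{*}$ to $\pi^{!}$ and use $(R\pi_{!},\pi^{!})$-adjunction, after which $[p^{s}]^{*}$ on the Ext-group is induced by the trace $[p^{s}]_{!}$ on $R\pi_{!}$ and Lemma~\ref{lemma:trivializing-coh} finishes the argument without any base-change issues.
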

\begin{proof}
We show this by induction. For $n=0$ there is nothing to show.
So let 
\[
0\to \sF'\to \sF\to \pi^{*}\sG''\to 0
\]
be an exact sequence with $\sF'$ unipotent of length $n-1$, so
that by induction hypotheses $[p^{(n-1)s}]^{*}\sF'\isom \pi^{*}\sG'$
on $G_{(n-1)s}$. Thus it suffices to show that for an extension
$\sF\in \Ext^{1}_{G}(\pi^{*}\sG'',\pi^{*}\sG')$, the sheaf 
$[p^{s}]^{*}\sF$ is trivial on $G_s$. One has 
\begin{align*}
\Ext^{1}_{G}(\pi^{*}\sG'',\pi^{*}\sG')\isom \Ext^{1}_{G}(\pi^{!}\sG'',\pi^{!}\sG')\isom \Ext^{1}_{S}(R\pi_!\pi^{!}\sG'',\sG')
\end{align*}
and the pull-back by $[p^{s}]^{*}$ on the first group is induced 
by the trace map $[p^{s}]_!:R\pi_![p^{s}]_![p^{s}]^{!}\pi^{!}\sG''\to R\pi_!\pi^{!}\sG''$ on the last group.
By the projection formula we have $R\pi_!\pi^{!}\sG''\isom R\pi_!\Lambda_s(d)[2d]\otimes \sG''$ and the triangle
\[
\tau_{<2d}R\pi_!\Lambda_s(d)[2d]\to R\pi_!\Lambda_s(d)[2d]\to R^{2d}\pi_!\Lambda_s(d)\isom \Lambda_s
\]
gives rise to  a long exact sequence of $\Ext$-groups
\[
\ldots\to\Ext^{1}_{S}(\sG'',\sG')\to
\Ext^{1}_{S}(R\pi_!\Lambda_s(d)[2d]\otimes\sG'',\sG')\to 
\Ext^{1}_{S}(\tau_{<2d}R\pi_!\Lambda_s(d)[2d]\otimes\sG'',\sG')\to \ldots
\]
If we pull-back by $[p^{s}]^{*}$ and use Lemma \ref{lemma:trivializing-coh} the resulting map on 
$\Ext^{1}_{S}(\tau_{<2d}R\pi_!\Lambda_s(d)[2d]\otimes\sG'',\sG')$ is 
zero, which shows that $[p^{s}]^{*}\sF$ is in the image of 
\[
\Ext^{1}_{S}(\sG'',\sG')\xrightarrow{[p^{s}]^{*}\pi^{*}}\Ext^{1}_{G}([p^{s}]^{*}\pi^{*}\sG'',[p^{s}]^{*}\pi^{*}\sG').
\]
This is the desired result.
\end{proof}
Exactly as in \eqref{eq:universal-property-map} one can define for each $\Lambda_s$-sheaf $\sF$ and 
each $r$ a homomorphism
\begin{equation}\label{eq:evaluation-map}
\pi_*\underline\Hom_G(\Lambda_s[G_r],\sF)\to e^*\sF 
\end{equation}
as the composition  
\[
\pi_*\underline{\Hom}_G(\cL_{\Lambda_s,r},\sF)\to \pi_*e_*e^{*}\underline{\Hom}_G(\cL_{\Lambda_s,r},\sF)
\to \underline{\Hom}_S(e^*\cL_{\Lambda_s,r},e^*\sF)\xrightarrow{{\bfone}^{*}}
\underline{\Hom}_S(\Lambda_s,e^*\sF)
\]
The next theorem corrects and generalizes 
\cite[Proposition 4.5.3]{Kings-Soule},
which was erroneously stated for all $\ZZ/p^{s}\ZZ$-sheaves and not just for unipotent ones.
\begin{theorem}[Universal property]\label{thm:integral-universal-property}
Let $\sF$ be a unipotent $\Lambda_s$-sheaf of length $n$. Then the homomorphism \eqref{eq:evaluation-map}
\[
\pi_*\underline\Hom_G(\Lambda_s[G_{ns}],\sF)\isom e^*\sF 
\]
is an isomorphism.
\end{theorem}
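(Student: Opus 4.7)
The strategy is to exploit the identity $\Lambda_s[G_{ns}] = [p^{ns}]_!\Lambda_s$ combined with Lemma \ref{lemma:unipotent-trivialization}, which renders $[p^{ns}]^{*}\sF$ a pullback from $S$. Since $[p^{ns}]$ is étale (as $p$ is invertible on the base), the adjunction $[p^{ns}]_!\dashv [p^{ns}]^{!}$ together with $[p^{ns}]^{!}=[p^{ns}]^{*}$ gives
\[
\underline\Hom_G(\Lambda_s[G_{ns}], \sF) \isom [p^{ns}]_*[p^{ns}]^{*}\sF,
\]
so after applying $\pi_*$ the theorem reduces to the claim that $\pi_{ns,*}[p^{ns}]^{*}\sF \isom e^{*}\sF$, where $\pi_{ns}:=\pi\circ [p^{ns}]\colon G_{ns}\to S$.

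Next I would apply Lemma \ref{lemma:unipotent-trivialization} to obtain an isomorphism $[p^{ns}]^{*}\sF \isom \pi_{ns}^{*}\sG$ for some $\Lambda_s$-sheaf $\sG$ on $S$, reducing matters to $\pi_{ns,*}\pi_{ns}^{*}\sG\isom \sG$. This is a stalkwise statement: at a geometric point $\overline{s}$, for $U$ ranging over strictly Henselian étale neighbourhoods, one has
\[
(\pi_{ns,*}\pi_{ns}^{*}\sG)_{\overline{s}}=\varinjlim_U \Gamma(G_U,\pi_{ns}^{*}\sG)=\sG_{\overline{s}},
\]
because $\sG|_U$ is constant with value $\sG_{\overline{s}}$ while $G_U$ is smooth over $U$ with connected closed fibre, hence itself connected. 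To identify $\sG$ with $e^{*}\sF$ I use the identity section $e_{ns}\colon S\to G_{ns}=G$, which satisfies $[p^{ns}]\circ e_{ns}=e$ and $\pi_{ns}\circ e_{ns}=\id_S$; pulling back the trivialization by $e_{ns}$ yields
\[
\sG=e_{ns}^{*}\pi_{ns}^{*}\sG\isom e_{ns}^{*}[p^{ns}]^{*}\sF=e^{*}\sF.
\]

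Finally I would verify that the resulting composite $\pi_{ns,*}[p^{ns}]^{*}\sF\to e^{*}\sF$, which is induced by the adjunction counit along $e_{ns}$, coincides with the evaluation map \eqref{eq:evaluation-map}. By Lemma \ref{lemma:base-change-for-cL} the section $\bfone$ corresponds to the delta at the identity of $G[p^{ns}]$, and under the adjunction identification $\Hom_G([p^{ns}]_!\Lambda_s,\sF)=\Gamma(G_{ns},[p^{ns}]^{*}\sF)$ pre-composition with $\bfone$ translates exactly into restriction along the identity section $e_{ns}$. The main obstacle will be this final bookkeeping, together with confirming that the exponent $ns$ is sharp: it is forced by the inductive proof of Lemma \ref{lemma:unipotent-trivialization}, where one gains a factor of $s$ at each of the $n$ steps, and it is precisely this mismatch which makes the unipotency hypothesis essential—without it the statement fails, as in the erroneous \cite[Proposition 4.5.3]{Kings-Soule}.
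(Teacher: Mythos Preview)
Your strategy coincides with the paper's: both use the adjunction for the \'etale map $[p^{ns}]$ to rewrite $\pi_*\underline\Hom_G(\Lambda_s[G_{ns}],\sF)$ as $\pi_{ns*}[p^{ns}]^{*}\sF$, invoke Lemma~\ref{lemma:unipotent-trivialization} to write $[p^{ns}]^{*}\sF\isom\pi_{ns}^{*}\sG$, and identify $\sG$ with $e^{*}\sF$ via $e_{ns}^{*}$. Your extra care in checking that the resulting isomorphism really is the evaluation map \eqref{eq:evaluation-map} is a point the paper leaves implicit.

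The one substantive difference is the computation of $\pi_{ns*}\pi_{ns}^{*}\sG$. You argue stalkwise, asserting that on a strictly Henselian neighbourhood $U$ the sheaf $\sG|_U$ is constant with value $\sG_{\overline s}$. This is not true in general: $\sG$ is only constructible, and on a Henselian local base a constructible sheaf need not be constant (e.g.\ $j_!\Lambda_s$ for the inclusion of the open point). Your conclusion is nonetheless correct---a section of $\pi_{ns}^{*}\sG$ is constant along each geometric fibre because those fibres are connected and the restriction of $\pi_{ns}^{*}\sG$ to a fibre is genuinely constant, so comparing with the value along $e_{ns}$ shows every section is pulled back---but the argument needs this repair. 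The paper avoids the issue entirely by passing to the $!$-adjunction: writing $\pi_{ns}^{*}\sG=\pi_{ns}^{!}\sG(-d)[-2d]$ and using local Verdier duality gives
\[
\pi_{ns*}\underline\Hom_{G_{ns}}(\Lambda_s,\pi_{ns}^{*}\sG)\isom \underline\Hom_S\bigl(R\pi_{ns!}\Lambda_s(d)[2d],\sG\bigr)\isom \underline\Hom_S\bigl(R^{2d}\pi_{ns!}\Lambda_s(d),\sG\bigr)\isom \sG,
\]
where the middle step uses that $R\pi_{ns!}\Lambda_s(d)[2d]$ lives in nonpositive degrees with $H^{0}\isom\Lambda_s$ by the trace. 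This route works uniformly for arbitrary constructible $\sG$ without any stalk discussion.
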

\begin{proof}
Let $\sF$ be unipotent of length $n$. Then we know from 
Lemma \ref{lemma:unipotent-trivialization} that there is 
a $\Lambda_s$-sheaf $\sG$ on $S$ such that $[p^{ns}]^{*}\sF\isom \pi^{*}_{ns}\sG$, where $\pi_{ns}:G_{ns}\to S$ is the structure map. Similarly, we write $e_{ns}$ for the unit section of $G_{ns}$. Then one has
\[
e^{*}\sF\isom e^{*}_{ns}[p^{ns}]^{*}\sF\isom e^{*}_{ns}\pi_{ns}^{*}\sG\isom \sG.
\]
Further, one has the following chain of isomorphisms
\begin{align*}
\pi_*\underline{\Hom}_{G}(\Lambda_s[G_{ns}],\sF)=
\pi_*\underline{\Hom}_{G}([p^{ns}]_!{\Lambda_s},\sF)&\isom
\pi_{ns*}\underline{\Hom}_{G_{ns}}(\Lambda_s,[p^{ns}]^{*}\sF)\\
&\isom \pi_{ns*}\underline{\Hom}_{G_{ns}}(\Lambda_s,\pi_{ns}^{*}\sG)\\
&\isom\underline{\Hom}_{S}(R\pi_{ns!}\Lambda_s(d)[2d],\sG)\\
&\isom\underline{\Hom}_{S}(R^{2d}\pi_{ns!}\Lambda_s(d),\sG)\\
&\isom \sG\isom e^{*}\sF,
\end{align*}
which prove the theorem.
\end{proof}
\subsection{The integral \'etale poylogarithm}
In this section we define in complete analogy with the $\Qp$-case the
integral \'etale polylogarithm. 

We recall the set-up from section
\ref{sec:Qp-polylog}. Denote by $c>1$ an integer invertible on $S$ and
prime to $p$ and 
let $D:=G[c]$ be the $c$-torsion subgroup. Then the  localization triangle for $j_D:U_D\subset G$ and $\iota_D:D\to G$ reads
\[
R\pi_!\cL(d)[2d-1]\to R\pi_!Rj_{D*}j_D^{*}\cL(d)[2d-1]\to \pi_{D!}\iota_D^{!}\cL(d).
\]
By relative purity and the splitting principle $\iota_D^{!}\cL(d)[2d]\isom \iota_D^{*}\cL\isom \pi_D^{*}\Lambda(\sH)$. 
We apply the functor $H^{j}(S,-)$ to this triangle. As the
$R^{i}\pi_!\cL$ are Mittag-Leffler zero for $i\neq 2d$
by Theorem \ref{thm:coh-computation-of-cL} one gets with \eqref{eq:vanishing-MLzero}:
\begin{proposition}
In the above situation there is a short exact sequence
\[
0\to H^{2d-1}(S,R\pi_!Rj_{D*}j_D^{*}\cL(d))\xrightarrow{\res} H^{0}(S,\pi_{D!}\pi_D^{*}\Lambda(\sH))\to H^{0}(S,\Zp)\to 0.
\]
\end{proposition}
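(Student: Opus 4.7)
The plan is to apply $H^{*}(S,-)$ to the localization triangle displayed just above and read off the claimed exact sequence from the resulting long exact sequence, using Theorem \ref{thm:coh-computation-of-cL} to evaluate the two relevant cohomology groups of $R\pi_{!}\cL(d)$.

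First I would compute $H^{n}(S,R\pi_{!}\cL(d))$ for $n=2d-1$ and $n=2d$. By Theorem \ref{thm:coh-computation-of-cL} the sheaves $R^{q}\pi_{!}\cL$ are Mittag-Leffler zero for $q<2d$ and $R^{2d}\pi_{!}\cL(d)\isom\Zp$. Thanks to the vanishing \eqref{eq:vanishing-MLzero}, the Mittag-Leffler zero terms contribute nothing to continuous étale cohomology, so the hypercohomology spectral sequence
\[
E_{2}^{p,q}=H^{p}(S,R^{q}\pi_{!}\cL(d))\Rightarrow H^{p+q}(S,R\pi_{!}\cL(d))
\]
collapses onto its row $q=2d$ and gives $H^{n}(S,R\pi_{!}\cL(d))\isom H^{n-2d}(S,\Zp)$. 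In particular $H^{2d-1}(S,R\pi_{!}\cL(d))=0$ and $H^{2d}(S,R\pi_{!}\cL(d))=H^{0}(S,\Zp)$. Inserting these into the long exact sequence of the triangle immediately produces the four-term exact sequence
\[
0\to H^{2d-1}(S,R\pi_{!}Rj_{D*}j_{D}^{*}\cL(d))\xrightarrow{\res} H^{0}(S,\pi_{D!}\pi_{D}^{*}\Lambda(\sH))\to H^{0}(S,\Zp)\to\cdots,
\]
which already gives the injectivity of $\res$ claimed in the proposition.

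It remains to establish surjectivity of the rightmost map. The key observation is that the zero section $e:S\to G$ lies in $D=G[c]$, so it factors as a section $e_{D}:S\to D$ of $\pi_{D}$, cutting out $e_{D}(S)$ as a clopen summand of $D$. Extension by zero from this component produces a map $\Zp\to\pi_{D!}\Zp$ that splits the trace $\pi_{D!}\Zp\to\Zp$, and composing with the unit $\Zp\to\Lambda(\sH)$ (the map $1\mapsto\delta_{0}$, which is the canonical section $\bfone$ under the identification $e^{*}\cL\isom\Lambda(\sH)$ of Lemma \ref{lemma:base-change-for-cL}) yields a global section of $\pi_{D!}\pi_{D}^{*}\Lambda(\sH)$ mapping to $1\in H^{0}(S,\Zp)$.

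The one step that requires genuine care is matching the connecting homomorphism in the long exact sequence with the composition of the trace along $\pi_{D}$ and the augmentation $\mom^{0}:\Lambda(\sH)\to\Zp=\Gamma_{0}(\sH)$; I expect this to be the main (but modest) technical point, following by naturality from the fact that the isomorphism $R^{2d}\pi_{!}\cL(d)\isom\Zp$ of Theorem \ref{thm:coh-computation-of-cL} is itself induced by the trace map on $\pi$. Once this identification is in place, the splitting provided by the zero section is automatic and finishes the proof.
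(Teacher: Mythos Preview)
Your proposal is correct and follows the paper's approach exactly: apply $H^{*}(S,-)$ to the localization triangle and invoke Theorem \ref{thm:coh-computation-of-cL} together with \eqref{eq:vanishing-MLzero}. The paper compresses all of this into a single sentence, so your explicit hypercohomology spectral sequence argument and the surjectivity check via the unit section $e\in D$ simply spell out details the paper leaves to the reader.
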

As in the $\Qp$-case we define 
\[
\Zp[D]^{0}:=\ker\left( H^{0}(S,\pi_{D!}\pi_D^{*}\Zp)\to H^{0}(S,\Zp)\right)
\]
so that one has 
\[
\Zp[D]^{0}\subset \ker\left(H^{0}(S,\pi_{D!}\pi_D^{*}\Lambda(\sH))\to H^{0}(S,\Zp)\right).
\]
With these preliminaries we can define the integral polylogarithm.
\begin{definition}
The \emph{integral \'etale polylogarithm}  associated to $\alpha\in \Zp[D]^{0}$ is the unique class 
\[
_\alpha\cpol\in H^{2d-1}(S,R\pi_!Rj_{D*}j_D^{*}\cL(d))
\]
such that $\res(_\alpha\cpol)=\alpha$.
\end{definition}

\subsection{The Eisenstein-Iwasawa class} Recall that $D=G[c]$ and
let $t:S\to U_D=G\setminus D$ be an $N$-torsion section with $(N,c)=1$
but $N$ not necessarily prime to $p$. The same chain of maps as
in $\eqref{eq:t-evaluation}$ gives a map
\begin{equation}\label{eq:integral-t-evaluation}
H^{2d-1}(S,R\pi_!Rj_{D*}j_D^{*}\cL(d))\to H^{2d-1}(S,t^{*}\cL(d))\isom H^{2d-1}(S,\Lambda(\sH\langle t\rangle)(d)).
\end{equation}
By functoriality the $N$-multiplication induces a homomorphism
\[
[N]_\#:\Lambda(\sH\langle t\rangle)\to\Lambda(\sH).
\]
\begin{definition}\label{def:Eisenstein-Iwasawa}Let $\alpha\in \Zp[D]^{0}$ and $t:S\to U_D$ be an $N$-torsion section. Then the
image
\[
_\alpha\EI(t)\in H^{2d-1}(S, \Lambda(\sH\langle t\rangle)(d))
\]
of $_\alpha\cpol$ under the map  \eqref{eq:integral-t-evaluation} is
called the \emph{Eisenstein-Iwasawa class}. We write 
\[
_\alpha\EI(t)_N:=[N]_\#(_\alpha\EI(t))\in H^{2d-1}(S, \Lambda(\sH)(d)).
\]
\end{definition}
\begin{remark}
Note that $_\alpha\EI(t)_N$ depends on $N$ and not on $t$ alone. The
class $_\alpha\EI(t)_{NM}$ differs from $_\alpha\EI(t)_N$.
\end{remark}
The $k$-th moment map induces a homomorphism of cohomology groups
\begin{equation}
\mom^{k}:H^{2d-1}(S, \Lambda(\sH)(d))\to H^{2d-1}(S,\Gamma_k(\sH)(d)).
\end{equation}
\begin{definition}
The class
\[
_\alpha\Eis^{k}_N(t):=\mom^{k}({_\alpha}\EI_N)\in H^{2d-1}(S,\Gamma_{k}(\sH)(d))
\]
is called the \emph{integral \'etale Eisenstein class}.
\end{definition}
These Eisenstein classes are interpolated by the Eisenstein-Iwasawa class by definition. We will see later how they are related to the $\Qp$-Eisenstein class, which are motivic, i.e., in the image of the \'etale regulator from motivic cohomology.
\subsection{The Eisenstein-Iwasawa class for abelian schemes}
\label{sec:abelian-schemes}
It is worthwhile to consider the case of abelian schemes in more detail. In this section we let $G=A$ be an abelian scheme over $S$, so that in particular $\pi:A\to S$ is proper and we can write $R\pi_*$ instead of $R\pi_!$. 

The first thing to  observe is the isomorphism 
\[
H^{2d-1}(S,R\pi_!Rj_{D*}j_D^{*}\cLog(d))\isom H^{2d-1}(U_D,\cLog(d)),
\]
so that the $\Qp$-polylogarithm is a class 
\[
_\alpha\cpol_\Qp\in H^{2d-1}(U_D,\cLog(d)).
\]
Evaluation at the $N$-torsion section $t:S\to U_D$ is just the pull-back with $t^{*}$
\[
t^{*}_\alpha\cpol_\Qp\in H^{2d-1}(S,t^{*}\cLog(d))\isom H^{2d-1}(S,\prod_{k\ge 0}\Sym^{k}\sH_\Qp(d))
\]
and the $k$-th component of $t^{*}_\alpha\cpol_\Qp$ is $_\alpha\Eis_\Qp^{k}(t)$. 

There is one specific choice of $\alpha$ which is particularly important, which we define next. Consider the finite \'etale morphism $\pi_D:G[c]\to S$  and the unit section $e:S\to G[c]$. These induce
\[
e_*:H^{0}(S,\Qp)\to H^{0}(S,\pi_{D*}\Qp)
\]
(coming from $\pi_{D*}e_!e^{!}\Qp\to\pi_{D*}\Qp$) and 
\[
\pi_D^{*}:H^{0}(S,\Qp)\to H^{0}(S,\pi_{D*}\Qp).
\]
One checks easily that $e_*(1)-\pi^{*}_D(1)$ is in the kernel of 
$H^{0}(S,\pi_{D*}\Qp)\to H^{0}(S,\Qp)$.
\begin{definition}
Let $\alpha_c\in \Qp[D]^{0}$ be the class
\[
\alpha_c:=e_*(1)-\pi^{*}_D(1).
\]
We write $_c\cpol_\Qp$ and $_c\Eis^{k}_\Qp(t)$ for the polylogarithm
and the Eisenstein class defined with $\alpha_c$.
\end{definition} 
We now assume that $S$ is of finite type over $\Spec\ZZ$. Then 
$H^{2d-2}(A_r\setminus A_r[cp^{r}],\ZZ/p^{r}\ZZ(d))$ is finite, so that one has by \eqref{eq:cont-coh}
\[
H^{2d-1}(S,R\pi_!Rj_{D*}j_D^{*}\cL(d))\isom H^{2d-1}(A\setminus A[c],\cL(d))\isom
\prolim_r H^{2d-1}(A_r\setminus A_r[cp^{r}],\ZZ/p^{r}\ZZ(d))
\]
where, as before, $[p^{r}]:A_r=A\to A$ is the $p^{r}$-multiplication and  the transition maps are given by the trace maps. The integral \'etale polylogarithm is then a class
\[
_\alpha\cpol\in 
\prolim_r H^{2d-1}(A_r\setminus A_r[cp^{r}],\ZZ/p^{r}\ZZ(d)).
\]
In the special case where $A=E$ is an elliptic curve over $S$ it is
shown in \cite[Theorem 12.4.21]{Kings-Soule} that
\[
_c\cpol\in 
\prolim_r H^{1}(E_r\setminus E_r[cp^{r}],\ZZ/p^{r}\ZZ(d))
\]
is given by the inverse limit of  Kato's norm compatible elliptic units $_c\vartheta_E$. Unfortunately, we do not have such a description even in the case of abelian varieties of dimension $\ge 2$.
If we write $A[p^{r}]\langle t\rangle$ for  
the $A[p^{r}]$-torsor defined by diagram \eqref{eq:G-torsor-defn}, then
\[
_\alpha\EI(t)\in H^{2d-1}(S,t^{*}\cL(d))=\prolim_r H^{2d-1}(A[p^{r}]\langle t\rangle,\ZZ/p^{r}\ZZ(d))  
\]
where the inverse limit is again over the trace maps. 
\section{Interpolation of the $\Qp$-Eisenstein classes}
\subsection{An integral structure on $\Log{k}_\Qp$}
For the comparison between the integral $\cL$ and the $\Qp$-polylogarithm $\cLog_{\Qp}$ we need an intermediate object, which we define in this section. This is purely technical. The reason for 
this is as follows:
In general a unipotent $\Qp$-sheaf does not necessarily have a $\Zp$-lattice which is again a unipotent sheaf. In the case of 
$\Log{k}_\Qp$ however, it is even possible to construct a $\Zp$-structure
$\Log{k}$ such that 
\[
\Log{k}_{\Lambda_r}:=\Log{k}\otimes_\Zp \Lambda_r
\]
is a unipotent $\Lambda_r=\ZZ/p^{r}\ZZ$-sheaf.

Let $\Log{1}$ be the $\Zp$-sheaf defined in \ref{defn:log}
\begin{equation}
0\to\sH\to \Log{1}\to \Zp\to 0
\end{equation}
and denote by $\one{1}:\Zp\to e^{*}\Log{1}$ a fixed splitting.

\begin{definition}\label{def:integral-log}
We define
\[
\Log{k}:=\Gamma_k(\Log{1})
\]
as the $k$-th graded piece of 
the divided power algebra $\Gamma_\Zp(\Log{1})$. We 
further denote by 
\[
\one{k}:=\Gamma_k(\one{1}):\Zp\to \Log{k}
\]
the splitting induced by $\one{1}$.
\end{definition}

As $\Zp$ and $\sH$ are flat $\Zp$-sheaves (all stalks are $\Zp$-free), the $k$-th graded piece of 
the divided power algebra $\Gamma_k(\Log{1})$ has a filtration 
with graded pieces $\pi^{*}\Gamma_i(\sH)\otimes\Gamma_{k-i}(\Zp)$ (see \cite[V 4.1.7]{Illusie}). In particular, the $\Gamma_k(\Log{1})$ are unipotent
$\Zp$-sheaves of length $k$. By base change the same is true for the 
$\Lambda_r$-sheaf
\begin{equation}\label{eq:Lambda-r-Log}
\Log{k}_{\Lambda_r}:=\Log{k}\otimes_\Zp\Lambda_r.
\end{equation}
To define  transition maps 
\begin{equation}
\Log{k}\to \Log{k-1}
\end{equation}
we proceed as in Section \ref{sec:Qp-logarithm}. Consider $\Log{1}\to \Zp\oplus\Log{1}$ given by the canonical projection and the identity. Then we define
\begin{multline*}
\Log{k}=\Gamma_k(\Log{1})\to \Gamma_k(\Zp\oplus\Log{1})\isom
\bigoplus_{i+j=k}\Gamma_i(\Zp)\otimes\Gamma_j(\Log{1})\to\\
\to
\Gamma_1(\Zp)\otimes\Gamma_{k-1}(\Log{1})\isom \Log{k-1}
\end{multline*}
where we identify $\Gamma_1(\Zp)\isom\Zp$. A straightforward computation shows that $\one{k}\mapsto \one{k-1}$ under the transition map. 
\begin{definition}
We denote by $\cLog$ the pro-sheaf $(\Log{k})_{k\ge 0}$ with the above 
transition maps and let $\bfone:\Zp\to e^{*}\cLog$ be the splitting
defined by $(\one{k})_{k\ge 0}$.
\end{definition}
\begin{remark}
We would like to point out that, contrary to the $\Qp$-situation,
the pro-sheaf $(\Log{k})_{k\ge 0}$ is \emph{not} the correct
definition of the $\Zp$-logarithm sheaf. In fact, the correct integral logarithm sheaf is $\cL$. 
\end{remark}
\begin{proposition}\label{prop:integral-structure-of-log}
Denote by $\Log{k}\otimes \Qp$ the $\Qp$-sheaf associated to $\Log{k}$.
Then there is a canonical isomorphism
\[
\Log{k}_\Qp\isom \Log{k}\otimes \Qp
\]
which maps $\one{k}_\Qp$ to $\one{k}$.
\end{proposition}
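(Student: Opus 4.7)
The plan is to use the canonical algebra homomorphism $\Sym_\Zp(\Log{1})\to \Gamma_\Zp(\Log{1})$ provided by \eqref{eq:Sym-und-Gamma} and pass to the $k$-th graded component. This gives a natural morphism of $\Zp$-sheaves
\[
\phi_k:\Sym^k(\Log{1})\to \Gamma_k(\Log{1})=\Log{k},\qquad m^k\mapsto k!\,m^{[k]},
\]
and I will show that $\phi_k\otimes\Qp$ is the desired isomorphism.

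First, I tensor with $\Qp$. Since $\Log{1}$ is a $\Zp$-flat sheaf (its stalks are free $\Zp$-modules, as noted for $\sH$ and $\Zp$), the formation of $\Sym^k$ commutes with passage to the associated $\Qp$-sheaf, so $\Sym^k(\Log{1})\otimes\Qp\isom \Sym^k(\Log{1}\otimes\Qp)=\Sym^k(\Log{1}_\Qp)=\Log{k}_\Qp$ by definition. Thus $\phi_k\otimes\Qp$ becomes a morphism
\[
\Log{k}_\Qp\longrightarrow \Log{k}\otimes\Qp.
\]

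Second, I verify that this map is an isomorphism. This can be checked at stalks, and by the discussion following \eqref{eq:Sym-und-Gamma}, the canonical map $\Sym_A(M)\to\Gamma_A(M)$ is an isomorphism whenever $A$ is a $\QQ$-algebra. Applying this stalkwise, where the base is the local stalk of $\Qp$ (or extensions thereof at geometric points) and $M$ is the corresponding stalk of $\Log{1}_\Qp$, shows that $\phi_k\otimes\Qp$ is an isomorphism.

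Finally, I need to check compatibility of the splittings, i.e.\ that $\one{k}_\Qp$ is sent to $\one{k}$. By the functoriality of $\Sym^k\to\Gamma_k$ in the morphism $\one{1}:\Zp\to e^{*}\Log{1}$, there is a commutative diagram relating $\Sym^k(\one{1})$ and $\Gamma_k(\one{1})$ through the universal map $\phi_k$. The element $1\in\Zp\cong \Sym^k(\Zp)$ goes under $\Sym^k(\one{1})$ to $\one{1}^k$, which under $\phi_k$ maps to $k!\,\one{1}^{[k]}$. Since $\one{k}_\Qp=\tfrac{1}{k!}\Sym^k(\one{1})$ by definition, it is carried to $\tfrac{1}{k!}\cdot k!\,\one{1}^{[k]}=\one{1}^{[k]}=\Gamma_k(\one{1})(1)=\one{k}$, which is the required compatibility.

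The only real subtlety is the stalkwise/base-change argument that $\phi_k\otimes\Qp$ is an isomorphism; everything else is a direct unwinding of the definitions, and the factor $\tfrac{1}{k!}$ in $\one{k}_\Qp$ is exactly what is needed to cancel the factor $k!$ appearing in $\phi_k$.
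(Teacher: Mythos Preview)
Your proof is correct and follows essentially the same route as the paper: both use the canonical map $\Sym\to\Gamma$ from \eqref{eq:Sym-und-Gamma}, check it becomes an isomorphism after tensoring with $\Qp$ by a stalkwise computation, and verify the splitting compatibility via the explicit formula $m^{k}\mapsto k!\,m^{[k]}$. The only cosmetic difference is that the paper passes through $\Gamma_k(\Log{1}_\Qp)$ as an intermediate (using base change for $\Gamma$), whereas you apply $\phi_k$ integrally and then tensor; by naturality these yield the same isomorphism, and your treatment of the splittings is in fact more detailed than the paper's.
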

\begin{proof} First note that the canonical map $\Sym^{k}\Log{1}_\Qp\to \Gamma_k(\Log{1}_\Qp)$ is an isomorphism.
This can be checked at stalks, where it follows from \eqref{eq:Sym-und-Gamma} as $\Log{1}_\Qp$ is a sheaf of $\Qp$-modules. The claim in the proposition then follows from the isomorphisms
\[
\Log{k}_\Qp=\Sym^{k}\Log{1}_\Qp\isom \Gamma_k(\Log{1}_\Qp)\isom 
\Gamma_k(\Log{1})\otimes\Qp=\Log{k}\otimes\Qp
\]
and the claim about the splitting follows from the explicit formula for the map $\Sym^{k}\Log{1}_\Qp\to \Gamma_k(\Log{1}_\Qp)$ given after \eqref{eq:Sym-und-Gamma}.
\end{proof}
\begin{corollary}\label{cor:comp-isom}
For all $i$ there are isomorphisms
\begin{align*}
H^{i}(S,R\pi_!Rj_{D*}j_D^{*}\Log{k}(d))\otimes_\Zp\Qp &\isom 
H^{i}(S,R\pi_!Rj_{D*}j_D^{*}\Log{k}_\Qp(d))\\
H^{i}(S,\pi_{D!}\pi_D^{*}\prod_{i=0}^{k}\Gamma_i(\sH))\otimes_\Zp\Qp &\isom 
H^{i}(S,\pi_{D!}\pi_D^{*}\prod_{i=0}^{k}\Sym^{i}\sH_\Qp)\\
H^{i}(S,R\pi_!\Log{k}(d)[2d])\otimes_\Zp\Qp &\isom H^{i}(S,R\pi_!\Log{k}_\Qp(d)[2d])
\end{align*}
\end{corollary}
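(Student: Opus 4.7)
The plan is to combine Proposition \ref{prop:integral-structure-of-log} with the general fact that, on constructible $\Zp$-sheaves with finitely generated cohomology, the functor $(-)\otimes_\Zp\Qp$ commutes both with the six operations and with the passage to cohomology.

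First, I would identify, for each of the three isomorphisms, the $\Qp$-sheaf on the right as $(-)\otimes_\Zp\Qp$ of the integral sheaf on the left. For $\Log{k}$ this is exactly Proposition \ref{prop:integral-structure-of-log}; for the symmetric and divided powers of $\sH$ it follows from the observation that the natural map $\Sym_\Zp(M)\to\Gamma_\Zp(M)$ from \eqref{eq:Sym-und-Gamma} becomes an isomorphism after inverting $p$, applied stalkwise to $M=\sH$.

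Next, I would invoke the six-functor formalism of Ekedahl to check that the operations $R\pi_!$, $Rj_{D*}$, $j_D^*$, $\pi_{D!}$ and $\pi_D^*$ all commute with $(-)\otimes\Qp$: each preserves the subcategory of $\Zp$-torsion sheaves, and the category of $\Qp$-sheaves is by definition the Serre quotient by this subcategory. Consequently each of the three integral complexes appearing on the left, after tensoring with $\Qp$, is canonically isomorphic to the corresponding $\Qp$-complex appearing on the right, and we obtain canonical comparison maps at the level of cohomology.

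The last step is to appeal to the following principle: if $\sF\in\DD(S)$ is a constructible $\Zp$-complex with $H^i(S,\sF)$ finitely generated over $\Zp$, then
\[
H^i(S,\sF)\otimes_\Zp\Qp\isom H^i(S,\sF\otimes\Qp),
\]
since $(-)\otimes\Qp$ is exact on finitely generated $\Zp$-modules. The only real work is to verify finite generation in the three cases, and this will be the main technical point. It follows from the standing hypothesis on $S$ in \ref{section-Zp-sheaves}, the finiteness of $H^i(S,\sF_r)$ for constructible mod-$p^r$ sheaves (which forces the Mittag-Leffler property and the vanishing of the $\prolim^1$-term in Jannsen's sequence \eqref{eq:cont-coh}), together with a bound on the $\Zp$-ranks coming from the unipotent filtration on $\Log{k}$ and the constructibility of $\sH$. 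Once finite generation is in hand, the exactness of $(-)\otimes\Qp$ yields the three asserted isomorphisms simultaneously.
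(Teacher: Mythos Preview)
Your first two steps match the paper's argument exactly: Proposition~\ref{prop:integral-structure-of-log} for $\Log{k}$, and the stalkwise isomorphism $\Sym^k\sH_\Qp\isom\Gamma_k(\sH)\otimes\Qp$ coming from \eqref{eq:Sym-und-Gamma}, together with compatibility of the six operations with $(-)\otimes\Qp$. Where you diverge is in the final step.

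The paper disposes of that step in one phrase: ``the definition of the cohomology of a $\Qp$-sheaf''. In the framework set up in \S\ref{section-Zp-sheaves}, a $\Qp$-sheaf is by definition the image of a $\Zp$-sheaf in the quotient category modulo torsion, and its cohomology is \emph{defined} as $H^i(S,\sF)\otimes_\Zp\Qp$. So once you know $\Log{k}_\Qp\isom\Log{k}\otimes\Qp$ (and similarly for the other two complexes), the corollary is literally a restatement of the definition; there is nothing further to prove.

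You instead treat $H^i(S,\sF)\otimes\Qp\isom H^i(S,\sF\otimes\Qp)$ as a theorem to be established via finite generation, and you identify this as ``the main technical point''. This is both unnecessary and problematic. It is unnecessary for the reason just given. It is problematic because the finiteness of $H^i(S,\sF_r)$ for constructible $\ZZ/p^r\ZZ$-sheaves does \emph{not} hold in the generality of \S\ref{section-Zp-sheaves}: the base is allowed to be any noetherian regular scheme of dimension $\le 1$, so $S$ could for instance lie over an arbitrary field $k$, and already $H^1(\Spec k,\ZZ/p\ZZ)$ can be infinite. Your appeal to \eqref{eq:cont-coh} therefore does not go through in general, and the Mittag--Leffler/$\prolim^1$ argument you sketch is not available. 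Fortunately none of this is needed: simply replace your third paragraph with a reference to the definition of $\Qp$-cohomology and the proof is complete.
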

\begin{proof}
The first and the third follow directly from the proposition and the definition of the cohomology of a $\Qp$-sheaf. For the second one observes that the canonical map 
\[
\Sym^{k}\sH_{\Qp}\isom \Sym^{k}\sH\otimes\Qp\to\Gamma_k(\sH)\otimes\Qp\isom \Gamma_k(\sH_\Qp)
\] 
is an isomorphism. This can be checked on stalks, where it follows again from \eqref{eq:Sym-und-Gamma}.
\end{proof}
\subsection{Comparison of integral and $\Qp$-polylogarithm}
In this section we want to compare $\cL$ and $\cLog_{\Qp}$. We first
compare $\cL$ with the  sheaves $\Log{k}$ defined in \ref{def:integral-log}.  

Define a comparison map
\begin{equation*}
\comp^{k}:\cL\to \Log{k}
\end{equation*}
as follows. By Theorem 
\ref{thm:integral-universal-property} one has for the sheaves
$\Log{k}_{\Lambda_r}$ from \eqref{eq:Lambda-r-Log} the isomorphism
\[
\Hom_G(\Lambda_r[G_{rk}],\Log{k}_{\Lambda_r})\isom H^{0}(S,e^{*}\Log{k}_{\Lambda_r}),
\]
so that the splitting $\one{k}\otimes \Lambda_r:\Lambda_r\to e^{*}\Log{k}_{\Lambda_r}$ defines a morphism of sheaves on $G$
\begin{equation}\label{eq:comp-map}
\comp_r^{k}:\Lambda_r[G_{rk}]\to \Log{k}_{\Lambda_r},
\end{equation}
which is obviously compatible with the transition maps and functorial in $G$. Passing to the 
pro-systems over $r\ge 0$, this defines a homomorphism
\begin{equation}
\comp^{k}: \cL\to \Log{k}.
\end{equation}
Taking also the pro-system in the $k$-direction leads to a comparison map
\begin{equation}
\comp: \cL\to \cLog.
\end{equation}
For each $k$ applying $\comp^{k}$ to the localization triangle for $D\hookrightarrow G\hookleftarrow U_D$ gives 
\begin{equation}\label{eq:comp-and-localization}
\begin{CD}
R\pi_!Rj_{D*}j_D^{*}\cL(d)[2d-1]@>>>\pi_{D!}\pi_D^{*}\Lambda(\sH)@>>>
R\pi_!\cL(d)[2d]\\
@VV\comp^{k}V@VV\comp^{k}V@VV\comp^{k}V\\
R\pi_!Rj_{D*}j_D^{*}\Log{k}(d)[2d-1]@>>>\pi_{D!}\pi_D^{*}\Log{k}@>>>
R\pi_!\Log{k}(d)[2d]
\end{CD}
\end{equation}
compatible with the transition maps $\Log{k}\to \Log{k-1}$. 
\begin{proposition}
There is a commutative diagram of short exact sequences
\[
\xymatrix{
0\ar[r] & H^{2d-1}(S,R\pi_!Rj_{D*}j_D^{*}\cL(d))\ar[r]^{\res}\ar[d]^{\comp}& H^{0}(S,\pi_{D!}\pi_D^{*}\Lambda(\sH))\ar[r]\ar[d]^{e^{*}\comp} & H^{0}(S,\Zp)\ar[r] \ar[d]&0\\
0\ar[r] & H^{2d-1}(S,R\pi_!Rj_{D*}j_D^{*}\cLog_\Qp(d))\ar[r]^{\res}& H^{0}(S,\pi_{D!}\pi_D^{*}\prod_{k\ge 0}\Sym^{k}\sH_\Qp(d))\ar[r] & H^{0}(S,\Qp)\ar[r] & 0.
}
\]
\end{proposition}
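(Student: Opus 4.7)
The plan is to obtain each row as the long exact cohomology sequence of the localization triangle for $\iota_D: D \hookrightarrow G \hookleftarrow U_D$ applied to $\cL$ (top row) and $\cLog_\Qp$ (bottom row). The top row is precisely the exact sequence proved just before Definition \ref{def:Eisenstein-Iwasawa}: relative purity and the integral splitting principle identify $\pi_{D!}\iota_D^{!}\cL(d)$ with $\pi_{D!}\pi_D^{*}\Lambda(\sH)$, while Theorem \ref{thm:coh-computation-of-cL} combined with the Mittag-Leffler vanishing \eqref{eq:vanishing-MLzero} forces the terms coming from $R\pi_!\cL$ to vanish in all degrees except $2d$ (where they contribute $\Zp$). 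The bottom row is the analogous proposition from Section \ref{sec:Qp-polylog}, using Theorem \ref{thm:vanishing} in place of Theorem \ref{thm:coh-computation-of-cL} and the $\Qp$-splitting principle of Corollary \ref{cor:splitting_sheaf}.

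For the commutativity of the three squares, I would apply $H^{j}(S,-)$ to the already-established commutative morphism of distinguished triangles \eqref{eq:comp-and-localization}, which is compatible with the transition maps $\Log{k} \to \Log{k-1}$. This yields a commutative ladder at each finite level $k$. Passing to the inverse limit over $k$ and tensoring with $\Qp$ via Corollary \ref{cor:comp-isom} produces the diagram as stated. The right-hand square then commutes for free because $\comp^{k}$ induces the identity on $R^{2d}\pi_!(-)(d) \isom \Zp$ (the integral and $\Qp$-versions of the top nonvanishing cohomology sheaf being canonically $\Zp$, respectively $\Qp$, related by the natural inclusion).

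The main obstacle --- the only step that I expect to require real work --- is identifying the middle vertical map $e^{*}\comp$ under the splitting isomorphisms with the assembly of moment maps composed with $\Gamma_k(\sH)\otimes\Qp \isom \Sym^{k}\sH_\Qp$. By functoriality of $\comp$ this reduces, via $\iota_D^{*} = \pi_D^{*}\circ e^{*}$ after the splitting principle, to understanding $e^{*}\comp^{k}: \Lambda(\sH) \to \prod_{i=0}^{k}\Gamma_i(\sH)$. By the very definition \eqref{eq:comp-map}, $\comp^{k}$ corresponds through the universal property (Theorem \ref{thm:integral-universal-property}) to the splitting $\one{k}$. On the other hand, the moment map $\mom^{k}$ of Definition \ref{def:mom} is defined using the tautological section $\tau_r^{[k]}$ of \eqref{eq:tau}. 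I would argue that both $e^{*}\comp^{k}$ and the tuple $(\mom^{0},\ldots,\mom^{k})$ are maps $e^{*}\cL \to e^{*}\Log{k}$ compatible with the splittings $\bfone$ and $\one{k}$, hence must coincide by the uniqueness provided by Lemma \ref{lemma:e-upper-star}. Taking the limit over $k$ and tensoring with $\Qp$ then identifies $e^{*}\comp\otimes\Qp$ with the map featured in the bottom middle term, completing the verification.
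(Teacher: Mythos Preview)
Your first two paragraphs are essentially the paper's own proof: apply $H^{j}(S,-)$ to the morphism of triangles \eqref{eq:comp-and-localization}, tensor the lower row with $\Qp$, pass to the inverse limit over $k$, and invoke Corollary~\ref{cor:comp-isom}. That is all that is required here.

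Your third paragraph, however, addresses a question the proposition does not ask. The middle vertical arrow is simply \emph{defined} to be the map induced by $\comp^{k}$ (after the splitting isomorphisms on both sides); the proposition asserts nothing about its relation to the moment maps. The identification $\pr_k\circ e^{*}\comp^{k}=\mom^{k}$ is the content of the \emph{later} Proposition~\ref{prop:comp-and-mom}, which is proved by a separate argument (trivializing $[p^{rk}]^{*}\Log{k}_{\Lambda_r}$ and comparing with the tautological section). So you have not left a gap; rather, you have folded in the proof of a subsequent result that is logically independent of the commutativity statement at hand. For the present proposition you can stop after the second paragraph.
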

\begin{proof} Take the long exact cohomology sequence of the commutative diagram in \eqref{eq:comp-and-localization}, tensor the 
lower horizontal line with $\Qp$ and then pass to the inverse limit over $k$. Using 
the isomorphisms in Corollary \ref{cor:comp-isom} gives the commutative diagram as stated.
\end{proof}

\begin{corollary}\label{cor:pol-and-EI}
Let $\alpha\in \Zp[D]^{0}$, with $D=G[c]$ as before. Then one has
\[
\comp(_\alpha\cpol)={_\alpha}\cpol_\Qp
\]
in $H^{2d-1}(S,R\pi_!Rj_{D*}j_D^{*}\cLog_\Qp(d))$.
In particular, for every $N$-torsion section $t:S\to U_D$ one has 
\[
\comp(_\alpha\EI(t))=t^{*}({_\alpha}\cpol_\Qp).
\]
\end{corollary}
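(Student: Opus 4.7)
My plan is to deduce the first equality from the uniqueness characterization of the $\Qp$-polylogarithm. By the first proposition of section \ref{sec:Qp-polylog}, the $\Qp$-residue map fits into a short exact sequence and is in particular injective; since $\res({_\alpha}\cpol_\Qp) = \alpha$ by definition, it suffices to show $\res(\comp({_\alpha}\cpol)) = \alpha$. The commutative diagram in the proposition immediately preceding the corollary will then give
\[
\res(\comp({_\alpha}\cpol)) = (e^{*}\comp)(\res({_\alpha}\cpol)) = (e^{*}\comp)(\alpha),
\]
so everything reduces to identifying the action of $e^{*}\comp$ on the image of $\alpha$.

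By construction $\alpha\in\Zp[D]^{0}$ is incorporated into $H^{0}(S,\pi_{D!}\pi_{D}^{*}\Lambda(\sH))$ via the ring unit $\Zp\hookrightarrow\Lambda(\sH)$ (sending $1$ to the identity element of the group algebra), and its image in $H^{0}(S,\prod_{k\ge 0}\pi_{D!}\pi_{D}^{*}\Sym^{k}\sH_\Qp)$ factors through the canonical inclusion $\Qp = \Sym^{0}\sH_\Qp \hookrightarrow \prod_{k\ge 0}\Sym^{k}\sH_\Qp$. It therefore suffices to check that $e^{*}\comp$ sends the Iwasawa-algebra unit to $(1,0,0,\ldots)$. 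But this is forced by the very definition of $\comp^{k}$: in \eqref{eq:comp-map} the morphism $\comp_{r}^{k}$ is characterized, via the universal property of Theorem \ref{thm:integral-universal-property}, as the unique map pulling back under $e$ to the splitting $\one{k}$; and the identification \eqref{eq:log-splitting}, $e^{*}\Log{k}_\Qp \isom \prod_{i=0}^{k}\Sym^{i}\sH_\Qp$, sends $\one{k}$ to $(1,0,\ldots,0)$. This closes the argument for the first equality.

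The second equality will be a pure naturality statement. The integral evaluation map \eqref{eq:integral-t-evaluation} and its $\Qp$-analogue \eqref{eq:t-evaluation} are built from the same adjunctions (unit $\id \to Rt_{*}t^{*}$, equality $Rt_{*}=Rt_{!}$, and $\pi\circ t = \id$) applied functorially to $\cL$ and $\cLog_\Qp$ respectively, so they fit into a commutative square
\[
\begin{CD}
H^{2d-1}(S,R\pi_{!}Rj_{D*}j_{D}^{*}\cL(d)) @>t^{*}>> H^{2d-1}(S,t^{*}\cL(d)) \\
@VV\comp V @VV\comp V \\
H^{2d-1}(S,R\pi_{!}Rj_{D*}j_{D}^{*}\cLog_\Qp(d)) @>t^{*}>> H^{2d-1}(S,t^{*}\cLog_\Qp(d)).
\end{CD}
\]
Chasing ${_\alpha}\cpol$ around this square and invoking the first equality yields $\comp({_\alpha}\EI(t)) = t^{*}(\comp({_\alpha}\cpol)) = t^{*}({_\alpha}\cpol_\Qp)$.

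The only non-formal step is the unit-compatibility of $e^{*}\comp$ in the middle paragraph: once one unpacks that $\comp^{k}$ was defined precisely so that $e^{*}\comp^{k}$ carries the unit to $\one{k}$, and that $\one{k}$ corresponds to the canonical generator of $\Sym^{0}\sH_\Qp$ under \eqref{eq:log-splitting}, all remaining steps are immediate from the two commutative diagrams and the injectivity of $\res$.
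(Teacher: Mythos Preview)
Your proof is correct and follows the same approach as the paper: the first equality comes from the commutative diagram of the preceding proposition together with the uniqueness characterization of ${_\alpha}\cpol_\Qp$ via its residue, and the second from the compatibility of $\comp$ with the evaluation map at $t$. The paper compresses this into two sentences (``Immediate from the definition\ldots'' and ``comp is compatible with the evaluation map at $t$''); you have correctly unpacked the one non-tautological point, namely that $e^{*}\comp$ carries the unit of $\Lambda(\sH)$ to the degree-zero generator under the splitting \eqref{eq:log-splitting}, which is exactly how $\comp^{k}$ was built from $\one{k}$.
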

\begin{proof}
Immediate from the definition of $_\alpha\cpol$ and $_\alpha\cpol_\Qp$ and the commutative diagram in the proposition. The second statement follows from the first as $\comp$ is compatible with the evaluation map at $t$.
\end{proof}
\subsection{Interpolation of the $\Qp$-Eisenstein classes}
For our main result, we first have to relate the comparison map 
$\comp^{k}$ with the moment map $\mom^{k}$.
\begin{proposition}\label{prop:comp-and-mom}
The composition
\[
\Lambda(\sH)\xrightarrow{e^{*}(\comp^{k})} e^{*}\Log{k}\xrightarrow{\pr_k}{\Gamma}_k(\sH)
\]
coincides with the moment map $\mom^{k}$.
\end{proposition}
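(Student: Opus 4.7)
The goal is an equality of two morphisms of pro-sheaves $\Lambda(\sH) \to \Gamma_k(\sH)$ on $S$. The plan is to verify the identity on geometric stalks, reduce to the topological generators $\delta_h$ of the Iwasawa algebra, and then compute explicitly using the Kummer-theoretic description of $\Log{1}$. First, both $\pr_k \circ e^*(\comp^k)$ and $\mom^k$ are natural in $G/S$ and continuous $\Zp$-linear, so it suffices to check on stalks at geometric points $\bar s$. Setting $H := \sH_{\bar s}$, a free $\Zp$-module, we have $\Lambda(\sH)_{\bar s} = \Lambda(H) = \prolim_r \Lambda_r[H_r]$ and $\Gamma_k(\sH)_{\bar s} = \Gamma_k(H)$, and by the Remark following Definition \ref{def:mom}, the stalk of $\mom^k$ is the classical moment map $\delta_h \mapsto h^{[k]}$. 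Since the $\delta_h$ topologically generate $\Lambda(H)$, it suffices to show $\pr_k(e^*\comp^k(\delta_h)) = h^{[k]}$. Level-wise, this is the assertion that for each $r$ and each $h \in H_{rk}$,
\[
\pr_k(e^*\comp^k_r(\delta_h)) = \bar h^{[k]} \in \Gamma_k(\sH_r),
\]
where $\bar h$ is the reduction of $h$ mod $p^r$; this matches $\mom^k_r \circ \Tr$ applied to $\delta_h$.

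The main task is to identify $\comp^k_r$ explicitly. Under the adjunction $[p^{rk}]_! \dashv [p^{rk}]^*$, the morphism $\comp^k_r : \Lambda_r[G_{rk}] \to \Log{k}_{\Lambda_r}$ corresponds to a section $s^{(k)}_r \in \Gamma(G_{rk}, [p^{rk}]^*\Log{k}_{\Lambda_r})$, characterized by Theorem \ref{thm:integral-universal-property} as the unique such section whose value at $e$ is $\one{k}$. My key claim is that $s^{(k)}_r = [p^{(k-1)r}]^*\sigma_r^{[k]}$, where $\sigma_r \in \Gamma(G_r, [p^r]^*\Log{1}_{\Lambda_r})$ is the canonical section obtained as follows: by Lemma \ref{lemma:description-of-H}, the extension $\Log{1}_{\Lambda_r}$ classifies the Kummer $G[p^r]$-torsor $[p^r]: G_r \to G$; pulling back along $[p^r]$ trivializes this torsor via the diagonal, inducing a canonical splitting of $[p^r]^*\Log{1}_{\Lambda_r}$ normalized so that $\sigma_r(e) = \one{1}$. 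In the splitting $e^*\Log{1}_{\Lambda_r} \isom \Lambda_r \cdot \one{1} \oplus \sH_r$, one checks $\sigma_r(t) = \one{1} + [t]$ for a $p^r$-torsion point $t$, where $[t] \in \sH_r$ is its class. Taking divided powers yields a section of $[p^r]^*\Log{k}_{\Lambda_r} = \Gamma_k([p^r]^*\Log{1}_{\Lambda_r})$, whose value at $e$ is $\one{1}^{[k]} = \one{k}$, so the universal property forces the claimed identification.

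Granting this, the proof concludes by a direct computation. For $h \in H_{rk}$ viewed as a point of $G_{rk}$, $[p^{(k-1)r}](h)$ equals $\bar h \in \sH_r$ under the transition $\sH_{rk} \to \sH_r$, hence
\[
s^{(k)}_r(h) = \sigma_r(\bar h)^{[k]} = (\one{1} + \bar h)^{[k]} = \sum_{i+j=k} \one{1}^{[i]} \cdot \bar h^{[j]},
\]
which under the splitting $e^*\Log{k}_{\Lambda_r} \isom \bigoplus_{j=0}^k \Gamma_j(\sH_r)$ has $j$-th component $\bar h^{[j]}$; applying $\pr_k$ extracts $\bar h^{[k]}$, matching $\mom^k$ as required. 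The principal technical hurdle is the identification in the previous paragraph: reconciling the abstract characterization of $\Log{1}_{\Lambda_r}$ as the universal extension (Definition \ref{defn:log}) with its concrete Kummer realization from Lemma \ref{lemma:description-of-H}, and tracking how the diagonal trivialization of the pulled-back torsor translates into the explicit formula $\sigma_r(t) = \one{1} + [t]$; once this is in place, both the passage to divided powers and the projection onto the top graded piece are purely formal.
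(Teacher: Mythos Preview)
Your proof is correct and follows the same overall strategy as the paper: reduce to $\Lambda_r$-coefficients, pass via adjunction from $\comp^k_r$ to a section of $[p^{rk}]^*\Log{k}_{\Lambda_r}$, characterize that section uniquely by its value $\one{k}$ at the unit, exhibit an explicit candidate, and read off the $k$-th component. The difference lies in how the candidate is built. You construct a Kummer-theoretic splitting $\sigma_r$ of $[p^r]^*\Log{1}_{\Lambda_r}$, pull back and take divided powers, and then compute $\sigma_r(t)=\one{1}+[t]$ on torsion points; the paper instead invokes the trivialization $[p^{rk}]^*\Log{k}_{\Lambda_r}\cong \pi_{rk}^*\prod_i\Gamma_i(\sH_r)$ furnished by Lemma~\ref{lemma:unipotent-trivialization} and writes the section directly as $1\mapsto\sum_i\tau_r^{[i]}$ using the tautological class $\tau_r$ of~\eqref{eq:tau}. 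Since $\mom^k_r$ is \emph{defined} via $\tau_r^{[k]}$, the paper's route makes the final identification immediate and bypasses what you correctly flag as the principal technical hurdle --- reconciling the abstract definition of $\Log{1}_{\Lambda_r}$ with its Kummer realization. In fact your $\sigma_r$ restricted to $G[p^r]$ is exactly $\one{1}+\tau_r$ in the paper's notation, so the two candidates coincide; your detour through stalks, topological generators, and Kummer theory is more explicit but not needed. One minor point: the paper appeals to Lemma~\ref{lemma:e-upper-star} rather than Theorem~\ref{thm:integral-universal-property} for uniqueness of the section, though either works.
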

\begin{proof}
By the definitions of $\mom^{k}$ and $\comp^{k}$ it suffices to
prove this statement for $\Lambda_r$-coefficients. Consider 
\[
\comp_r^{k}:\Lambda_r[G_{rk}]\to \Log{k}_{\Lambda_r}
\]
from \eqref{eq:comp-map}. This comes by adjunction from a map
\[
\beta_r:\Lambda_r\to [p^{rk}]^{*}\Log{k}_{\Lambda_r},
\]
on $G_{rk}$
which 
has by definition the property that its pull-back $e_{rk}^{*}(\beta_r)$ coincides with $\one{k}:\Lambda_r\to e^{*}\Log{k}_{\Lambda_r}$.
By Lemma \ref{lemma:e-upper-star} the map $\beta_r$ is uniquely determined by this property. As $\Log{k}_{\Lambda_r}$ is unipotent of 
length $k$, the pull-back $[p^{rk}]^{*}\Log{k}_{\Lambda_r}$ 
is trivial by Lemma \ref{lemma:unipotent-trivialization} and is hence
of the form 
\[
[p^{rk}]^{*}\Log{k}_{\Lambda_r}\isom \pi_{rk}^{*}e^{*}\Log{k}_{\Lambda_r} \isom \pi_{rk}^{*}\prod_{i=0}^{k}\Gamma_i(\sH_r),
\]
where the last isomorphism is obtained by the splitting $\one{k}$.
Thus the map 
\begin{align*}
\Lambda_r\to [p^{rk}]^{*}\Log{k}_{\Lambda_r}\isom \pi_{rk}^{*}\prod_{i=0}^{k}\Gamma_i(\sH_r)&& 1\mapsto \sum_{i=0}^{k}\tau_r^{[i]},
\end{align*}
where $\tau_r^{[i]}$ is the $i$-th divided power of the tautological section from \eqref{eq:tau}, has the property  that 
its pull-back by $e_{rk}^{*}$ coincides with $\one{k}$. It follows
that this map equals $\beta_r$ and by definition of the moment map
in \eqref{eq:mom-k-r} the projection to the $k$-th component coincides also with the moment map.
\end{proof}
Let $t:S\to U_D$ be an $N$-torsion section. 
We need  a compatibility between the composition
\[
\mom^{k}_N:=\mom^{k}\circ [N]_\#:\Lambda(\sH\langle t\rangle)\to\Lambda(\sH\langle t\rangle)\to \Gamma_{k}(\sH)
\]
and the map $\varrho_t$ in the splitting principle \ref{cor:splitting_sheaf} composed with
the projection onto the $k$-th component
\[
\pr_k\circ\varrho_t:t^{*}\Log{k}_\Qp\isom \prod_{i=0}^{k}\Sym^{k}\sH_\Qp\to\Sym^{k}\sH_\Qp.
\]
\begin{proposition}\label{prop:mom-and-varrho}
There is a commutative diagram 
\[
\begin{CD}
H^{2d-1}(S,\Lambda(\sH\langle t\rangle)(d) )@>\mom_N^{k}>>
H^{2d-1}(S,{\Gamma}_{k}(\sH)(d))\\
@Vt^{*}\comp^{k} VV@VVV\\
H^{2d-1}(S,t^{*}\Log{k}_\Qp(d))@>N^{k}\pr_k\circ\varrho_t>> H^{2d-1}(S,{\Sym}^{k}\sH_\Qp(d),
\end{CD}
\]
where $\mom_N^{k}=\mom^{k}\circ [N]_\#$ and 
$\varrho_t=[N]_\#^{-1}\circ [N]_\#$.
\end{proposition}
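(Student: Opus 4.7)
The plan is to reduce the commutativity of the diagram to Proposition \ref{prop:comp-and-mom} via naturality of $\comp^k$ under $[N]\colon G\to G$, combined with an explicit computation of how $e^{*}[N]_\#$ acts on the canonical splitting of $e^{*}\Log{k}_\Qp$.

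First I would prove that $\comp^k$ is functorial in $G$: for any homomorphism $\varphi\colon G_1\to G_2$ of $S$-group schemes, the diagram
\[
\begin{CD}
\cL_1 @>\comp^k>> \Log{k}_1\\
@V\varphi_\# VV @VV\varphi_\# V\\
\varphi^{*}\cL_2 @>\varphi^{*}\comp^k>> \varphi^{*}\Log{k}_2
\end{CD}
\]
commutes. Both compositions are morphisms $\cL_1\to\varphi^{*}\Log{k}_2$; by Theorem \ref{thm:integral-universal-property} (applied level by level in the pro-system, noting that $\varphi^{*}\Log{k}_2$ is still unipotent) such a morphism is determined by its pullback under $e_1^{*}$. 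Since $\comp^k$ sends $\bfone$ to $\one{k}$ and both $\varphi_\#$-maps preserve these sections, the two compositions agree. Specialising $\varphi=[N]$, pulling back by $t$, and using $[N]\circ t=e$, this yields
\[
e^{*}\comp^k\circ (t^{*}[N]_\#)=(t^{*}[N]_\#)\circ t^{*}\comp^k\colon t^{*}\cL\longrightarrow e^{*}\Log{k}.
\]
Combined with Proposition \ref{prop:comp-and-mom} (which identifies $\mom^k$ with $\pr_k\circ e^{*}\comp^k$), the top composition in the statement becomes $\pr_k\circ (t^{*}[N]_\#)\circ t^{*}\comp^k$, landing in $\Gamma_k(\sH)$ and then in $\Sym^{k}\sH_\Qp$ via $\Gamma_k(\sH)\otimes\Qp\isom\Sym^{k}\sH_\Qp$.

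For the bottom composition I would unwind $\varrho_t$ using Corollary \ref{cor:splitting_sheaf}; in our setting it factors as
\[
t^{*}\cLog_\Qp\xrightarrow{t^{*}[N]_\#}e^{*}[N]^{*}\cLog_\Qp\xrightarrow{(e^{*}[N]_\#)^{-1}}e^{*}\cLog_\Qp\isom \prod_{i\ge 0}\Sym^{i}\sH_\Qp.
\]
The crucial calculation is the action of $e^{*}[N]_\#$ on this splitting: because $[N]_\#\colon\Log{1}_\Qp\to [N]^{*}\Log{1}_\Qp$ is the identity on the quotient $\Qp$ and $N$-multiplication on the subobject $\pi^{*}\sH_\Qp$ (the latter because the homology functoriality $\sH\to\sH$ induced by $[N]$ equals $N\cdot$), taking $\Sym^k$ and pulling back by $e$ shows that $e^{*}[N]_\#$ acts diagonally as $N^{i}$ on the summand $\Sym^{i}\sH_\Qp$. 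Hence
\[
\pr_k\circ (e^{*}[N]_\#)^{-1}\circ (\text{splitting})=N^{-k}\pr_k\circ (\text{splitting}),
\]
so the prefactor $N^{k}$ in the statement exactly compensates, and the bottom composition reduces to $\pr_k\circ (t^{*}[N]_\#)\circ t^{*}\comp^k$, matching the top via $\Gamma_k(\sH)\otimes\Qp\isom\Sym^{k}\sH_\Qp$.

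The main obstacle is the functoriality step: although conceptually direct, it requires care with the indexing $r$ in $\comp^k_r\colon\Lambda_r[G_{rk}]\to\Log{k}_{\Lambda_r}$ when pulled back by $[N]$, since $[N]$ shifts the pro-system, so one must check that the uniqueness argument via Theorem \ref{thm:integral-universal-property} applies uniformly. The remaining pieces are straightforward diagram-chasing together with the explicit $N$-multiplication action of $[N]_\#$ on $\sH$.
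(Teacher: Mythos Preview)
Your proposal is correct and follows essentially the same route as the paper: both arguments use functoriality of $\comp^{k}$ under $[N]$ to reduce to the unit section, then invoke Proposition~\ref{prop:comp-and-mom} together with the computation that $[N]_\#$ acts as $N^{k}$ on the $k$-th graded piece $\Sym^{k}\sH_\Qp$. The paper simply asserts the functoriality of $\comp^{k}$ (it was noted as ``functorial in $G$'' immediately after \eqref{eq:comp-map}), whereas you supply a justification via the universal property; your concern about $[N]$ ``shifting the pro-system'' is not a genuine obstacle, since $[N]$ commutes with $[p^{r}]$ and the functoriality holds level by level.
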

\begin{proof}
The  commutative diagram
\[
\begin{CD}
H^{2d-1}(S,\Lambda(\sH\langle t \rangle)(d))@>[N]_\#>>H^{2d-1}(S,\Lambda(\sH)(d))\\
@Vt^{*}\comp^{k} VV@VVe^{*}\comp^{k} V\\
H^{2d-1}(S,t^{*}\Log{k}_\Qp(d))@>[N]_\#>\isom>H^{2d-1}(S,e^{*}\Log{k}_\Qp(d))
\end{CD}
\]
coming from functoriality of $\comp^{k}$ and the isomorphisms
\begin{align*}
H^{2d-1}(S,t^{*}\Log{k}(d))\otimes_\Zp\Qp&\isom 
H^{2d-1}(S,t^{*}\Log{k}_\Qp(d))\\
H^{2d-1}(S,e^{*}\Log{k}(d))\otimes_\Zp\Qp&\isom 
H^{2d-1}(S,e^{*}\Log{k}_\Qp(d))
\end{align*}
reduces the proof of the proposition to show the commutativity of the diagram
\[
\begin{CD}
H^{2d-1}(S,\Lambda(\sH)(d) )@>\mom^{k}>>
H^{2d-1}(S,{\Gamma}_{k}(\sH)(d))\\
@Ve^{*}\comp^{k} VV@VVV\\
H^{2d-1}(S,e^{*}\Log{k}_\Qp(d))@>N^{k}\pr_k\circ [N]_\#^{-1}>> H^{2d-1}(S,{\Sym}^{k}\sH_\Qp(d).
\end{CD}
\]
The isogeny $[N]$ acts by $N$-multiplication on $\sH$, hence by multiplication with $N^{k}$ on $\Sym^{k}\sH_\Qp$, which means that
\[
\pr_k\circ [N]_\#^{-1}=[N]_\#^{-1}\circ \pr_k=N^{-k}\pr_k.
\]
Thus it remains to show that the diagram 
\begin{equation*}
\begin{CD}
H^{2d-1}(S,\Lambda(\sH)(d))@>\mom^{k}>> H^{2d-1}(S,\Gamma_k(\sH)(d))\\
@Ve^{*}\comp^{k}VV@VVV\\
H^{2d-1}(S,e^{*}\Log{k}_\Qp(d))@>\pr_k>>H^{2d-1}(S,\Sym^{k}\sH_\Qp(d))
\end{CD}
\end{equation*}
commutes, which follows from  Proposition \ref{prop:comp-and-mom}
and the isomorphism 
\[
H^{2d-1}(S,\Gamma_k(\sH)(d))\otimes_\Zp\Qp\isom H^{2d-1}(S,\Sym^{k}\sH_\Qp(d))
\]
which was obtained in Corollary \ref{cor:comp-isom}.
\end{proof}

Recall from Definition \ref{def:Eisenstein-Iwasawa} the Eisenstein-Iwasawa class
\[
_\alpha\EI(t)_N=[N]_\#(_\alpha\EI(t))\in H^{2d-1}(S, \Lambda(\sH)(d))
\]
and from \ref{def:Qp-Eisenstein-class} the $\Qp$-Eisenstein class
\[
_\alpha\Eis^{k}_{\Qp}(t)\in H^{2d-1}(S,\Sym^{k}\sH_\Qp).
\]
We consider its image under the $k$-th moment map 
\[
\mom^{k}:H^{2d-1}(S,\Lambda(\sH)(d) )\to 
H^{2d-1}(S,{\Gamma}_{k}(\sH)(d)).
\]
The main result of this paper can now be formulated as follows:
\begin{theorem}[Interpolation of $\Qp$-Eisenstein classes]\label{thm:interpolation}
The image of $_\alpha\EI(t)_N$ under the $k$-th moment map is given by
\[
\mom^{k}(_\alpha\EI(t)_N)={N^{k}}{_\alpha}\Eis_\Qp^{k}(t).
\]
\end{theorem}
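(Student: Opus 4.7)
The plan is to obtain the identity by chasing the Eisenstein-Iwasawa class around the commutative square of Proposition \ref{prop:mom-and-varrho}, using Corollary \ref{cor:pol-and-EI} to identify the $\Qp$-side. Both of these inputs are in place, so the theorem should come out as a short diagram chase.

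First I would apply Proposition \ref{prop:mom-and-varrho} to the class $_\alpha\EI(t) \in H^{2d-1}(S, \Lambda(\sH\langle t\rangle)(d))$. Going along the top edge and then down the right-hand isomorphism $H^{2d-1}(S,\Gamma_k(\sH)(d)) \otimes \Qp \isom H^{2d-1}(S,\Sym^k\sH_\Qp(d))$ from Corollary \ref{cor:comp-isom} produces $\mom^k([N]_\#({_\alpha}\EI(t))) = \mom^k({_\alpha}\EI(t)_N)$, which by Definition \ref{def:Eisenstein-Iwasawa} is precisely the left-hand side of the claimed formula.

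Going down and then across produces $N^k \cdot (\pr_k \circ \varrho_t)(t^{*}\comp^k({_\alpha}\EI(t)))$. To identify the inner term, I would invoke the $k$-th component of Corollary \ref{cor:pol-and-EI}, which gives $t^{*}\comp^k({_\alpha}\EI(t)) = t^{*}({_\alpha}\cpol_\Qp^k)$ in $H^{2d-1}(S, t^{*}\Log{k}_\Qp(d))$. Applying $\pr_k \circ \varrho_t$ to this is, by Definition \ref{def:Qp-Eisenstein-class}, exactly the construction of the $k$-th $\Qp$-Eisenstein class ${_\alpha}\Eis^k_\Qp(t)$. Therefore the down-then-across path evaluates to $N^k \cdot {_\alpha}\Eis^k_\Qp(t)$, matching the right-hand side of the theorem, and commutativity of the square closes the argument.

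At this point the theorem is essentially a bookkeeping exercise: the substantive content has been absorbed into Proposition \ref{prop:mom-and-varrho}, where the factor $N^k$ emerges naturally from $\varrho_t = [N]_\#^{-1} \circ [N]_\#$ acting by $N^k$ on $\Sym^k\sH_\Qp$, and into Proposition \ref{prop:comp-and-mom}, which identifies $\comp^k$ composed with projection with the moment map $\mom^k$ and thereby bridges $\Lambda(\sH)$ and $\Gamma_k(\sH)$. The only compatibility one has to be careful about is that the evaluation-at-$t$ map and the projection to the $k$-th symmetric component commute with $\comp$ and $\comp^k$; this is guaranteed by the sheaf-level functoriality of $\comp$ in the localization triangle \eqref{eq:comp-and-localization}, which is already what makes Proposition \ref{prop:mom-and-varrho} commute in the first place. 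So no genuinely new input is required beyond the two cited results, and the hardest part was pushed into establishing Proposition \ref{prop:mom-and-varrho}.
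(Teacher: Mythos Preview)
Your proposal is correct and follows exactly the route the paper takes: the paper's own proof is the single sentence ``This follows by combining Corollaries \ref{cor:pol-and-EI}, \ref{prop:mom-and-varrho} and the definition of the $\Qp$-Eisenstein class \ref{def:Qp-Eisenstein-class},'' and what you have written is precisely the diagram chase that unpacks this. No alternative strategy or additional input is involved on either side.
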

\begin{proof}
This follows by combining Corollaries \ref{cor:pol-and-EI},  \ref{prop:mom-and-varrho} and the definition
of the $\Qp$-Eisenstein class \ref{def:Qp-Eisenstein-class}.
\end{proof}
\begin{remark}
For comparison with \cite[Theorem 12.4.21]{Kings-Soule} we point out again that the normalization of ${_\alpha}\Eis_\Qp^{k}(t)$ in loc. cit. is \emph{different}. We had there a factor of $-N^{k-1}$ in front of the Eisenstein series.
\end{remark}
\bibliographystyle{amsalpha}
\bibliography{integral-polylog}
\end{document}